\renewcommand{\Bbb}{\mathbb}
\newtheorem{theorem}{Theorem}[section]
\newtheorem{corollary}{Corollary}[theorem]
\newcommand*{\rom}[1]{\expandafter\@slowromancap\romannumeral #1@}
\DeclareMathOperator{\Tr}{Tr}
\DeclareMathOperator{\Div}{div}
\DeclareMathOperator{\Image}{Im}
\newcommand{\LV}[1][]{\mathcal{L}_{#1}}
\newcommand{\Prj}{\mathcal{P}}
\newcommand{\PrjC}{\mathcal{Q}}
\newcommand{\PD}[1]{{{#1}_{\ast}}}   
\def\L{\mathcal{L}} 
\def\M{\mathcal{M}} 
\def\H{\mathcal{H}} 
\def\P{\mathcal{P}}
\def\Q{\mathcal{Q}}
\def\K{\mathcal{K}} 
\def\F{\mathcal{F}} 
\def\I{\mathcal{I}}
\def\Pi{\mathbf{P}}
\newcommand{\red}[1]{\textcolor{black}{#1}}
\begin{document}

\title{\red{On the estimation of the Mori-Zwanzig memory integral}}
\author{Yuanran Zhu, Jason M. Dominy and Daniele Venturi\thanks{Corresponding Author: venturi@ucsc.edu}\vspace{0.2cm}\\
\em Department of Applied Mathematics and Statistics\\ 
\em University of California, Santa Cruz}
\date{}
\maketitle
 
\begin{abstract}
We develop rigorous estimates and provably convergent 
approximations for the memory integral in the Mori-Zwanzig (MZ) 
formulation. The new theory is built upon rigorous mathematical 
foundations and is presented for both state-space and probability 
density function space formulations of the MZ equation. In particular,
we derive errors bounds and sufficient convergence conditions for 
short-memory approximations, the $t$-model, and hierarchical 
(finite-memory) approximations. \red{In addition, we derive computable upper bounds for the MZ memory integral, which allow us to estimate (a priori) the contribution of the MZ memory to the dynamics.} 
Numerical examples demonstrating convergence of the 
proposed algorithms are presented for linear and nonlinear dynamical 
systems evolving from random initial states.
\end{abstract}


\section{Introduction}
The Mori-Zwanzig (MZ) formulation is a technique from irreversible 
statistical mechanics that allows the development of formally exact 
evolution equations for quantities of interest such as macroscopic 
observables in high-dimensional dynamical systems
\cite{Zwanzig,Chorin2000OPMZ,VenturiBook,venturi2014convolutionless}. 
One of the main advantages of developing 
such exact equations is that they provide a 
theoretical starting point to avoid integrating 
the full (possibly high-dimensional) dynamical system 
and instead solve directly for the quantities of interest, 
thus reducing the computational cost significantly.
Computing the solution to the Mori-Zwanzig equation, 
however, is a challenging task that relies on approximations 
and appropriate numerical schemes.
One of the main difficulties lies in the approximation of the 
MZ memory integral (convolution term), which 
encodes the effects of the so-called orthogonal 
dynamics in the time evolution of the quantity of interest. 
The orthogonal dynamics is essentially a 
high-dimensional flow satisfying a complex 
integro-differential equation. 
Over the years, many techniques have been proposed 
for approximating the MZ memory integral, 
the most efficient ones being problem-dependent \cite{Snook,VenturiBook}. 
For example, in applications to statistical mechanics, 
Mori's continued fraction method \cite{mori1965continued,florencio1985exact} has been 
quite successful in determining exact solutions to several prototype 
problems, such as the dynamics of the auto-correlation function of a tagged 
oscillator in an harmonic chain 
\cite{espanol1996dissipative,kim2000dynamics}.
Other effective approaches to approximate 
the MZ memory integral rely on perturbation methods 
\cite{Breuer2001,venturi2014convolutionless,choventuri2014},  
mode coupling theories, \cite{alder1970decay,Snook}, 
and functional approximation methods \cite{VenturiPhysRep,Fox,Venturi_PRS,harp1970time, Moss1}. 
In a parallel effort, the applied mathematics 
community has, in recent years, attempted to derive general 
easy-to-compute representations of the memory integral. 
In particular, various approximations such as the $t$-model 
\cite{Chorin2000OPMZ,Chorin1,Stinis}, the modified $t$-model 
\cite{Chertock2008Modified} and, more recently, 
renormalized perturbation methods 
\cite{stinis2015renormalized} were 
proposed to address approximation of 
the memory integral in situations where there 
is no clear separation of scales between 
resolved and unresolved dynamics. \par

\red{The main objective of this paper is to develop rigorous estimates of the memory integral and provide convergence analysis of different 
approximation models of the Mori-Zwanzig equation, such as the short-memory 
approximation \cite{stinis2004stochastic},
the $t$-model \cite{Chorin1}, and hierarchical methods
\cite{stinis2007higher}. In particular, we study the MZ equation 
corresponding to two broad classes of projection operators: 
i) infinite-rank projections (e.g., Chorin's  projection \cite{Chorin2000OPMZ}) and ii) finite-rank projections (e.g., Mori's projection \cite{Mori}).}
We develop our analysis for both state-space and probability 
density function space formulations of the MZ equation. These 
two descriptions are connected by the same duality principle 
that pairs the Koopman and Frobenious-Perron 
operators \cite{Dominy2017}. \par
This paper is organized as follows. 
In section \ref{sec:MZ formulation},
we outline the general procedure
to derive the MZ equation in the phase space and discuss 
common choices of projection operators.
\red{In section \ref{sec:MZphasespace} 
we derive error bounds for the MZ memory integral 
and provide convergence analysis of different memory approximation methods, including the $t$-model \cite{Stinis,Chorin2000OPMZ,Chorin1}, the short-memory approximation  \cite{stinis2004stochastic}, and the hierarchical memory approximation technique \cite{stinis2007higher}. 
Such estimates are built upon 
semigroup estimation methods we present 
in Appendix \ref{sec:semigroupBoundsDecomposition}.}
In section \ref{sec:application} we present numerical 
examples demonstrating the accuracy of the memory 
approximation/estimation methods we develop 
throughout the paper. The main findings are summarized 
in section \ref{sec:summary}. 
Convergence analysis of the MZ memory 
term in the probability density function 
space formulation is presented in 
Appendix \ref{app:MZPDF}.

\section{The Mori-Zwanzig Formulation}
\label{sec:MZ formulation}
Consider the nonlinear dynamical system 
\begin{equation}
\frac{dx}{dt} = F(x),\qquad x(0)=x_0
\label{eqn:nonautonODE}
\end{equation}
evolving on a smooth manifold $\mathcal{S}$. 
For simplicity, let us assume that $\mathcal{S}=\mathbb{R}^n$.  We will consider the dynamics of scalar-valued observables $g:\mathcal{S}\to \mathbb{C}$, \red{and for concreteness, it will be desirable to identify structured spaces of such observable functions.  In \cite{Dominy2017}, it was argued that $C^{*}$-algebras of observables such as $L^{\infty}(\mathcal{S},\mathbb{C})$ (the space of all measureable, essentially bounded functions on $\mathcal{S}$) and $C_{0}(\mathcal{S},\mathbb{C})$ (the space of all continuous functions on $\mathcal{S}$, vanishing at infinity) make natural choices.  In what follows, we do not require the observables to comprise a $C^{*}$-algebra, but we will want them to comprise a Banach space as the estimation theorems of section \ref{sec:MZphasespace} make extensive use of the norm of this space.  Having the structure of a Banach space of observables also gives greater context to the meaning of the linear operators $\L$, $\K$, $\P$, and $\Q$ to be defined hereafter.}

The dynamics of any scalar-valued observable $g(x)$ (quantity of interest) 
can be expressed in terms of a semi-group $\K(t,s)$ of operators acting on the Banach space of observables.  This is the Koopman operator \cite{Koopman1931} which acts
 on the function $g$ as 
\begin{equation}
 g(x(t)) =\left[\K (t,s) g\right](x(s)), 
\end{equation}
where
\begin{equation}
\K (t,s)=e^{(t-s)\L},\qquad \L g(x)= F(x)\cdot \nabla g(x).
 \label{Koopman}
\end{equation}
 \red{Rather than compute the Koopman operator applicable to all observables, it is often more tractable to compute the evolution only of a (closed) subspace of quantities of interest.  This subspace can be described conveniently by means of a projection operator $\P$ with the subspace as its image.}  
Both $\P$ and the complementary projection $\Q=\I-\P$
act on the space of observables.
The nature, mathematical properties and connections 
between $\P$ and the observable $g$ are discussed 
in detail in \cite{Dominy2017}, \red{and summarized in section \ref{sec:projections}}.  
For now it suffices to assume  that $\P$ is 
a bounded linear operator, and that $\P^{2} = \P$. 
The MZ formalism describes the evolution of 
observables initially in the image of $\P$. 
Because the evolution of observables 
is governed by the semi-group $\K(t,s)$, 
we seek an evolution equation for $\K(t,s)\P$. By using 
the definition of the Koopman operator \eqref{Koopman}, and the 
well-known Dyson identity 
\begin{align*}
e^{t\L}=e^{t\Q\L}+
\int_0^t e^{s\L}\P\L e^{(t-s)\Q\L}ds
\end{align*}
we obtain \red{the operator equation
\begin{align}
\frac{d}{dt} e^{t\L} = e^{t\L}\P\L + 
e^{t\Q\L}\Q\L+ \int_0^t e^{s\L}\P\L e^{(t-s)\Q\L}\Q\L ds.
\label{MZKoop}
\end{align}
By applying this equation to an observable function $u_0$}, 
we obtain the well-known MZ equation in phase space
\begin{align}
\frac{\partial}{\partial t}e^{t\mathcal{L}}u_{0}
&=e^{t\mathcal{L}}\mathcal{PL}u_{0}
+e^{t\mathcal{QL}}\mathcal{QL}u_{0}+\int_0^te^{s\mathcal{L}}\mathcal{PL}
e^{(t-s)\mathcal{QL}}\mathcal{QL}u_{0} ds.
\label{MZstatespace}
\end{align}
Acting on the left with $\P$, we obtain the evolution equation for projected dynamics\footnote{Note that 
the second term in \eqref{MZstatespace}, i.e., 
$\mathcal{P}e^{t\mathcal{QL}}\mathcal{QL}x_0=0$ vanishes since 
$\P\Q=0$.}
\begin{align}\label{reduced order equation}
\frac{\partial}{\partial t}\mathcal{P}e^{t\mathcal{L}}u_{0}
=\mathcal{P}e^{t\mathcal{L}}\mathcal{PL}u_{0}
+\int_0^t\mathcal{P}e^{s\mathcal{L}}\mathcal{PL}
e^{(t-s)\mathcal{QL}}\mathcal{QL}u_{0} ds.
\end{align}

\subsection{Projection Operators}
\label{sec:projections}
\red{In this section, we make a summary on the commonly used projection operators $\P$ in the Mori-Zwanzig framework. To make our definition mathematically sound, we begin by assuming that the Liouville operator \eqref{Koopman} acts on observable functions in a $C^*$-algebra $\mathfrak{A}$, for instance $L^{\infty}(\M,\Sigma,\mu)$, where $\M$ is a space such as $\mathbb{R}^{N}$, $\Sigma$ is a $\sigma$-algebra on $\M$, and $\mu$ is a measure on $\Sigma$. Let $\sigma\in\mathfrak{A}_{*}$ be a positive linear functional on $\mathfrak{A}$. We define the weighted pre-inner product
\begin{align*}
\langle f,g \rangle_{\sigma}:=\sigma(f^*g).
\end{align*}
This can be used to define a Hilbert space $\H=L^2(\M,\sigma)$, which is the completion of the quotient space $$\H'=\{f\in\mathfrak{A}:\sigma(f^*f)<\infty\}/\{f\in\mathfrak{A}:\sigma(f^*f)=0\}$$ endowed with the inner product $\langle\cdot,\cdot \rangle_{\sigma}$. The $L^2$ norm induced by the inner product is denoted as $\|\cdot\|_{\sigma}$. In the rest of the paper, the positive linear functional $\sigma$ is always induced by a probability distribution $\tilde \sigma$ through 
\begin{align*}
\sigma(u)=\int_{\M} \tilde \sigma(\omega) u(\omega) d\omega,
\end{align*}
where $\tilde \sigma$ is typically chosen to be the probability density of the initial condition $\rho_0$, or the equilibrium distribution $\rho_{eq}$ in statistical physics. To conform to the literature, we also use notation $\langle\cdot,\cdot\rangle_{\rho_0}$, $\langle\cdot,\cdot\rangle_{\rho_{eq}} ,\langle\cdot,\cdot\rangle_{eq}$ to represent the weighted inner product corresponding to different probability measures $\tilde\sigma(\omega) d\omega$. With the Hilbert space determined, we now focus on the following two broad class of orthogonal projections on $\H$. }

\red{
\subsubsection{Infinite-Rank Projections} The first class of projection operators to consider in this setting are the conditional expectations $\P$ such that $\P_*\sigma=\sigma$. In this case, the properties of conditional expectations (in particular that $\P[\P(f)g\P(h)]=\P(f)\P(g)\P(h)$ \cite{Umegaki1954}) and the fact that $\P_{*}\sigma=\sigma$ imply that 
\begin{align*}
\langle\P f,g\rangle_{\sigma}&
=\sigma[(\P f)^*g]
=\P_*(\sigma)[(\P f)^*g]=\sigma[\P((\P f)^*g)]=\sigma[(\P f)^*(\P g)]\\
\langle f,\P g\rangle_{\sigma}&
=\sigma[f^*\P g]=\P_*(\sigma)[f^*\P g]=\sigma[\P(f^*\P g)]=\sigma[(\P f^*)(\P g)]=\sigma[(\P f)^*(\P g)]
\end{align*}
so that
\begin{align*}
\langle\P f,g\rangle_{\sigma}=\langle f,\P g\rangle_{\sigma}
\end{align*}
for all $f,g\in\H$.  It follows that
\begin{align*}
\langle \Q f,g\rangle_{\sigma}=\langle f,g\rangle_{\sigma}-\langle \P f,g\rangle_{\sigma}
=\langle f,g\rangle_{\sigma}-\langle  f,\P g\rangle_{\sigma}=\langle f,\Q g\rangle_{\sigma}.
\end{align*}
Therefore both $\P$ and $\Q$ are self-adjoint (i.e. orthogonal) projections onto closed subspaces of $\H$, hence contractions $\|\P\|_{\sigma}\leq 1$, $\|\Q\|_{\sigma}\leq 1$. Chorin's projection \cite{Chorin1,Chorin2000OPMZ} is one of this class, and is defined as 
\begin{align}
\big(\mathcal{P}g\big)(\hat {x}_{0})=\frac{\displaystyle \int_{-\infty}^{+\infty} g(\hat {x}(t;\hat{x}_0,\tilde{x}_0),\tilde {x}(t;\hat{x}_0,\tilde{x}_0))\rho_0(\hat {x}_0,\tilde {x}_0)d\tilde{x}_0}{\displaystyle \int_{-\infty}^{+\infty}\rho_0(\hat {x}_0,\tilde {x}_0)d\tilde{x}_0}
=
\Bbb{E}_{\rho_{0}}[g|\hat{x}_0].
\label{Chorin_projection}
\end{align} 
Here $x(t;x_0)=(\hat {x}(t;\hat{x}_0,\tilde{x}_0),\tilde {x}(t;\hat{x}_0,
\tilde{x}_0))$ is the flow map generated by \eqref{eqn:nonautonODE} 
split into resolved ($\hat 
{x}$) and unresoved ($\tilde{x}$) variables, and $g(x)=g(\hat {x},\tilde {x})$ is 
the quantity of interest. For Chorin's projection, the positive functional $\sigma$ defining the Hilbert space $\H$ may be taken to be integration with respect to the probability measure $\rho_0(\hat x_0,\tilde x_0)$. 
}
Clearly, if $x_0$ is deterministic then $\rho_0(\hat x_0,\tilde x_0)$ is a 
product of Dirac delta functions. 
On the other hand, if $\hat{x}(0)$ and 
$\tilde{x}(0)$ are statistically independent, i.e. 
$\rho_{0}(\hat{x}_0,\tilde{x}_0) = \hat{\rho}_{0}(\hat{x}_0)\tilde{\rho}_{0}(\tilde{x}_0)$, then the conditional expectation $\P$ simplifies to
\begin{align}
\big(\mathcal{P}u\big)(\hat {x}_0)=\int_{-\infty}^{+\infty}u(\hat {x}(t;\hat {x}_0,\tilde{x}_0),\tilde {x}(t;\hat {x}_0,\tilde{x}_0))\tilde{\rho}_0(\tilde {x}_0)d\tilde{x}_0.
\label{8}
\end{align} 
In the special case where $u(\hat {x},\tilde {x})= \hat {x}(t;\hat {x}_0,\tilde{x}_0)$ we have 
 \begin{align}
\big(\mathcal{P} \hat{x}\big)(\hat {x}_0)=\int_{-\infty}^{+\infty}\hat {x}(t;\hat {x}_0,\tilde{x}_0)\tilde{\rho}_0(\tilde {x}_0)d\tilde{x}_0,
\label{9}
\end{align} 
i.e., the conditional expectation of the resolved 
variables $\hat{x}(t)$ given the initial condition $\hat{x}_0$.  
This means that an integration of \eqref{9} 
with respect to $\hat{\rho}_0(\hat{x}_0)$ 
yields the mean of the resolved variables, i.e., 
\begin{equation}
\mathbb{E}_{\rho_0}[\hat{x}(t)] = \int_{-\infty }^\infty \big(\mathcal{P} \hat{x}\big)(\hat {x}_0)\hat{\rho}_0(\hat{x}_0)d\hat{x}_0 = \int_{-\infty }^\infty \hat{x}(t,x_{0})\rho_{0}(x_{0})dx_{0}.
\end{equation}
Obviously, if the resolved variables $\hat{x}(t)$ 
evolve from a deterministic initial state $\hat{x}_0$ then 
the conditional expectation \eqref{9} represents 
the the average of the reduced-order flow 
map $\hat{x}(t;\hat{x}_0,\tilde{x}_0)$ with respect to the 
PDF of $\tilde{x}_0$, i.e., the flow map
\begin{equation}
\P e^{t\L}\hat x(0)=X_0(t;\hat{x}_0)=\int_{-\infty}^{+\infty}\hat {x}(t;\hat {x}_0,\tilde{x}_0)\tilde{\rho}_0(\tilde {x}_0)d\tilde{x}_0.
\label{conditonal mean path}
\end{equation} 
In this case, the MZ equation \eqref{reduced order equation} is an exact (unclosed) evolution equation (PDE) for the multivariate 
field $X_0(t,\hat{x}_0)$. 
In order to close such an equation, a mean field approximation 
of the type $\P f(\hat{x})=f(\P\hat{x})$ was introduced by Chorin 
{\em et al.} in \cite{Chorin2000OPMZ,Chorin1,Chorin2000Hamiltonian}, together with the assumption that the probability 
distribution of $x_0$ is invariant under the flow 
generated by \eqref{eqn:nonautonODE}.

\red{
\subsubsection{Finite-Rank Projections} Another class of projections is 
defined by choosing a closed (typically finite-dimensional) linear 
subspace $V\subset \H=L^2(\M,\sigma)$ and letting $\P$ be the orthogonal projection onto $V$ in the $\sigma$ inner product. An example of such 
projection is Mori's projection \cite{zwanzig2001nonequilibrium}, widely used in statistical physics. For finite-dimensional $V$, given a linearly independent set $\{u_1,...,u_M\}\subset V$ that spans $V$, $\P$ can be defined by first constructing the 
positive definite Gram matrix $G_{ij}=\langle u_i,u_j\rangle_{\sigma}$. Then 
\begin{align}
\P f=\sum_{i,j=1}^M(G^{-1})_{ij}\langle u_i,f\rangle_{\sigma}u_j.
\label{MoriProjection}
\end{align}
This projection, is orthogonal with respect to the $L^2_{\sigma}$ inner 
product. In statistical physics, a common choice for the positive functional $\sigma$ that generates $\H$ is integration with respect to the Gibbs canonical distribution  $\rho_{eq}=e^{-\beta\H}/Z$, for the Hamiltonian $\H=\H(p, q)$ and the associated partition function $Z$. Here $q$ are generalized coordinates while $p$ are kinetic momenta.
}
 
\section{Analysis of the Memory Integral}
\label{sec:MZphasespace}

In this section, we develop a thorough mathematical 
analysis of the MZ memory integral 
\begin{align} \label{MemoryPhaseSpace}
\int_0^t\mathcal{P}e^{s\mathcal{L}}\mathcal{PL}
e^{(t-s)\mathcal{QL}}\mathcal{QL}u_{0}ds=\int_0^t\mathcal{P}e^{s\mathcal{L}}\mathcal{P}e^{(t-s)\mathcal{LQ}}\L\mathcal{QL}u_{0}ds.
 \end{align}
and its approximation. 
We begin by describing the behavior of the semigroup norms $\|e^{t\LV}\|$, $\|e^{t\Q\L\Q}\|$, and $\|e^{t\LV\PrjC}\|$ as functions of time, for different choices of projection $\Prj$ and different norms.  As we will see, the analysis will give clear computable bounds only in some circumstances, illustrating the difficulty of this problem and the need for further development and insight.

\red{
\subsection{Semigroup Estimates}
\label{sec:semigroup_estimation}
For any Liouville operator $\LV$ of the form \eqref{Koopman} acting on $\mathfrak{A} = L^{\infty}(\mathcal{M},\Sigma,\mu)$ and for any $\sigma$ identified with an element of $L^{1}(\mathcal{M},\Sigma,\mu)$, the functional $\PD{\LV}\sigma$ (assuming $\sigma$ lies in the domain of $\PD{\LV}$) is absolutely continuous with respect to $\sigma$ (essentially because $\LV$ acts locally) and the Radon-Nikodym derivative \cite{Gudder1979} may be identified with the negative of the divergence of the vector field ${F}$ with respect to the measure induced by $\sigma$, i.e., 
  \begin{align}
 	 \frac{d\PD{\LV}\sigma}{d\sigma} = -\Div_{\sigma}{F}, 
 	 \qquad \text{i.e.,} \qquad (\PD{\LV}\sigma)(u)  = -\sigma(u\Div_{\sigma}{F}), 
  \end{align}
where  
\begin{equation}
\Div_{\sigma}{F} = \frac{\nabla\cdot(\tilde{\sigma}(x)F(x))}{\tilde{\sigma}(x)}.
\end{equation}
When $\mathcal{M} = \mathbb{R}^{N}$ and $\sigma$ has the form $\sigma(u) = \int_{\mathbb{R}^{N}}\tilde{\sigma}({x})u({x})d{x}$, this can be shown more directly using integration by parts. By assuming that $\tilde{\sigma}(x)$ or $F_i$ decays to $0$ at $\infty$, we have 
 \begin{subequations}
 \begin{align}
 	\PD{\LV}\sigma(u) = \sigma(\LV u)&= \int_{\mathbb{R}^{N}}\tilde{\sigma}\sum_{i=1}^NF_{i}\frac{\partial u}{\partial x_{i}}d{x} = -\int_{\mathbb{R}^{N}}u\sum_{i=1}^N\frac{\partial}{\partial x_{i}}(\tilde{\sigma}F_{i})d{x}
 	 = -\int_{\mathbb{R}^{N}}u\left[\frac{1}{\tilde{\sigma}}\sum_{i=1}^N\frac{\partial}{\partial x_{i}}(\tilde{\sigma}F_{i})\right]\tilde{\sigma}d{x}\\
 	& = \sigma\left(u\left[-\frac{1}{\tilde{\sigma}}\sum_{i=1}^N\frac{\partial}{\partial x_{i}}(\tilde{\sigma}F_{i})\right]\right)
 	= \sigma\left(u\left[-\Div_{\sigma}{F}\right]\right)
\end{align}
\end{subequations}
}
\red{
from which we see
\begin{align*}
  	\Div_{\sigma}{F} = \frac{\nabla\cdot (\tilde{\sigma}{F})}{\tilde{\sigma}} = \frac{1}{\tilde{\sigma}}\sum_{i=1}^{N}\frac{\partial}{\partial x_{i}}(\tilde{\sigma}F_{i}) = \nabla \cdot {F} + {F}\cdot\nabla \left(\ln\tilde{\sigma}\right).
 \end{align*}
 Therefore,
 \begin{align*}
	 \langle v,-u\Div_{\sigma}{F}\rangle_{\sigma} = \sigma(-v^{*}u\Div_{\sigma}{F}) = (\PD{\LV}\sigma)(v^{*}u) = \sigma(\LV(v^{*}u)) = \sigma(\LV(v)^{*}u + v^{*}\LV(u)) = \langle \LV v, u\rangle_{\sigma} + \langle v, \LV u\rangle_{\sigma},
 \end{align*}
 and the following are equivalent: i) $\PD{\LV}\sigma = 0$ (i.e., $\sigma$ is invariant); ii) $\Div_{\sigma}{F} = 0$; iii) $\LV$ is skew-adjoint with respect to the $\sigma$ inner product. More generally, on $\mathcal{H} = L^{2}(\mathcal{M},\sigma)$, we find that 
\begin{align*}
	\LV + \LV^{\dag}  = -\Div_{\sigma}{F}
\end{align*}
so that the numerical abscissa $\omega$ \cite{Trefethen1997, Trefethen2005} (i.e., logarithmic norm \cite{Davies2005, Soederlind2006}) of $\LV$ is given by
\begin{align}\label{numerical_abscissa}
	\omega := \sup_{0\neq u\in\mathcal{H}}\frac{\Re\langle u, \LV u\rangle_{\sigma}}{\langle u, u\rangle_{\sigma}} = \sup_{0\neq u\in\mathcal{H}}\frac{\langle u, (\LV + \LV^{\dag}) u\rangle_{\sigma}}{2\langle u, u\rangle_{\sigma}} = \sup_{0\neq u\in\mathcal{H}}\frac{\langle u, - u\Div_{\sigma}{F}\rangle_{\sigma}}{2\langle u, u\rangle_{\sigma}} = -\frac{1}{2}\inf_{{x}}\Div_{\sigma}{F}({x}).
\end{align}
Using this numerical abscissa $\omega$, we obtain the following $L^2_{\sigma}$ estimation of the Koopman semigroup
\begin{align}
\|e^{t\LV}\|_{L^2_{\sigma}} \leq e^{\omega t}
\label{koopmanEstimate}
\end{align}
and moreover, $e^{\omega t}$ is the smallest exponential function that bounds $\|e^{t\LV}\|_{\sigma}$ \cite{Davies2005}.  When $\Prj$  and 
$\PrjC = \I - \Prj$ are orthogonal projections on $L^{2}(\mathcal{M},\sigma)$, we can observe that the numerical abscissa of $\PrjC\LV\PrjC$ is bounded by that of $\LV$. In fact, 
\begin{align*}
	\sup_{0\neq u\in\mathcal{H}}\frac{\Re\langle u, \PrjC\LV\PrjC u\rangle_{\sigma}}{\langle u, u\rangle_{\sigma}} = \sup_{0\neq u\in\mathcal{H}}\frac{\Re\langle \PrjC u, \LV\PrjC u\rangle_{\sigma}}{\langle u, u\rangle_{\sigma}} = \sup_{0\neq u\in\Image{\PrjC}}\frac{\Re\langle u, \LV u\rangle_{\sigma}}{\langle u, u\rangle_{\sigma}} \leq \sup_{0\neq u\in\mathcal{H}}\frac{\Re\langle u, \LV u\rangle_{\sigma}}{\langle u, u\rangle_{\sigma}} = \omega,
\end{align*}
(see equation \eqref{numerical_abscissa}) so that 
\begin{align}\label{orth_semigroup_bound}
	\|e^{t\PrjC\LV\PrjC}\|_{L^2_{\sigma}} \leq e^{\omega t}.
\end{align}
It should be noticed that this bound for the orthogonal semigroup is not necessarily tight. The tightness of the bound depends on the choice of projection $\P$ and comes down to whether functions in the image of $\Q$ can be chosen localized to regions where $\Div_\sigma F$ is close to 
its infimal value.}\par
\red{When estimating the MZ memory integral, we need to deal with the semigroup $e^{t\L\Q}$.  It turns out to be extremely difficult to prove strong continuity of such semigroup in general, due to the unboundedness of $\L\Q$. 
It is shown in Appendix \ref{sec:semigroupBoundsDecomposition} that, when $\Prj\LV\PrjC$ is an unbounded operator, as is typical when $\Prj$ is a conditional expectation, the semigroup $e^{t\LV\PrjC}$ can only be bounded as
\begin{align}
	\|e^{t\LV\PrjC}\|_{{\sigma}} \leq M_{\Q}e^{t\omega_{\Q}}
\end{align}
for some $M_{\Q} > 1$, due to the fact that $\|e^{t\L\Q}\|_{\sigma}$ has infinite slope at $t =0$.  More work is needed to obtain satisfactory, computable values for $M_{\PrjC}$ and $\omega_{\PrjC}$, in the case where  $\Prj$ and $\PrjC$ are infinite-rank projections.  It is also shown that, when either $\Prj\LV\PrjC$ or $\LV\Prj$ is bounded, for example when $\Prj$ is a finite-rank projection, we can get computable semigroup bounds of the form
\begin{subequations}
\begin{align}
	\|e^{t\LV\PrjC}\|_{\sigma} & \leq {e^{\omega_{\PrjC}t}} \leq e^{\frac{1}{2}\left(\sqrt{\omega^{2} + \|\Prj\LV\PrjC \|_{\sigma}^{2}} + \omega\right)t},\\
	\|e^{t\LV\PrjC}\|_{\sigma} & \leq {e^{\omega_{\PrjC}t}} \leq e^{(\omega + \|\LV\Prj\|_{\sigma})t},	
\end{align}
\end{subequations}
where $\omega = -\inf\Div_{\sigma}{F}$ if we use the $L^2_{\sigma}$ estimation of $e^{t\L}$.
}

\subsection{Memory Growth}
We begin by seeking to bound the MZ 
memory integral  \eqref{MemoryPDFSpace}
as a whole and build our analysis from there.
A key assumption of our analysis is that the semigroup 
$e^{t\mathcal{LQ}}$ is strongly continuous\footnote{
As is well known, $e^{t\mathcal{L}}$ (Koopman operator) 
is typically strongly continuous \cite{engel1999one}. 
However, no such result exists for $e^{t\mathcal{LQ}}$.}, 
i.e., the map $t\mapsto e^{t\mathcal{LQ}}g$ 
is continuous in the norm topology on the space 
of observables for each fixed  $g$ \cite{engel1999one}.
However, as we pointed out in section \ref{sec:semigroup_estimation}, it is a difficult task both to prove strong continuity of $e^{t\mathcal{LQ}}$ and to obtain a computable upper bound for unbounded generators of the form $\L\Q$, we leave this as an open problem and assume that there exist constants $M_{\Q}$ and $\omega_{\Q}$ such that $\|e^{t\L\Q}\|\leq M_{\Q}e^{t\omega_{\Q}}$.
Throughout this section, $\|\cdot\|$ denotes a general Banach norm. We begin with the following simple estimate:

\begin{theorem}
\label{memory_theorem}
{\bf (Memory growth)}
Let $e^{t\mathcal{L}}$ and $e^{t\mathcal{LQ}}$ be 
strongly continuous semigroups with upper bounds $\|e^{t\L}\|\leq Me^{t\omega}$ and $\|e^{t\L\Q}\|\leq M_{{\Q}}e^{t\omega_{\Q}}$. Then 
\begin{align}\label{memory_estimation}
\left\|\int_0^t\mathcal{P}e^{s\mathcal{L}}\mathcal{PL}
e^{(t-s)\mathcal{QL}}\mathcal{QL}u_{0}ds\right\|\leq M_0(t),
\end{align}
where 
\begin{align}
	M_0(t)=
	\begin{cases}
	\displaystyle C_1te^{t\omega_{\Q}},\quad &\omega=\omega_{\Q}\\
\displaystyle \frac{C_1}{\omega-\omega_{\Q}}[e^{t\omega}-e^{t\omega_{\Q}}],\quad &\omega\neq\omega_{\Q}
	\end{cases}
\end{align} 
and $C_{1} = MM_{\Q}\|\L\Q\L u_{0}\|$ is a 
constant. Clearly, $\displaystyle \lim_{t\rightarrow 0} M_0(t)=0$.
\end{theorem}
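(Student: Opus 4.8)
The plan is to start from the operator rewriting already recorded in the second equality of \eqref{MemoryPhaseSpace}, which rests on the intertwining identity $\mathcal{L}e^{(t-s)\mathcal{QL}} = e^{(t-s)\mathcal{LQ}}\mathcal{L}$ (legitimate as a formal power series, hence on a suitable common core). This turns the integrand into
\[
\mathcal{P}e^{s\mathcal{L}}\mathcal{PL}e^{(t-s)\mathcal{QL}}\mathcal{QL}u_{0} = \mathcal{P}e^{s\mathcal{L}}\mathcal{P}e^{(t-s)\mathcal{LQ}}\mathcal{LQL}u_{0}.
\]
The decisive feature of the right-hand form is that the single unbounded operator $\mathcal{L}$ has been pulled all the way to the right and applied directly to the fixed data $u_{0}$, producing the \emph{constant vector} $\mathcal{LQL}u_{0}$. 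Consequently I never need to estimate the unbounded composition $\mathcal{PL}e^{(t-s)\mathcal{QL}}$ (recall $\mathcal{PL}$ is typically unbounded for infinite-rank $\mathcal{P}$); everything to the left of $\mathcal{LQL}u_{0}$ is a composition of the bounded projection $\mathcal{P}$ with the two strongly continuous semigroups whose growth bounds are supplied by the hypotheses.

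With this form in hand, I would apply the triangle inequality to move the norm inside the integral, and then use submultiplicativity together with $\|\mathcal{P}\|\leq 1$ (the orthogonal projections of section \ref{sec:projections} are contractions in the $L^2_{\sigma}$ norm; in a general Banach norm the two factors of $\|\mathcal{P}\|$ are simply absorbed into the constant) and the two assumed estimates $\|e^{s\mathcal{L}}\|\leq Me^{s\omega}$, $\|e^{(t-s)\mathcal{LQ}}\|\leq M_{\mathcal{Q}}e^{(t-s)\omega_{\mathcal{Q}}}$. This gives the pointwise majorant
\[
\left\|\mathcal{P}e^{s\mathcal{L}}\mathcal{P}e^{(t-s)\mathcal{LQ}}\mathcal{LQL}u_{0}\right\| \leq MM_{\mathcal{Q}}\,\|\mathcal{LQL}u_{0}\|\,e^{s\omega}e^{(t-s)\omega_{\mathcal{Q}}} = C_{1}\,e^{s\omega}e^{(t-s)\omega_{\mathcal{Q}}},
\]
with $C_{1}=MM_{\mathcal{Q}}\|\mathcal{LQL}u_{0}\|$ exactly as claimed.

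It then remains to integrate this scalar bound, which is the convolution of two exponentials. Factoring out $e^{t\omega_{\mathcal{Q}}}$ gives $C_{1}e^{t\omega_{\mathcal{Q}}}\int_{0}^{t}e^{s(\omega-\omega_{\mathcal{Q}})}\,ds$, and the elementary integral splits into the two announced cases: when $\omega=\omega_{\mathcal{Q}}$ the integral equals $t$, producing $C_{1}te^{t\omega_{\mathcal{Q}}}$; when $\omega\neq\omega_{\mathcal{Q}}$ it equals $(e^{t(\omega-\omega_{\mathcal{Q}})}-1)/(\omega-\omega_{\mathcal{Q}})$, which after multiplication by $C_{1}e^{t\omega_{\mathcal{Q}}}$ yields $\frac{C_{1}}{\omega-\omega_{\mathcal{Q}}}[e^{t\omega}-e^{t\omega_{\mathcal{Q}}}]$. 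The concluding claim $\lim_{t\to 0}M_{0}(t)=0$ then follows by inspection, since $te^{t\omega_{\mathcal{Q}}}\to 0$ in the first case and $e^{t\omega}-e^{t\omega_{\mathcal{Q}}}\to 0$ in the second.

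I expect no real obstacle in the estimate itself: once the growth bounds on both semigroups are granted, the argument is a standard convolution estimate. The genuinely delicate points are structural rather than computational, and both are effectively deferred by the hypotheses. First, the intertwining rewriting must be justified at the level of unbounded generators, which requires $u_{0}$ to lie in a domain on which $\mathcal{LQL}u_{0}$ is defined and the power-series manipulation is valid. Second, and more seriously, the standing assumption $\|e^{t\mathcal{LQ}}\|\leq M_{\mathcal{Q}}e^{t\omega_{\mathcal{Q}}}$ (together with strong continuity of $e^{t\mathcal{LQ}}$ and the computability of $M_{\mathcal{Q}},\omega_{\mathcal{Q}}$) is exactly the open problem flagged in section \ref{sec:semigroup_estimation}. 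Thus the hard part resides entirely in the premises; granting them, Theorem \ref{memory_theorem} is an immediate consequence.
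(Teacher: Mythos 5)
Your proposal is correct and follows essentially the same route as the paper's own proof: the same intertwining rewrite $\mathcal{PL}e^{(t-s)\mathcal{QL}}\mathcal{QL}u_0 = \mathcal{P}e^{(t-s)\mathcal{LQ}}\mathcal{LQL}u_0$, the same triangle-inequality/semigroup-bound estimate, and the same two-case evaluation of the exponential convolution integral. If anything, your handling of the projection norms is cleaner than the paper's, which absorbs $\|\mathcal{P}\|^2$ into $C_1$ in the proof while omitting it from the constant stated in the theorem.
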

\begin{proof}
We first rewrite the memory integral in the 
equivalent form
\begin{align*}
\int_0^t\mathcal{P}e^{s\mathcal{L}}\mathcal{PL}
e^{(t-s)\mathcal{QL}}\mathcal{QL}u_{0}ds
=\int_0^t\mathcal{P}e^{s\mathcal{L}}\mathcal{P}
e^{(t-s)\mathcal{LQ}}\L\mathcal{QL}u_{0}ds.
\end{align*}
Since $e^{t\L}$ and $e^{t\L\Q}$ are assumed to be strongly continuous semigroups, we have the upper bounds $\|e^{t\L}\|\leq Me^{t\omega}$, $\|e^{t\L\Q}\|\leq M_{\Q}e^{t\omega_{\Q}}$. Therefore 
\begin{align*}
\bigg\|\int_0^t\mathcal{P}e^{s\mathcal{L}}\mathcal{P}
e^{(t-s)\mathcal{LQ}}\L\mathcal{QL}u_{0}ds\bigg\|
&\leq
\int_0^t\|e^{s\mathcal{L}}\mathcal{P}
e^{(t-s)\mathcal{LQ}}\L\mathcal{QL}u_{0}\|ds\\
&\leq
MM_{\Q}\|\mathcal{LQL}u_0\|
\int_0^te^{s(\omega-\omega_{\Q})}ds\\
&=
\begin{cases}
\displaystyle C_1te^{t\omega_{\Q}},\quad &\omega=\omega_{\Q}\\
\displaystyle \frac{C_1}{\omega-\omega_{\Q}}[e^{t\omega}-e^{t\omega_{\Q}}],\quad &\omega\neq\omega_{\Q}
\end{cases}
\end{align*}
where $C_1=MM_{\Q}\|\P\|^2\|\mathcal{LQL}u_0\|$.
\end{proof}
\red{
Theorem \ref{memory_theorem} provides an upper bound for the growth of the memory integral based 
on the assumption that $e^{t\mathcal{L}}$ and $e^{t\mathcal{LQ}}$ are strongly continuous semigroups. We emphasize that only for simple cases can such upper bounds can be computed analytically (we will compute one of the cases later in section \ref{sec:application}), because of the fundamental difficulties in computing the upper bound of $e^{t\L\Q}$. However, it will be shown later that, although the specific expression for $M_0(t)$ is unknown, the {\em form} of it is already useful as it enables us to derive some verifiable theoretical predictions for general nonlinear systems.}

\subsection{Short Memory Approximation and the \texorpdfstring{$t$}{t}-model}
Theorem \ref{memory_theorem} can be employed to obtain 
upper bounds for well-known approximations of the memory integral. 
Let us begin with the $t$-model proposed in \cite{Chorin1}. This model 
relies on the approximation 
\begin{align}\label{t_model_operator}
\int_0^t\mathcal{P}e^{s\mathcal{L}}\mathcal{PL}
e^{(t-s)\mathcal{QL}}\mathcal{QL}u_{0}ds\simeq te^{t\mathcal{L}}\mathcal{PLQL}u_0
\qquad \textrm{($t$-model).} 
\end{align}

\begin{theorem}\label{t-model_estimation}
{\bf (Memory approximation via the $t$-model \cite{Chorin1})}
Let $e^{t\mathcal{L}}$ and $e^{t\mathcal{LQ}}$ be strongly 
continuous semigroups with upper bounds $\|e^{t\L}\|\leq Me^{t\omega}$ and $\|e^{t\L\Q}\|\leq M_{{\Q}}e^{t\omega_{\Q}}$. Then
\begin{align*}
\bigg\|\int_0^t \mathcal{P}e^{s\mathcal{L}}\mathcal{PL}e^{(t-s)\mathcal{LQ}}
\mathcal{LQL}u_0ds - t\mathcal{P}e^{t\L}\L\Q\L u_0\bigg\|
\leq M_1(t),
\end{align*}
where 
\begin{align*}
M_{1}(t) = 
\begin{cases}
\displaystyle C_{1}\left(\frac{e^{t\omega_{\Q}} - e^{t\omega}}{\omega_{\Q}-\omega} + \frac{te^{t\omega}}{M_{\Q}}\right) & \omega \neq \omega_{\Q}\\
 \displaystyle  C_{1}\frac{M_{\Q}+1}{M_{\Q}}te^{t\omega} & \omega = \omega_{\Q}\end{cases}, 
\end{align*}
and $C_{1} = MM_{\Q}\|\P\|^2\|\L\Q\L u_{0}\|$.
\end{theorem}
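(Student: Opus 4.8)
The plan is to estimate the approximation error by the triangle inequality, controlling the genuine memory integral and the $t$-model term \emph{separately}. The decisive observation is that the stated bound is exactly the sum of two pieces already available to us: $M_1(t)$ equals the Theorem~\ref{memory_theorem} bound $M_0(t)$ on the memory integral plus a direct norm estimate of the $t$-model term. Hence no cancellation between the two needs to be exploited, and the whole argument reduces to assembling two elementary bounds with matching constants.

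Concretely, after rewriting the memory kernel in the intertwined form $\mathcal{PL}e^{(t-s)\Q\L}\Q\L = \mathcal{P}e^{(t-s)\L\Q}\L\Q\L$ used throughout the paper, I would write
\[
\bigg\|\int_0^t\mathcal{P}e^{s\L}\mathcal{P}e^{(t-s)\L\Q}\L\Q\L u_0\,ds - t\mathcal{P}e^{t\L}\mathcal{P}\L\Q\L u_0\bigg\|\leq\bigg\|\int_0^t\mathcal{P}e^{s\L}\mathcal{P}e^{(t-s)\L\Q}\L\Q\L u_0\,ds\bigg\| + \big\|t\mathcal{P}e^{t\L}\mathcal{P}\L\Q\L u_0\big\|.
\]
The first term on the right is bounded directly by Theorem~\ref{memory_theorem}, yielding $M_0(t)=C_1(e^{t\omega_\Q}-e^{t\omega})/(\omega_\Q-\omega)$ when $\omega\neq\omega_\Q$ and $M_0(t)=C_1 te^{t\omega}$ when $\omega=\omega_\Q$, with $C_1=MM_\Q\|\mathcal{P}\|^2\|\L\Q\L u_0\|$. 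For the second term I would apply submultiplicativity of the operator norm together with the Koopman bound $\|e^{t\L}\|\leq Me^{t\omega}$ and the two factors of $\|\mathcal{P}\|$ (the outermost projection and the projection inherited from the $\mathcal{PL}$ in the memory kernel), obtaining $\|t\mathcal{P}e^{t\L}\mathcal{P}\L\Q\L u_0\|\leq t\|\mathcal{P}\|^2 Me^{t\omega}\|\L\Q\L u_0\| = C_1 te^{t\omega}/M_\Q$. Adding the two estimates reproduces $M_1(t)$ in both cases.

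There is no substantive obstacle once Theorem~\ref{memory_theorem} is in hand; the only care required is bookkeeping. I must account for the two factors of $\|\mathcal{P}\|$ in the $t$-model term so that the constant $C_1$ appears correctly (the frozen integrand retains the projection standing in front of $\L\Q\L$ in addition to the leftmost $\mathcal{P}$), and I must keep the case split $\omega=\omega_\Q$ versus $\omega\neq\omega_\Q$ aligned with the definition of $M_0(t)$, recalling that $(e^{t\omega_\Q}-e^{t\omega})/(\omega_\Q-\omega)=(e^{t\omega}-e^{t\omega_\Q})/(\omega-\omega_\Q)$ so the signs are consistent. Finally I would remark that both contributions vanish as $t\to0$ (the memory piece by Theorem~\ref{memory_theorem}, the $t$-model piece because of its explicit prefactor $t$), so $M_1(t)\to0$ and the $t$-model is exact in the $t\to0$ limit.
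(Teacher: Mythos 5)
Your proposal is correct and follows essentially the same route as the paper: the paper's proof also applies the triangle inequality to separate the memory integral from the $t$-model term, bounds the first piece by exactly the estimate of Theorem~\ref{memory_theorem} (re-derived inline rather than cited) and the second by submultiplicativity as $t\|\P\|^2Me^{t\omega}\|\L\Q\L u_0\| = C_1te^{t\omega}/M_{\Q}$, and adds the two. Your bookkeeping of the two factors of $\|\P\|$ and of the case split $\omega=\omega_{\Q}$ versus $\omega\neq\omega_{\Q}$ matches the paper's computation exactly.
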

\begin{proof}
By applying the triangle inequality, we obtain that 
\begin{align*}
\left\|\int_{0}^{t} \P e^{s\L}\P e^{(t-s)\L\Q}\L\Q\L u_{0}ds - t\P e^{t\L}\P \L\Q\L u_{0}\right\|
  & \leq \left(\int_{0}^{t}\|\P\|\left\|e^{s\L}\right\|\left\|\P\right\|\left\|e^{(t-s)\L\Q}\right\|ds + t\|\P\|\left\|e^{t\L}\right\|\left\|\P\right\|\right)\left\|\L\Q\L u_{0}\right\|\\
  & \leq \|\P\|^2\left\|\L\Q\L u_{0}\right\|\left(MM_{\Q}\int_{0}^{t}e^{s\omega}e^{(t-s)\omega_{\Q}}ds + tMe^{t\omega}\right)\\
  & = C_{1}e^{t\omega}\left(\int_{0}^{t}e^{s(\omega_{\Q}-\omega)}ds + \frac{t}{M_{\Q}}\right)\\
  &= \begin{cases}
  \displaystyle 
 C_{1}\left(\frac{e^{t\omega_{\Q}} - e^{t\omega}}{\omega_{\Q}-\omega} + \frac{te^{t\omega}}{M_{\Q}}\right) & \omega \neq \omega_{\Q}\\
  \displaystyle  C_{1}\frac{M_{\Q}+1}{M_{\Q}}te^{t\omega} & \omega = \omega_{\Q}
  \end{cases}
\end{align*}
where  $C_1=MM_{\Q}\|\P\|^2\|\mathcal{LQL}u_0\|$.
\end{proof}

\noindent
Theorem \ref{t-model_estimation} provides an upper bound for 
the error associated with the $t$-model. The limit 
\begin{equation}
 \lim_{t\rightarrow0}M_1(t)=0,
\end{equation}
guarantees the convergence of the $t$-model for short integration times. 
On the other hand, depending on the semigroup 
constants $M$, $\omega$, $M_\Q$ and $\omega_\Q$ (which may be estimated numerically), the error of the $t$-model may 
remain small for longer integration times (see the numerical results
in section \ref{sec:Lor63})
Next, we study the short-memory approximation proposed in \cite{stinis2004stochastic}. The main idea is to replace 
the integration interval $[0,t]$ in 
\eqref{MemoryPhaseSpace} by a shorter time 
interval $[t-\Delta t,t]$, i.e. 
\begin{align*}
\int_0^t\mathcal{P}e^{s\mathcal{L}}\mathcal{PL}e^{(t-s)\mathcal{QL}}
\mathcal{QL}u_{0}ds\simeq \int_{t-\Delta t}^t\mathcal{P}e^{s\mathcal{L}}\mathcal{PL}
e^{(t-s)\mathcal{QL}}\mathcal{QL}u_{0}ds\qquad \textrm{(short-memory approximation),}
\end{align*} 
where $\Delta t\in[0,t]$ identifies the effective {\em memory length}.
The following result  provides an upper bound to the error associated
with  the short-memory approximation.
\begin{theorem}
\label{short_memory_approximation}
{\bf (Short memory approximation \cite{stinis2004stochastic})}
Let $e^{t\L}$ and $e^{t\mathcal{LQ}}$ be strongly continuous 
semigroups with upper bounds  $\|e^{t\L}\|\leq Me^{t\omega}$ and 
$\|e^{t\L\Q}\|\leq M_{{\Q}}e^{t\omega_{\Q}}$.
Then the following error estimate holds true
\begin{align*}
\bigg\|\int_0^t\mathcal{P}e^{s\mathcal{L}}\mathcal{PL}
e^{(t-s)\mathcal{QL}}\mathcal{QL}u_{0}ds- \int_{t-\Delta t}^t\mathcal{P}e^{s\mathcal{L}}
\mathcal{PL}e^{(t-s)\mathcal{QL}}\mathcal{QL}u_{0}ds\bigg\|\leq M_2(t-\Delta t,t),
\end{align*}
where 
\begin{align*}
M_2(\Delta t,t)=
\begin{dcases}
C_1(t-\Delta t)e^{t\omega_{\Q}} \quad &\omega=\omega_{\Q}\\
C_1e^{\Delta t\omega_{\Q}}\frac{e^{(t-\Delta t)\omega}-e^{(t-\Delta t)\omega_{\Q}}}{\omega-\omega_{\Q}}
 \quad &\omega\neq\omega_{\Q}
\end{dcases}
\end{align*}
and $C_{1} = MM_{\Q}\|\P\|^2\|\L\Q\L u_{0}\|$.
\end{theorem}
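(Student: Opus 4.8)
The plan is to recognize that the quantity to be bounded is simply the portion of the memory integral supported on the discarded subinterval: since the short-memory approximation retains only $\int_{t-\Delta t}^{t}$, the difference appearing on the left-hand side is exactly the tail
\begin{align*}
\int_0^{t-\Delta t}\P e^{s\L}\P\L e^{(t-s)\Q\L}\Q\L u_0\,ds.
\end{align*}
I would first rewrite this using the commutation identity $\P\L e^{(t-s)\Q\L}=\P e^{(t-s)\L\Q}\L$ already exploited in \eqref{MemoryPhaseSpace}, turning the integrand into $\P e^{s\L}\P e^{(t-s)\L\Q}\L\Q\L u_0$. This places the estimate squarely in the setting of Theorem \ref{memory_theorem}; the only difference is that the range of integration is $[0,t-\Delta t]$ rather than $[0,t]$.

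Next I would pull the Banach norm inside the integral by the triangle inequality, factor out the two projection norms $\|\P\|^2$ and the fixed scalar $\|\L\Q\L u_0\|$, and insert the assumed semigroup bounds $\|e^{s\L}\|\le Me^{s\omega}$ and $\|e^{(t-s)\L\Q}\|\le M_{\Q} e^{(t-s)\omega_{\Q}}$. Collecting the constants into $C_1=MM_{\Q}\|\P\|^2\|\L\Q\L u_0\|$ and extracting the $s$-independent factor $e^{t\omega_{\Q}}$ reduces everything to the elementary integral
\begin{align*}
C_1 e^{t\omega_{\Q}}\int_0^{t-\Delta t}e^{s(\omega-\omega_{\Q})}\,ds,
\end{align*}
which I would evaluate in the two regimes. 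When $\omega=\omega_{\Q}$ the integrand is identically $1$, giving $C_1(t-\Delta t)e^{t\omega_{\Q}}$, the first branch of $M_2$. When $\omega\ne\omega_{\Q}$ the integral equals $[e^{(t-\Delta t)(\omega-\omega_{\Q})}-1]/(\omega-\omega_{\Q})$.

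The one genuinely fiddly step, and the part I expect to require the most care although it is purely algebraic, is matching this last expression to the stated closed form of $M_2$. I would split the exponents as $e^{t\omega_{\Q}}e^{(t-\Delta t)(\omega-\omega_{\Q})}=e^{(t-\Delta t)\omega+\Delta t\,\omega_{\Q}}$ and $e^{t\omega_{\Q}}=e^{\Delta t\,\omega_{\Q}}e^{(t-\Delta t)\omega_{\Q}}$, so that factoring $e^{\Delta t\,\omega_{\Q}}$ out of the numerator yields $C_1 e^{\Delta t\,\omega_{\Q}}[e^{(t-\Delta t)\omega}-e^{(t-\Delta t)\omega_{\Q}}]/(\omega-\omega_{\Q})$, which is precisely the second branch of $M_2$. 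A useful bookkeeping check throughout is that the discarded interval $[0,t-\Delta t]$ has length $t-\Delta t$ while the retained memory window $[t-\Delta t,t]$ has length $\Delta t$; in particular, sending $\Delta t\to t$ shrinks the discarded interval to a point and drives the bound to $0$, recovering the exact memory, while sending $\Delta t\to 0$ reproduces exactly the full-memory estimate $M_0(t)$ of Theorem \ref{memory_theorem}. These two limits pin down the correct placement of $\Delta t$ versus $t-\Delta t$ in the arguments of $M_2$ and confirm that the error incurred by dropping $[0,t-\Delta t]$ is controlled by $M_2$ evaluated at memory length $\Delta t$.
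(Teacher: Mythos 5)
Your proof is correct and is exactly the argument the paper intends: the paper omits the proof of Theorem \ref{short_memory_approximation} on the grounds that it mirrors Theorem \ref{memory_theorem}, and your reduction of the error to the tail integral over $[0,t-\Delta t]$, followed by the commutation identity, the triangle inequality, the assumed semigroup bounds, and evaluation of $\int_0^{t-\Delta t}e^{s(\omega-\omega_{\Q})}ds$, is precisely that omitted argument. Your final algebraic rearrangement reproduces the stated form of $M_2(\Delta t,t)$, and your two limiting checks ($\Delta t\to t$ giving $0$, and $\Delta t\to 0$ recovering $M_0(t)$) match the paper's own remark following the theorem.
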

\noindent
We omit the proof due to its similarity to that of Theorem \ref{memory_theorem}.  Note that $\displaystyle\lim_{\Delta t\rightarrow t}M_2(\Delta t,t)=0$ for all finite $t>0$. 

\subsection{Hierarchical Memory Approximation}
\label{sec:hierarchical}
An alternative way to approximate the memory 
integral \eqref{MemoryPhaseSpace} was proposed 
by Stinis in \cite{stinis2007higher}. The key idea is to 
repeatedly differentiate \eqref{MemoryPhaseSpace} 
with respect to time, and establish a hierarchy of PDEs which 
can eventually be truncated or approximated at some level to 
provide an approximation of the memory. 
In this section, we derive this hierarchy of memory equations 
and perform a thorough theoretical analysis to establish 
accuracy and convergence of the method. 
To this end, let us first define
\begin{align}
w_0(t)=\int_0^t\mathcal{P}e^{s\mathcal{L}}\mathcal{PL}
e^{(t-s)\mathcal{QL}}\mathcal{QL}u_{0}ds
\label{w0}
\end{align}
to be the memory integral \eqref{MemoryPhaseSpace}. 
By differentiating $w_0(t)$ with respect to time 
we obtain\footnote{Here we are implicitly assuming that 
$w_0(t)$ is differentiable with respect to time. 
For the hierarchical approach to the finite memory 
approximation to be applicable, we must assume 
that $w_0(t)$ is differentiable with respect to time as 
many times as needed.}
\begin{equation*}
\frac{dw_0(t)}{dt}=
\mathcal{P}e^{t\mathcal{L}}\mathcal{PLQL}u_0+w_1(t),
\end{equation*}
where 
\begin{align*}
w_1(t)=\int_0^t\mathcal{P}e^{s\mathcal{L}}
\mathcal{PL}e^{(t-s)\mathcal{QL}}\mathcal{(QL)}^2u_0ds.
\end{align*}
By iterating this procedure $n$ times we obtain 
\begin{align}
\frac{dw_{n-1}(t)}{dt}=
\mathcal{P}e^{t\mathcal{L}}\mathcal{PL}(\mathcal{QL})^{n-1}u_0
+w_n(t),
\label{d1wn}
\end{align}
where
\begin{align}
w_n(t)=\int_0^t\mathcal{P}e^{s\mathcal{L}}
\mathcal{PL}e^{(t-s)\mathcal{QL}}(\mathcal{QL})^{n+1}u_0ds.
\label{wn}
\end{align}
The hierarchy of equations
\eqref{d1wn}-\eqref{wn} is equivalent to the following 
infinite-dimensional system of PDEs 
\begin{equation}
\left\{
\begin{array}{lcl}
\displaystyle \frac{dw_0(t)}{dt} &=&
\mathcal{P}e^{t\mathcal{L}}\mathcal{PLQL}u_0+w_1(t)\\
\displaystyle\frac{dw_1(t)}{dt} &=&
\mathcal{P}e^{t\mathcal{L}}\mathcal{PLQLQL}u_0+w_2(t)\\
&\vdots&\\
\displaystyle\frac{dw_{n-1}(t)}{dt} &=&
\mathcal{P}e^{t\mathcal{L}}\mathcal{PL}(\Q\L)^{n}u_0+w_n(t)\\
&\vdots&
\end{array}\right.
\label{hier_equation}
\end{equation}
evolving from the initial condition $w_i(0)=0$, $i=1,2,\dots$ 
(see equation \eqref{wn}). With such initial condition available, 
we can solve \eqref{hier_equation} with backward substitution, i.e., 
from the last equation to the first one, to 
obtain the following (exact) {\em Dyson series representation} 
of the memory integral \eqref{w0}
\begin{align}
\label{wn(t)}
w_0(t)=&\int_0^t\mathcal{P}
e^{s\mathcal{L}}\mathcal{PLQL}u_0ds
+
\int_0^t\int_0^{\tau_1}\mathcal{P}
e^{s\mathcal{L}}\mathcal{PLQLQL}u_0dsd\tau_1\nonumber\\
&+\dots+
\int_0^t\int_0^{\tau_{n-1}}\dots\int_0^{\tau_1}\mathcal{P}
e^{s\mathcal{L}}\mathcal{PL}(\mathcal{QL})^nu_0ds
d\tau_1\dots d\tau_{n-1}+\dots .
\end{align}
So far no approximation was introduced, i.e., the infinite-dimensional 
system \eqref{hier_equation} and the corresponding formal solution \eqref{wn(t)} are {\em exact}. 
To make progress in developing a computational scheme 
to estimate the memory integral \eqref{w0}, it is 
necessary to introduce approximations. 
The simplest of these rely on truncating the 
hierarchy \eqref{hier_equation} after $n$ equations, 
while simultaneously introducing an approximation 
of the $n$-th order memory integral $w_n(t)$. We denote such an
approximation as $w_n^{e_n}(t)$. The truncated system takes 
the form  
\begin{equation}
\left\{
\begin{array}{lcl}
\displaystyle\frac{dw_0^n(t)}{dt} &=&
\mathcal{P}e^{t\mathcal{L}}\mathcal{PLQL}u_0+w_1^n(t),\vspace{0.2cm}\\
\displaystyle\frac{dw_1^n(t)}{dt} &=&
\mathcal{P}e^{t\mathcal{L}}\mathcal{PLQLQL}u_0+w_2^n(t),\\
&\vdots\\
\displaystyle\frac{dw_{n-1}^n(t)}{dt} &=&
\mathcal{P}e^{t\mathcal{L}}\mathcal{PL}(\Q\L)^{n}u_0+w_n^{e_n}(t).
\end{array}
\right.
\label{hier_equation1}
\end{equation}
The notation $w_j^n(t)$ ($j=0,..,n-1$) emphasizes 
that the solution to \eqref{hier_equation1}
is, in general, different from the solution to 
\eqref{hier_equation}.  
The initial condition of the system can be set as  
$w_i^{n}(0)=0$, for all $i=0,\dots,n-1$. By using 
backward substitution, this yields the following formal solution 
\begin{align}
\label{w0n(t)}
w_0^n(t)=&\int_0^t\mathcal{P}
e^{s\mathcal{L}}\mathcal{PLQL}u_0ds
+
\int_0^t\int_0^{\tau_1}\mathcal{P}
e^{s\mathcal{L}}\mathcal{PLQLQL}u_0dsd\tau_1\nonumber\\
&+\dots+
\int_0^t\int_0^{\tau_{n-1}}\dots\int_0^{\tau_1}\mathcal{P}
e^{s\mathcal{L}}\mathcal{PL}(\mathcal{QL})^nu_0ds
d\tau_1\dots d\tau_{n-1}\nonumber\\
&+\int_0^t\int_0^{\tau_{n-1}}\dots\int_0^{\tau_1}
w_n^{e_n}(s)dsd\tau_1\dots d\tau_{n-1}
\end{align}
representing an approximation of the memory integral \eqref{w0}.
Note that, for a given system, such approximation depends 
only on the number of equations $n$ in \eqref{hier_equation1}, 
and on the choice of approximation $w_n^{e_n}(t)$.
In the present paper, we consider the following 
choices\footnote{The quantities $t_n$ and $\Delta t_n$ appearing in 
\eqref{n-th_order_approxi} and \eqref{type_2} will be defined in 
Theorem \ref{Thm_type_1} and Theorem
\ref{finite_memory_approximation}, respectively.}
\begin{enumerate}
\item Approximation by truncation ($H$-model)
\begin{align}\label{truncation_appro} 
w_n^{e_n}(t)=0.
\end{align}
\item Type-\rom{1} finite memory approximation
\begin{align}\label{n-th_order_approxi}
w_n^{e_n}(t)=\int_{\max(0,t-\Delta t_n)}^t\P e^{s\mathcal{L}}
\mathcal{PL}e^{(t-s)\mathcal{QL}}(\mathcal{QL})^{n+1}u_0ds.
\end{align}
\item Type-\rom{2} finite memory approximation
\begin{align}\label{type_2}
w_n^{e_n}(t)=\int_{\min(t,t_n)}^t\P e^{s\mathcal{L}}
\mathcal{PL}e^{(t-s)\mathcal{QL}}(\mathcal{QL})^{n+1}u_0ds.
\end{align} 
\item $H_t$-model 
\begin{align}\label{Ht_state}
w_n^{e_n}(t)=t\P e^{t\L}\P\L(\Q\L)^{n+1}u_0.
\end{align} 
\end{enumerate}
The first approximation is a truncation of the 
hierarchy obtained by assuming that $w_n(t)=0$. \red{Such approximation 
was originally proposed by Stinis in \cite{stinis2007higher}, and we shall 
call it the $H$-model. The Type-\rom{1} finite memory approximation (FMA) 
is obtained by applying the short memory approximation to the $n$-th order memory integral $w_n(t)$.}
The Type-\rom{2} finite memory approximation (FMA) is a modified 
version of the Type-\rom{1}, with a larger memory band. The $H_t$-
model approximation is based on replacing the $n$-th order memory 
integral $w_n(t)$ with a classical $t$-model.  Note that in this setting 
the classical $t$-model approximation proposed by Chorin and Stinis 
\cite{Chorin1} is equivalent to a zeroth-order $H_t$-model 
approximation.  

Hereafter, we present a thorough mathematical analysis  
that aims at estimating the error $\|w_0(t)-w_0^n(t)\|$, 
where $w_0(t)$ is full memory at time $t$ (see \eqref{w0} or \eqref{wn(t)}), 
while $w_0^{n}(t)$ is the solution of the truncated 
hierarchy \eqref{hier_equation1}, with $w_n^{e_n}(t)$ 
given by \eqref{truncation_appro}, \eqref{n-th_order_approxi}, 
\eqref{type_2} or \eqref{Ht_state}. With such error estimates available, 
we can infer whether the approximation of the full memory $w_0(t)$ with 
$w_0^n(t)$ is accurate and, more importantly, 
if the algorithm to approximate the memory integral 
converges.  To the best of our knowledge, this is the first time a rigorous 
convergence analysis is performed on various approximations of the 
MZ memory integral. It turns out that the distance $\|w_0(t)-w_0^n(t)\|$ can 
be controlled through the construction of the hierarchy under 
some constraint on the initial condition. 

\subsubsection{The \texorpdfstring{$H$}{Lg}-Model}
\label{sec:Hmodel} 
Setting $w_n^{e_n}(t)=0$ in \eqref{hier_equation1} yields an
approximation by truncation, which we will refer to 
as the $H$-model (hierarchical model). Such model was originally 
proposed by Stinis in \cite{stinis2007higher}. Hereafter 
we provide error estimates and convergence results 
for this model. In particular, we derive an upper 
bound for the error $\|w_0(t)-w_0^n(t)\|$, and 
sufficient conditions for convergence of the reduced-order 
dynamical system. Such conditions are problem dependent, i.e., they 
involve the Liouvillian $\L$, the initial condition $u_0$, and the 
projection $\P$.
\begin{theorem}\label{Thm_Decaying_error}
{\bf (Accuracy of the $H$-model)}
Let $e^{t\mathcal{L}}$ and $e^{t\L\Q}$ be 
strongly continuous semigroups with upper bounds 
$\|e^{t\L}\|\leq Me^{t\omega}$ and $\|e^{t\L\Q}\|\leq M_{{\Q}}e^{t
\omega_{\Q}}$, and let $T>0$ be a fixed integration time.  For some fixed $n$, let 
\begin{align}
\alpha_{j} = \frac{\|(\mathcal{LQ})^{j+1}\L u_0\|}{\|
(\mathcal{LQ})^{j}\L u_0\|}, \quad 1\leq j\leq n.
\label{condition}
\end{align}
Then, for any $1\leq p\leq n$ and all $t\in[0,T]$, we have
\begin{align*}
\|w_0(t)-w_0^{p}(t)\|\leq M_3^{p}(t) \leq M_{3}^{p}(T),
\end{align*}
where 
\begin{align*}	
	M_3^{p}(t)=C_1A_1A_2\frac{t^{p+1}}{(p+1)!}\prod_{j=1}^p\alpha_j, \qquad\qquad  C_1=\|\L\Q\L u_0\|MM_{\Q},
\end{align*}
 and
\begin{align}
\label{AoneAtwo}
A_1&=\max_{s\in[0,T]} e^{s(\omega-\omega_{\Q})}=
\begin{dcases}
1\quad &\omega\leq \omega_{\Q} \\
e^{T(\omega-\omega_{\Q})}\quad &\omega\geq \omega_{\Q}
\end{dcases},
& 
A_2&=\max_{s\in[0,T]} e^{s\omega_{\Q}}=
\begin{dcases}
1\quad & \omega_{\Q}\leq 0 \\
e^{T\omega_{\Q}}\quad &\omega_{\Q}\geq0
\end{dcases}.
\end{align}
\end{theorem}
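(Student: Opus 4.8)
The plan is to exploit the structural fact that, for the $H$-model, the truncated hierarchy \eqref{hier_equation1} differs from the exact hierarchy \eqref{hier_equation} \emph{only} in its last equation, where the exact source term $w_p(t)$ is replaced by $w_p^{e_p}(t)=0$. First I would introduce the error variables $e_j(t):=w_j(t)-w_j^{p}(t)$ for $j=0,\dots,p-1$ and subtract \eqref{hier_equation1} from \eqref{hier_equation}. Since the inhomogeneous terms $\P e^{t\L}\P\L(\Q\L)^{j+1}u_0$ are identical in both systems, they cancel, leaving the triangular linear chain $\frac{d}{dt}e_j=e_{j+1}$ for $j=0,\dots,p-2$ together with $\frac{d}{dt}e_{p-1}=w_p(t)$, all subject to the homogeneous initial data $e_j(0)=0$. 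Integrating this chain by backward substitution produces the representation
\begin{align*}
w_0(t)-w_0^{p}(t)=\int_0^t\int_0^{\tau_{p-1}}\cdots\int_0^{\tau_1}w_p(s)\,ds\,d\tau_1\cdots d\tau_{p-1},
\end{align*}
i.e. a $p$-fold iterated integral of the $p$-th order memory $w_p$.

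The second step is a pointwise bound on $\|w_p(s)\|$. Rewriting \eqref{wn} via the intertwining identities $\L e^{(s-r)\Q\L}=e^{(s-r)\L\Q}\L$ and $\L(\Q\L)^{p+1}=(\L\Q)^{p+1}\L$ gives $w_p(s)=\int_0^s\P e^{r\L}\P e^{(s-r)\L\Q}(\L\Q)^{p+1}\L u_0\,dr$. Applying the triangle inequality with the semigroup bounds $\|e^{r\L}\|\le Me^{r\omega}$, $\|e^{(s-r)\L\Q}\|\le M_{\Q}e^{(s-r)\omega_{\Q}}$ and the contractivity $\|\P\|\le 1$, then bounding $\int_0^s e^{r(\omega-\omega_{\Q})}dr\le sA_1$ and $e^{s\omega_{\Q}}\le A_2$ with $A_1,A_2$ from \eqref{AoneAtwo}, yields $\|w_p(s)\|\le MM_{\Q}\|(\L\Q)^{p+1}\L u_0\|\,A_1A_2\,s$. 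The crucial observation is that the ratios $\alpha_j$ telescope, so that $\|(\L\Q)^{p+1}\L u_0\|=\|\L\Q\L u_0\|\prod_{j=1}^{p}\alpha_j$; this is exactly how the product $\prod_{j=1}^p\alpha_j$ and the constant $C_1=MM_{\Q}\|\L\Q\L u_0\|$ enter, giving $\|w_p(s)\|\le C_1A_1A_2\big(\prod_{j=1}^p\alpha_j\big)s$.

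Finally, I would insert this pointwise estimate into the iterated integral, pull the constants out, and evaluate the $p$-fold iterated integral of the linear function $s$, which equals $t^{p+1}/(p+1)!$ by the standard Cauchy formula for repeated integration. This produces the claimed bound $M_3^{p}(t)$, and monotonicity of $M_3^{p}$ in $t$ immediately gives $M_3^{p}(t)\le M_3^{p}(T)$ on $[0,T]$.

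The main obstacle is the first step: rigorously justifying the error-propagation representation. One must verify that the sole discrepancy between the two hierarchies sits in the final equation and that it propagates upward through precisely $p$ integrations without generating extra terms — this hinges on the linearity of the chain and on the differentiability of the $w_j$ assumed in the footnote attached to \eqref{hier_equation}. Once this representation is secured, the remaining work is routine; a minor but worthwhile care point is the uniform-in-$s$ estimate $\int_0^s e^{r(\omega-\omega_{\Q})}dr\le sA_1$, which is precisely what decouples the exponential growth (absorbed into $A_1A_2$) from the factorial gain $1/(p+1)!$ that is responsible for convergence as $p\to\infty$ whenever the $\alpha_j$ remain controlled.
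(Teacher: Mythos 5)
Your proposal is correct and follows essentially the same route as the paper's proof: both rest on the identical iterated-integral representation of $w_0(t)-w_0^{p}(t)$ (yours derived by subtracting the two hierarchies, the paper's read off from the Dyson-series solutions), the same semigroup bounds with $A_1,A_2$, and the same telescoping $\|(\L\Q)^{p+1}\L u_0\|=\|\L\Q\L u_0\|\prod_{j=1}^{p}\alpha_j$. The only cosmetic difference is that you bound the integrand $\|w_p(s)\|\le C_1A_1A_2\bigl(\prod_{j=1}^p\alpha_j\bigr)s$ pointwise and then integrate $p$ times, whereas the paper first collapses the outer integrals via Cauchy's repeated-integration formula and then bounds; the two computations are identical.
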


\begin{proof}
We begin with the expression for the difference between the memory 
term $w_{0}$ and its approximation $w_{0}^{p}$ 
\begin{align}
w_{0}(t) - w_{0}^{p}(t) & = \int_{0}^{t}\int_{0}^{\tau_{p}}\cdots \int_{0}^{\tau_{2}}\int_{0}^{\tau_{1}} \P e^{s\L} \P e^{(\tau_{1}-s)\L\Q }(\L\Q)^{n+1}\L u_{0}dsd\tau_{1}\cdots d\tau_{p}.	
\label{w0w1}
\end{align}
Since $e^{t\LV}$ and $e^{t\LV\PrjC}$ are strongly continuous semigroups we have $\|e^{t\LV}\|\leq Me^{\omega t}$ and $\|e^{t\LV\PrjC}\|\leq M_{\PrjC}e^{\omega_{\PrjC}t}$.  By using Cauchy's formula for repeated integration,  we bound the norm of the error \eqref{w0w1} as
\begin{align}\label{esti}
\|w_{0}(t) - w_{0}^{p}(t)\| & \leq \int_{0}^{t}\frac{(t-\sigma)^{p-1}}{(p-1)!}\int_{0}^{\sigma}\|\P e^{s\LV}\Prj e^{(\sigma-s)\LV \PrjC}(\LV\PrjC)^{p+1}\LV u_{0}\|dsd\sigma\nonumber\\
	& \leq \|\P\|^2MM_{\PrjC}\|(\LV\PrjC)^{p+1}\LV u_{0}\|\int_{0}^{t}\frac{(t-\sigma)^{p-1}}{(p-1)!}\int_{0}^{\sigma} e^{s\omega} e^{(\sigma-s)\omega_{\PrjC}}dsd\sigma\nonumber\\
	& \leq C_{1}\left(\prod_{j=1}^p\alpha_j\right)\underbrace{\int_{0}^{t}\frac{(t-\sigma)^{p-1}}{(p-1)!}\int_{0}^{\sigma} e^{s\omega} e^{(\sigma-s)\omega_{\PrjC}}dsd\sigma}_{f_p(t,\omega,\omega_{\Q})}\nonumber	\\
	&=C_1\left(\prod_{j=1}^p\alpha_j\right)f_p(t,\omega,\omega_{\Q}),
\end{align}
where $C_1=\|\P\|^2\|\L\Q\L u_0\|MM_{\Q}$ as before. The function $f_p(t,\omega,\omega_{\Q})$, may be bounded from above as
\begin{align*}
f_p(t,\omega,\omega_{\Q})&\leq A_1A_2\int_{0}^{t}\frac{(t-\sigma)^{p-1}}{(p-1)!}\int_{0}^{\sigma} dsd\sigma	\\
&=A_1A_2\frac{t^{p+1}}{(p+1)!}.
\end{align*}
Hence, we have 
\begin{align*}
\|w_{0}(t) - w_{0}^{p}(t)\| & \leq C_1A_1A_2
	\left(\prod_{j=1}^p\alpha_j\right)
	\frac{t^{p+1}}{(p+1)!}=M_3^{p}(t).
\end{align*}

\end{proof}

Theorem \ref{Thm_Decaying_error} states that
for a given dynamical system (represented by $\L$) and 
quantity of interest (represented by $\P$)  the 
error bound $M_3^{p}(t)$ is strongly related 
to $\{\alpha_j\}$ which is ultimately determined by 
the initial condition $x_0$. It turns out that by 
bounding $\{\alpha_j\}$, we can control $M_3^{p}(t)$, 
and therefore the overall error $\|w_0(t)-w^p_0(t)\|$.
The following corollaries discuss sufficient conditions 
such that the error $\|w_0(T)-w_0^n(T)\|$ decays as we 
increase the differentiation order $n$ for fixed time $T>0$.

\begin{corollary}\label{coro_state_sequence}
{\bf (Uniform convergence of the $H$-model)} 
If $\{\alpha_j\}$ in Theorem\ref{Thm_Decaying_error} satisfy
\begin{align}\label{Thm_Decaying_suff}
\alpha_j<\frac{j+1}{T},\quad 1\leq j\leq n,
\end{align} 
for any fixed time $T > 0$, then there exists a sequence of 
constants $\delta_1>\delta_2>\dots >\delta_n$ such that 
\begin{align*}
\|w_0(T)-w_0^{p}(T )\|\leq \delta_p \qquad 1\leq p\leq n.
\end{align*}
\end{corollary}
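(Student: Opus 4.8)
The plan is to take $\delta_p := M_3^p(T)$ directly, the bound already furnished by Theorem \ref{Thm_Decaying_error}, and simply verify that this sequence is strictly decreasing under the hypothesis \eqref{Thm_Decaying_suff}. Theorem \ref{Thm_Decaying_error} gives $\|w_0(T)-w_0^p(T)\|\leq M_3^p(T)$ for every $1\leq p\leq n$, so the bound half of the claim is immediate; the only thing left to establish is the chain of strict inequalities $\delta_1>\delta_2>\cdots>\delta_n$.

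The key computation is the ratio of consecutive bounds. Writing out $M_3^p(T)=C_1A_1A_2\frac{T^{p+1}}{(p+1)!}\prod_{j=1}^p\alpha_j$, the prefactors $C_1$, $A_1$, $A_2$ cancel while the factorials, the powers of $T$, and the products telescope, leaving $\frac{M_3^p(T)}{M_3^{p-1}(T)}=\frac{T}{p+1}\alpha_p$ for $2\leq p\leq n$. This is exactly where the hypothesis enters: condition \eqref{Thm_Decaying_suff} with index $j=p$ reads $\alpha_p<\frac{p+1}{T}$, so the ratio is strictly less than $1$, whence $M_3^p(T)<M_3^{p-1}(T)$.

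Chaining these inequalities yields $\delta_1>\delta_2>\cdots>\delta_n$, and combining with the bound from Theorem \ref{Thm_Decaying_error} completes the argument. I anticipate no serious obstacle here — the corollary is essentially a monotonicity reading of the explicit expression for $M_3^p(T)$ — but the one point demanding care is aligning the index in the hypothesis $\alpha_j<\frac{j+1}{T}$ with the index appearing in the consecutive-ratio test, since it is precisely the condition on $\alpha_p$ (not $\alpha_{p-1}$) that forces the $p$-th term to drop below the $(p-1)$-th. One should also note that the hypothesis is imposed for all $1\leq j\leq n$, which is slightly stronger than strictly necessary for monotonicity alone (the case $j=1$ is not used in the ratio test), but it is exactly what will be needed for the sharper decay estimates in the sequel.
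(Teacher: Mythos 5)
Your proposal is correct and follows essentially the same route as the paper's own proof: both take $\delta_p = M_3^p(T)$ (the bound from Theorem \ref{Thm_Decaying_error}) and establish strict decrease via the consecutive-ratio computation $M_3^{p}(T)/M_3^{p-1}(T)=\alpha_p T/(p+1)<1$, which is exactly the inequality the paper invokes (with the index written as $\alpha_{p+1}T/(p+2)<1$). Your side remark that the $j=1$ case of the hypothesis is not needed for monotonicity is also accurate.
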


\begin{proof}
Evaluating \eqref{esti} at any fixed (finite) time $T>0$ yields 
\begin{align*}
\|w_{0}(T) - w_{0}^{p}(T)\| & \leq
C_2\left(\prod_{j=1}^p\alpha_i\right)f_p(T,\omega,\omega_{\Q})
\leq 
C_2\left(\prod_{j=1}^p\alpha_j\right)\frac{T^{p+1}}{(p+1)!},\\
\|w_{0}(T) - w_{0}^{p+1}(T)\| & \leq
C_2\left(\prod_{j=1}^{p+1}\alpha_j\right)\frac{T^{p+2}}{(p+2)!},
\end{align*}
where $C_2=C_2(T)=C_1A_1A_2$. If there 
exists $\delta_p\geq 0$ such that 
\begin{align*}
\|w_{0}(T) - w_{0}^{p}(T)\| &\leq C_2\left(\prod_{j=1}^p\alpha_j\right)\frac{T^{p+1}}{(p+1)!}\leq   \delta_p,
\end{align*}
then there exist a $\delta_{p+1}$ such that 
\begin{align*}
\|w_{0}(T) - w_{0}^{p+1}(T)\| &\leq
C_2\left(\prod_{j=1}^p\alpha_j\right)\frac{T^{p+1}}{(p+1)!} \frac{\alpha_{p+1} T}{p+2}\leq \delta_{p+1}< \delta_p,
\end{align*}
since $\alpha_{p+1}< (p+2)/T$. Moreover, 
the condition $\alpha_j < (j+1)/T$ holds for all $1\leq j\leq n$. Therefore, we conclude that for any fixed time $T>0$, there exists a sequence of constants $\delta_1>\delta_2>\dots >\delta_n$ such that $\|w_0(T)-w_0^{p}(T )\|\leq \delta_p$, where $1\leq p\leq n$.

\end{proof}

\noindent
Corollary \ref{coro_state_sequence} provides a sufficient condition 
for the error $\|w_0(t)-w_0^p(t)\|$ to decrease monotonically as we increase $p$ in \eqref{hier_equation1}. A stronger condition that yields an asymptotically decaying error bound is given by the following Corollary.

\begin{corollary}\label{cor_decaying_1}
{\bf (Asymptotic convergence of the $H$-model)}
If $\alpha_j$ in Theorem \ref{Thm_Decaying_error} satisfies
\begin{align}\label{cor_conver}
\alpha_j<C,\quad 1\leq j< +\infty
\end{align}
for some positive constant $C$, then for any fixed time $T>0$, 
and arbitrary $\delta>0$, there exists a 
constant $1\leq p<+\infty$ such that for all $n>p$,
\begin{align*}
\|w_0(T)-w_0^{n}(T )\|\leq \delta.
\end{align*}
\end{corollary}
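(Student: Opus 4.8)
The plan is to feed the uniform bound $\alpha_j < C$ directly into the error estimate already established in Theorem \ref{Thm_Decaying_error}, and then invoke the elementary fact that a fixed power divided by a factorial tends to zero. Since the hypothesis $\alpha_j < C$ holds for \emph{every} $j \geq 1$, Theorem \ref{Thm_Decaying_error} applies at arbitrarily large differentiation order; taking $p = n$ in its conclusion gives, for each $n$,
\begin{align*}
\|w_0(T) - w_0^n(T)\| \leq M_3^n(T) = C_1 A_1 A_2 \frac{T^{n+1}}{(n+1)!}\prod_{j=1}^n \alpha_j.
\end{align*}

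First I would replace the product by the crude bound $\prod_{j=1}^n \alpha_j < C^n$ (valid since each $\alpha_j \geq 0$), which yields
\begin{align*}
\|w_0(T) - w_0^n(T)\| \leq C_1 A_1 A_2\, T\, \frac{(CT)^n}{(n+1)!}.
\end{align*}
Here $C_1$, $A_1$, $A_2$, $C$, and $T$ are all fixed positive constants independent of $n$, so the entire $n$-dependence is carried by the factor $(CT)^n/(n+1)!$.

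The crux is then purely analytic: for any fixed $x = CT > 0$ one has $x^n/(n+1)! \to 0$ as $n \to \infty$. This is immediate from the convergence of the exponential series $\sum_n x^n/n! = e^x < \infty$, whose general term must vanish; since $x^n/(n+1)! \leq x^n/n!$, the same holds \emph{a fortiori}. Equivalently, the ratio of consecutive terms is $x/(n+2) \to 0 < 1$, so decay follows from the ratio test. Consequently the right-hand side of the displayed bound tends to zero as $n \to \infty$.

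Finally, given any $\delta > 0$ and fixed $T > 0$, the limit $C_1 A_1 A_2 T\, (CT)^n/(n+1)! \to 0$ furnishes an index $p$ beyond which this quantity is at most $\delta$; that is, for all $n > p$ we obtain $\|w_0(T) - w_0^n(T)\| \leq \delta$, as claimed. I expect no genuine obstacle here, since the substantive work was already done in Theorem \ref{Thm_Decaying_error}; this corollary simply records that a \emph{uniform} control on the ratios $\alpha_j$ suffices for the factorial growth in the denominator to force convergence, strengthening the merely monotone decrease of Corollary \ref{coro_state_sequence}. The only point requiring minor care is that the bound $M_3^n(T)$ be available for every $n$, which is guaranteed precisely because $\alpha_j < C$ is assumed for all $j < \infty$ rather than up to a finite order.
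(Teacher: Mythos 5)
Your proposal is correct and follows essentially the same route as the paper's own proof: substitute the uniform bound $\alpha_j < C$ into the estimate of Theorem \ref{Thm_Decaying_error} to obtain $\|w_0(T)-w_0^n(T)\| \leq C_1A_1A_2\,T\,(CT)^n/(n+1)!$, then let the factorial in the denominator force the bound to zero. Your added justification of the limit via the exponential series (or ratio test) is a harmless elaboration of the step the paper states without comment.
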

\begin{proof}
By introducing the condition $\alpha_j<C$  in 
the proof of Theorem \ref{Thm_Decaying_error} we obtain
\begin{align*}
\|w_{0}(T) - w_{0}^{p}(T)\| &\leq C_2\left(\prod_{j=1}^p\alpha_j\right)\frac{T^{p+1}}{(p+1)!}\leq C_2T\frac{(CT)^p}{(p+1)!}\qquad  \textrm{for all $1<p<+\infty$}.
\end{align*}
The limit 
\begin{align*}
\lim_{p\rightarrow+\infty}C_2T\frac{(CT)^p}{(p+1)!}=0
\end{align*}
allows us to conclude that there exists a constant $1<p<+\infty$ such that for all $n>p$, $\|w_0(T)-w_0^{n}(T )\|\leq \delta$.
\end{proof}
An interesting consequence of Corollary \ref{cor_decaying_1} is
the existence of a {\em convergence barrier}, i.e., a ``hump'' in the 
error plot $\|w_0(T)-w_0^p(T)\|$ versus $p$ generated by 
the $H$-model. While Corollary \ref{cor_decaying_1} only shows that behavior for an upper bound of the error, not directly the error itself, the feature is often found in the actual errors associated with numerical methods based on these ideas.
The following Corollary shows that the requirements on
$\{\alpha_j\}$ can be dropped (we still need $\alpha_j<+\infty$) 
if we consider relatively short integration times $T$.
\begin{corollary}\label{cor_decaying}
{\bf (Short-time convergence of the $H$-model)} For any 
integer $n$ for which $\alpha_{j}<\infty$ for $1\leq j\leq n$, and any sequence of constants 
$\delta_1>\delta_2>\dots >\delta_n>0$, there exists 
a fixed time $T>0$ such that 
\begin{align*}
\|w_0(T)-w_0^{p}(T)\|\leq
\delta_p
\end{align*}
for $1\leq p\leq n$.
\end{corollary}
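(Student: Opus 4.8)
The plan is to read off the explicit error bound $M_3^p(t)$ supplied by Theorem \ref{Thm_Decaying_error} and exploit the fact that, for each fixed $p$, this bound vanishes as $t\to 0^+$. First I would fix $n$ with $\alpha_j<\infty$ for $1\le j\le n$, so that every finite product $\prod_{j=1}^{p}\alpha_j$ with $1\le p\le n$ is finite. Evaluating the estimate of Theorem \ref{Thm_Decaying_error} at time $t=T$ then gives, for each such $p$,
\[
\|w_0(T)-w_0^{p}(T)\|\le M_3^{p}(T)=C_1 A_1 A_2\,\frac{T^{p+1}}{(p+1)!}\prod_{j=1}^{p}\alpha_j,
\]
where $C_1=\|\L\Q\L u_0\|MM_{\Q}$ is independent of $T$, while $A_1$ and $A_2$ depend on $T$ through \eqref{AoneAtwo}.

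Next I would regard $M_3^{p}$ as a function of the single variable $T$ and record two elementary properties. First, $M_3^{p}$ is non-decreasing in $T$: indeed $A_1=\max_{s\in[0,T]}e^{s(\omega-\omega_{\Q})}$ and $A_2=\max_{s\in[0,T]}e^{s\omega_{\Q}}$ are non-decreasing in $T$, and $T^{p+1}$ is increasing. Second, $\lim_{T\to 0^{+}}M_3^{p}(T)=0$, since $A_1\to 1$ and $A_2\to 1$ while $T^{p+1}\to 0$ (using $p\ge 1$). Combining these, for each $p$ with $1\le p\le n$ there exists $T_p>0$ such that $M_3^{p}(T)\le\delta_p$ for all $0<T\le T_p$.

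Finally I would take $T=\min\{T_1,\dots,T_n\}>0$, which is strictly positive because it is the minimum of finitely many positive numbers. For this single $T$ and each $1\le p\le n$ we have $T\le T_p$, so monotonicity yields $M_3^{p}(T)\le M_3^{p}(T_p)\le\delta_p$, and hence $\|w_0(T)-w_0^{p}(T)\|\le\delta_p$ as required.

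I do not expect a genuine obstacle here: the argument is a compactness-free selection of a common $T$ via the minimum over the finitely many indices $p$. The only points requiring a line of care are (i) producing one $T$ that simultaneously handles all $p$, which the minimum resolves, and (ii) observing that the prescribed ordering $\delta_1>\delta_2>\cdots>\delta_n$ is not actually used beyond positivity, since each inequality $M_3^{p}(T)\le\delta_p$ is secured independently.
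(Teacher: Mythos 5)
Your proposal is correct and follows essentially the same route as the paper: both evaluate the bound $M_3^{p}$ of Theorem \ref{Thm_Decaying_error} at $t=T$, use that it vanishes as $T\to 0^{+}$, and secure a single $T$ valid for all $1\le p\le n$ by taking a minimum over finitely many thresholds. The only difference is cosmetic: the paper uniformizes with $C=\max_{1\le j\le n}\alpha_j$ to write one explicit admissible bound on $T$, whereas you argue existentially via monotonicity of $A_1A_2T^{p+1}$ in $T$ -- which, incidentally, cleanly accounts for the fact that the constant $C_2=C_1A_1A_2$ in the paper's proof itself depends on $T$.
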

\begin{proof}
Since $\alpha_j<+\infty$, we can choose
$\displaystyle C=\max_{1\leq j\leq n}\alpha_j$. 
By following the same steps we used in the proof of Theorem \ref{Thm_Decaying_error}, we conclude that, for
\begin{align*}
	T \leq \frac{1}{C}\min_{1\leq p\leq n}\left[\frac{C(p+1)!}{C_{2}}\delta_{p}\right]^\frac{1}{p+1},
\end{align*}
the errors satisfy
\begin{align*}
\|w_0(T)-w_0^{p}(T )\|&\leq C_2\left(\prod_{j=1}^p\alpha_j\right)\frac{T^{p+1}}{(p+1)!}\leq \frac{C_2}{C}\frac{(CT)^{p+1}}{(p+1)!}\leq \delta_p
\end{align*}
as desired, for all $1\leq p\leq n$.

\end{proof}

Corollary \ref{coro_state_sequence} and Corollary \ref{cor_decaying_1} provide sufficient conditions for the error $\|w_0(T)-w_0^{n}(T)\|$ 
generated by the $H$-model to decay as we increase the truncation 
order $n$. However, we still need to answer the important question of 
whether the $H$-model actually provides accurate results for a given 
nonlinear dynamics ($\L$), quantity of intererest ($\P$) and initial 
state $x_0$. Corollary \ref{cor_decaying} provides a partial answer to 
this question by showing that, at least in the short time period, condition 
\eqref{Thm_Decaying_suff} is always satisfied (assuming that 
$\{\alpha_j\}$ are finite). This guarantees the short-time 
convergence of the $H$-model for any reasonably smooth 
nonlinear dynamical system and almost any observable. However, 
for longer integration times $T$, convergence of the $H$-model 
for arbitrary nonlinear dynamical systems cannot be 
established in general, which means that we need to 
proceed on a case-by-case basis by applying Theorem \ref{Thm_Decaying_error} or by checking whether the 
hypotheses of Corollary \ref{coro_state_sequence} or 
Corollary \ref{cor_decaying_1} are 
satisfied\footnote{The implementation of the $H$-model requires 
computing $(\L\Q)^n\L x_0$ to high-order in $n$. 
This is not straightforward in nonlinear dynamical systems. 
However, such terms can be easily and effectively computed 
for linear dynamical systems. This yields a fast and practical memory approximation scheme for linear systems.}.
On the other hand, convergence of the $H$-model can be 
established for any finite integration time in 
the case of linear dynamical systems, as we 
have recently shown in \cite{YuanFaber2018}. 

\subsubsection{Type-\rom{1} Finite Memory Approximation (FMA)}
The Type-\rom{1} finite memory approximation is obtained by solving 
the system \eqref{hier_equation1} with $w_n^{e_n}(t)$ given by 
\eqref{n-th_order_approxi}. As before, we first derive 
an upper bound for $\|w_0(t)-w_0^n(t)\|$ and then discuss sufficient conditions for convergence. Such conditions basically control the growth of an upper bound on $\|w_0(t)-w_0^n(t)\|$.

\begin{theorem}\label{Thm_type_1}
{\bf(Accuracy of the Type-\rom{1} FMA)}
Let $e^{t\mathcal{L}}$ and $e^{t\L\Q}$ be 
strongly continuous semigroups and let $T>0$ be a fixed integration time. 
If 
\begin{align}
\alpha_{j} = \frac{\|(\mathcal{LQ})^{j+1}\L u_0\|}{\|
(\mathcal{LQ})^{j}\L u_0\|}, \quad 1\leq j\leq n,
\label{condition1}
\end{align}
then for each $1\leq p\leq n$ \red{and for $\Delta t_{p}\leq t\leq T$}
\begin{align*}
\|w_0(t)-w_0^{p}(t )\|\leq M_4^{p}(t),
\end{align*}
where $$\displaystyle M_4^{p}(t)=
C_1A_1A_2\left(\prod_{i=1}^p\alpha_i\right)
\frac{(t-\Delta t_p)^{p+1}}{(p+1)!},$$ and  $C_1,A_1,A_2$ are as in Theorem \ref{Thm_Decaying_error}.
\end{theorem}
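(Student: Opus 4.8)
The plan is to reproduce the architecture of the proof of Theorem~\ref{Thm_Decaying_error}, the only structural change being that the term driving the bottom of the hierarchy is no longer the full memory $w_p(t)$ but the difference $w_p(t)-w_p^{e_p}(t)$. First I would use the fact that the exact hierarchy \eqref{hier_equation} and the truncated hierarchy \eqref{hier_equation1} carry identical forcing terms $\P e^{t\L}\P\L(\Q\L)^{j+1}u_0$ at every level $0\le j\le p-1$. Subtracting the two systems gives $\frac{d}{dt}(w_j-w_j^p)=w_{j+1}-w_{j+1}^p$ for $0\le j\le p-2$ and $\frac{d}{dt}(w_{p-1}-w_{p-1}^p)=w_p-w_p^{e_p}$, all with vanishing initial data. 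Integrating from the bottom of the hierarchy upward (backward substitution) yields the exact representation
\begin{align*}
w_0(t)-w_0^p(t)=\int_0^t\int_0^{\tau_{p-1}}\cdots\int_0^{\tau_1}\left[w_p(s)-w_p^{e_p}(s)\right]ds\,d\tau_1\cdots d\tau_{p-1},
\end{align*}
which is the Type-\rom{1} analogue of \eqref{w0w1}.

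Next I would make the driving term explicit. Since $w_p^{e_p}$ is the short-memory truncation \eqref{n-th_order_approxi} of $w_p$, their difference is precisely the discarded tail,
\begin{align*}
w_p(s)-w_p^{e_p}(s)=\int_0^{\max(0,\,s-\Delta t_p)}\P e^{\tau\L}\P e^{(s-\tau)\L\Q}(\L\Q)^{p+1}\L u_0\,d\tau,
\end{align*}
which vanishes identically for $s<\Delta t_p$. This is the one genuinely new ingredient: it restricts the estimate to $\Delta t_p\le t\le T$ and shrinks the effective integration length from $t$ to $t-\Delta t_p$. I would then collapse the $p$-fold nested integration ($ds$ together with $d\tau_1,\dots,d\tau_{p-1}$) by Cauchy's formula for repeated integration, producing the kernel $(t-\sigma)^{p-1}/(p-1)!$, and use the vanishing of the tail to push the outer lower limit up to $\Delta t_p$, giving
\begin{align*}
\|w_0(t)-w_0^p(t)\|\le\int_{\Delta t_p}^t\frac{(t-\sigma)^{p-1}}{(p-1)!}\int_0^{\sigma-\Delta t_p}\left\|\P e^{\tau\L}\P e^{(\sigma-\tau)\L\Q}(\L\Q)^{p+1}\L u_0\right\|d\tau\,d\sigma.
\end{align*}

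From here the estimates are identical to those in Theorem~\ref{Thm_Decaying_error}: I would bound the semigroups by $\|e^{\tau\L}\|\le Me^{\omega\tau}$ and $\|e^{(\sigma-\tau)\L\Q}\|\le M_\Q e^{\omega_\Q(\sigma-\tau)}$, factor $\|(\L\Q)^{p+1}\L u_0\|$ out of the integral and rewrite it as $\|\L\Q\L u_0\|\prod_{j=1}^p\alpha_j$ via the telescoping definition \eqref{condition1}, thereby producing $C_1=\|\P\|^2\|\L\Q\L u_0\|MM_\Q$, and then bound the exponential factor $e^{\tau(\omega-\omega_\Q)}e^{\sigma\omega_\Q}\le A_1A_2$ uniformly on $[0,T]$. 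What remains is the scalar double integral $\int_{\Delta t_p}^t\frac{(t-\sigma)^{p-1}}{(p-1)!}(\sigma-\Delta t_p)\,d\sigma$; substituting $u=\sigma-\Delta t_p$ turns it into the Beta-type integral $\int_0^{t-\Delta t_p}\frac{(t-\Delta t_p-u)^{p-1}}{(p-1)!}\,u\,du=(t-\Delta t_p)^{p+1}/(p+1)!$, which assembles into exactly $M_4^p(t)$.

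The main difficulty is not analytical depth but careful bookkeeping of the integration domains. One must check that the driving term $w_p-w_p^{e_p}$ truly vanishes for $s<\Delta t_p$, so that the outer limit legitimately rises to $\Delta t_p$ and the base $t-\Delta t_p$ stays nonnegative (this is exactly what the hypothesis $\Delta t_p\le t$ guarantees), and that after the Cauchy collapse the inner $\tau$-integral runs only up to $\sigma-\Delta t_p$ rather than to $\sigma$. Once these limits are tracked correctly, the exponential and telescoping bounds are word-for-word those of the $H$-model, and the Beta integral closes the argument.
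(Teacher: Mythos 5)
Your proof is correct and follows essentially the same route as the paper's: subtract the exact and truncated hierarchies, recognize the level-$p$ driving term as the discarded tail of $w_p$ (vanishing for $s<\Delta t_p$), apply the semigroup bounds with the telescoping product $\prod_j\alpha_j$ and the uniform factor $A_1A_2$, and reduce everything to the scalar integral $(t-\Delta t_p)^{p+1}/(p+1)!$. The only cosmetic difference is bookkeeping: you collapse the $p$-fold iterated integral with Cauchy's formula up front and finish with a Beta-type integral, while the paper's proof of this particular theorem instead shifts each nested variable by $\Delta t_p$ and evaluates the resulting simplex volume at the end (Cauchy's formula being used in the sibling proofs of Theorems \ref{Thm_Decaying_error} and \ref{finite_memory_approximation}); both yield the identical bound $M_4^p(t)$.
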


\begin{proof}
The error at the $p$-th level is of the form
\begin{align*}
 	w_{p}(t) - w_{p}^{e_{p}}(t) & = \int_{0}^{\max(0,t-\Delta t_{p})}\Prj e^{s\LV}\Prj\LV e^{(t-s)\PrjC\LV}(\PrjC\LV)^{p+1}u_{0}ds
\end{align*}
and the error at the zeroth level is
\begin{subequations}
\begin{align*}
	w_{0}(t) - w_{0}^{p}(t) & = \int_{0}^{t}\int_{0}^{\tau_{p}}\cdots\int_{0}^{\tau_{2}}\left[w_{n}(\tau_{1}) - w_{n}^{e_{p}}(\tau_{1})\right]d\tau_{1}\cdots d\tau_{p}\\
	& = \int_{0}^{t}\int_{0}^{\tau_{p}}\cdots\int_{0}^{\tau_{2}}\int_{0}^{\max(0,\tau_{1}-\Delta t_{p})}\Prj e^{s\LV}\Prj e^{(\tau_{1}-s)\LV\PrjC}(\LV\PrjC)^{p+1}\LV u_{0}dsd\tau_{1}\cdots d\tau_{p}\\
	& = \int_{\Delta t_{p}}^{t}\int_{\Delta t_{p}}^{\tau_{p}}\cdots\int_{\Delta t_{p}}^{\tau_{2}}\int_{0}^{\tau_{1}-\Delta t_{p}}\Prj e^{s\LV}\Prj e^{(\tau_{1}-s)\LV\PrjC}(\LV\PrjC)^{p+1}\LV u_{0}dsd\tau_{1}\cdots d\tau_{p}\\
	& = \int_{\Delta t_{p}}^{t}\int_{\Delta t_{p}}^{\tau_{p}}\cdots\int_{0}^{\tau_{2} - \Delta t_{p}}\int_{0}^{\tilde{\tau}_{1}}\Prj e^{s\LV}\Prj e^{(\tilde{\tau}_{1}+\Delta t_{p}-s)\LV\PrjC}(\LV\PrjC)^{p+1}\LV u_{0}dsd\tilde{\tau}_{1}\cdots d\tau_{p}\\
	& \quad \vdots\notag\\
	& = \int_{0}^{\max(0,t-\Delta t_{p})}\int_{0}^{\tilde{\tau}_{p}}\cdots\int_{0}^{\tilde{\tau}_{2}}\int_{0}^{\tilde{\tau}_{1}}\Prj e^{s\LV}\Prj e^{(\tilde{\tau}_{1}+\Delta t_{p}-s)\LV\PrjC}(\LV\PrjC)^{p+1}\LV u_{0}ds d\tilde{\tau}_{1}\cdots d\tilde{\tau}_{p}.
\end{align*}
\end{subequations}
The norm of this error may be bounded as
\begin{subequations}
\begin{align*}
	\|w_{0}(t) - w_{0}^{p}(t)\| & \leq \int_{0}^{\max(0,t-\Delta t_{p})}\int_{0}^{\tilde{\tau}_{p}}\cdots\int_{0}^{\tilde{\tau}_{2}}\int_{0}^{\tilde{\tau}_{1}}\left\| \P e^{s\LV}\Prj e^{(\tilde{\tau}_{1}+\Delta t_{p}-s)\LV\PrjC}(\LV\PrjC)^{p+1}\LV u_{0}\right\|ds d\tilde{\tau}_{1}\cdots d\tilde{\tau}_{p}\\
	& \leq C_{1}\left(\prod_{j=1}^{p}\alpha_{j}\right)\int_{0}^{\max(0,t-\Delta t_{p})}\int_{0}^{\tilde{\tau}_{p}}\cdots\int_{0}^{\tilde{\tau}_{2}}\int_{0}^{\tilde{\tau}_{1}}e^{s(\omega-\omega_{\PrjC})}e^{(\tilde{\tau}_{1} + \Delta t_{p})\omega_{\PrjC}}dsd\tilde{\tau}_{1}\cdots d\tilde{\tau}_{p}\\
	& \leq C_{1}\left(\prod_{j=1}^{p}\alpha_{j}\right)f_p(t,\Delta t_{p}, \omega, \omega_{\PrjC}),
\end{align*}
\end{subequations}
where
\begin{subequations}
\begin{align*}
f_{p}(t,\Delta t_{p}, \omega, \omega_{\PrjC}) & = \int_{0}^{\max(0,t-\Delta t_{p})}\int_{0}^{\tilde{\tau}_{p}}\cdots\int_{0}^{\tilde{\tau}_{2}}\int_{0}^{\tilde{\tau}_{1}}e^{s(\omega-\omega_{\PrjC})}e^{(\tilde{\tau}_{1} + \Delta t_{p})\omega_{\PrjC}}dsd\tilde{\tau}_{1} \cdots d\tilde{\tau}_{p}.
\end{align*}
\end{subequations}
If we bound $f_{p}$ as
\begin{subequations}
\begin{align*}
f_{p}(t,\Delta t_{p}, \omega, \omega_{\PrjC}) & \leq A_{1}A_{2}\int_{0}^{\max(0,t-\Delta t_{p})}\int_{0}^{\tilde{\tau}_{p}}\cdots\int_{0}^{\tilde{\tau}_{2}}\int_{0}^{\tilde{\tau}_{1}}dsd\tilde{\tau}_{1}\cdots d\tilde{\tau}_{p}\\
& = \begin{cases}0 & 0\leq t \leq \Delta t_{p}\\
\displaystyle
A_{1}A_{2}\frac{(t-\Delta t_{p})^{p+1}}{(p+1)!} & t\geq \Delta t_{p}
\end{cases}
\end{align*}
\end{subequations}
where $A_1,A_2$ are defined  in \eqref{AoneAtwo}, then we have that
\begin{align*}
\|w_{0}(t) - w_{0}^{p}(t)\| & \leq
	C_1A_1A_2\left(\prod_{j=1}^{p}\alpha_{j}\right)\frac{(t-\Delta t_{p})^{p+1}}{(p+1)!}=M_4^{p}(t).
\end{align*}
\end{proof}
We notice that if the effective memory band at each 
level decreases as we increase the differentiation 
order $p$, then we can control the error $\|w_0(t)-w_0^n(t)\|$. 
The following corollary provides a sufficient condition that 
guarantees this sort 
of control of the error.

\begin{corollary}\label{coro_Type1}
{\bf (Uniform convergence of the Type-\rom{1} FMA)}
 If $\alpha_j$ in Therorem \ref{Thm_type_1} satisfy 
\begin{align}\label{convergence_condition1}
\alpha_j<(j+1)\left[\frac{\delta j!}{ C_1A_1A_2\left(\prod_{k=1}^{j-1}\alpha_k\right)}\right]^{-\frac{1}{j}}\qquad 1\leq j\leq n
\end{align} 
then for any $T>0$ and $\delta>0$, 
there exists an ordered sequence $\Delta t_n<\Delta t_{n-1}<\dots <\Delta t_1<T$ such that 
\begin{align*}
\|w_0(T)-w_0^{p}(T)\|\leq \delta, \quad 1\leq p\leq n,
\end{align*}
and which satisfies
\begin{align}
\label{delta_t_n_upper_bound}
\Delta t_p\leq T-\left[\frac{\delta(p+1)!}{ C_1A_1A_2\left(\prod_{j=1}^{p}\alpha_j\right)}\right]^{\frac{1}{p+1}}.
\end{align}
\end{corollary}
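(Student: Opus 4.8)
The plan is to reduce everything to the single-level estimate already proved in Theorem~\ref{Thm_type_1} and then pick each memory band $\Delta t_p$ at the threshold that makes that level's bound equal to $\delta$. Writing $D=C_1A_1A_2$ and evaluating the bound of Theorem~\ref{Thm_type_1} at $t=T$ gives
\[
\|w_0(T)-w_0^p(T)\|\le D\Big(\prod_{i=1}^p\alpha_i\Big)\frac{(T-\Delta t_p)^{p+1}}{(p+1)!}.
\]
The requirement that this be at most $\delta$ is equivalent to bounding the discarded memory length from above, namely $T-\Delta t_p\le\beta_p$, where
\[
\beta_p:=\left[\frac{\delta(p+1)!}{D\prod_{i=1}^p\alpha_i}\right]^{\frac{1}{p+1}}.
\]
Choosing the largest discarded length allowed, i.e.\ setting $\Delta t_p=T-\beta_p$, realizes equality in the error bound, guarantees $\|w_0(T)-w_0^p(T)\|\le\delta$ for every $1\le p\le n$, and saturates the claimed relation \eqref{delta_t_n_upper_bound}. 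Since $\beta_1>0$ we get $\Delta t_1<T$ for free; in the degenerate case $\beta_p\ge T$ one clamps $\Delta t_p=\max(0,T-\beta_p)=0$, and then the estimate collapses to the $H$-model bound $M_3^p(T)\le\delta$, so accuracy is retained.

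The crux is to check that this choice yields a strictly ordered sequence $\Delta t_n<\dots<\Delta t_1$, equivalently that $\beta_p$ is strictly increasing in $p$. For this I would extract a one-step recursion by peeling off the last factor of the product:
\[
\beta_p^{\,p+1}=\frac{\delta(p+1)!}{D\prod_{i=1}^p\alpha_i}=\frac{p+1}{\alpha_p}\cdot\frac{\delta\,p!}{D\prod_{i=1}^{p-1}\alpha_i}=\frac{p+1}{\alpha_p}\,\beta_{p-1}^{\,p},
\]
where I recognize the trailing factor as $\beta_{p-1}^{\,p}$. Since every quantity is positive, $\beta_p>\beta_{p-1}$ holds iff $\beta_p^{\,p+1}>\beta_{p-1}^{\,p+1}$, i.e.\ iff $(p+1)/\alpha_p>\beta_{p-1}$, i.e.\ iff $\alpha_p<(p+1)/\beta_{p-1}$. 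Writing $\beta_{p-1}$ out explicitly, this last inequality is exactly hypothesis \eqref{convergence_condition1} at $j=p$. Hence the convergence condition \eqref{convergence_condition1} is nothing but the monotonicity of the thresholds $\beta_p$, which in turn gives the required ordering of the bands $\Delta t_p=T-\beta_p$ together with $\Delta t_1<T$.

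The main obstacle here is algebraic rather than analytic: it is recognizing the self-referential structure of \eqref{convergence_condition1}, namely that the bracketed quantity bounding $\alpha_j$ is precisely $\beta_{j-1}^{-1}$, so that the hypothesis reads simply $\alpha_j<(j+1)/\beta_{j-1}$. Once this is spotted, the recursion closes the argument immediately. The only remaining bookkeeping is to confirm that the strict inequalities in \eqref{convergence_condition1} propagate to strict ordering of the $\Delta t_p$, and to treat the boundary case $\beta_p\ge T$ through the clamp $\Delta t_p=\max(0,T-\beta_p)$, using that $\beta_p\ge T$ is exactly the statement $M_3^p(T)\le\delta$ so that a collapsed band loses no accuracy.
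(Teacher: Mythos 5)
Your proposal is correct and follows essentially the same route as the paper: evaluate the Theorem \ref{Thm_type_1} bound at $t=T$, set it equal to $\delta$ to define $\Delta t_p = T-\beta_p$, and verify that hypothesis \eqref{convergence_condition1} is precisely the condition $\alpha_p < (p+1)/\beta_{p-1}$ making the thresholds $\beta_p$ strictly increasing, hence the bands strictly decreasing. Your explicit recursion $\beta_p^{\,p+1} = \tfrac{p+1}{\alpha_p}\beta_{p-1}^{\,p}$ spells out what the paper dismisses as ``easy to check,'' and your clamping remark for $\beta_p \geq T$ addresses a degenerate case the paper silently assumes away (it asserts $0<\Delta t_n$ without justification), so the write-up is if anything slightly more careful.
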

\begin{proof}
For $1\leq p\leq n$ we set 
\begin{align*}
\|w_{0}(t) - w_{0}^{p}(t)\| & \leq
	C_1A_1A_2\left(\prod_{j=1}^{p}\alpha_{j}\right)\frac{(t-\Delta t_{p})^{p+1}}{(p+1)!}\leq \delta.
\end{align*}
This yields the following requirement on $\Delta t_p$
\begin{align}\label{delta_t_n_bound}
\Delta t_p\geq T-\left[\frac{\delta(p+1)!}{ C_1A_1A_2\left(\prod_{j=1}^{p}\alpha_j\right)}\right]^{\frac{1}{p+1}}.
\end{align}
Since hypothesis \eqref{convergence_condition1} holds, it is easy to check that the lower bound on each $\Delta t_p$  satisfies
\begin{align*}
 T-\left[\frac{\delta(p+1)!}{ C_1A_1A_2\left(\prod_{j=1}^{p}\alpha_j\right)}\right]^{\frac{1}{p+1}}
<
  T-\left[\frac{\delta p!}{ C_1A_1A_2\left(\prod_{j=1}^{p-1}\alpha_j\right)}\right]^{\frac{1}{p}}\qquad \Delta t_p > \Delta t_{p-1}.
\end{align*}
Therefore, by using the equality in \eqref{delta_t_n_bound} to define a sequence of $\Delta t_{n}$, we find that it is a decreasing time sequence $0<\Delta t_n<\Delta t_{n-1}<\dots <\Delta t_1<T$ such that $\|w_0(T)-w_0^n(T)\|\leq \delta$ holds for all $t\in[0,T]$ and which satisfies \eqref{delta_t_n_upper_bound}.
\end{proof}
\paragraph{Remark} The sufficient condition provided in 
Corollary \ref{coro_Type1} guarantees {\em uniform convergence} of 
the Type-\rom{1} finite memory approximation. 
If we replace condition  \eqref{convergence_condition1} with 
\begin{align*}
\alpha_j<C,\qquad \textrm{for all }\qquad 1\leq j<+\infty,
\end{align*}
where $C$ is a positive constant (independent on $T$), 
then we obtain asymptotic convergence. In other words, 
for each $\delta>0$, there exists an integer $p$ such that for all $n>p$
we have $\left\|w_0(t)-w_0^n(t)\right\|<\delta$.
This result is based on the limit  
\begin{align*}
\lim_{p\rightarrow+\infty}\frac{\delta(p+1)!}{C_1A_1A_2\left(\prod_{j=1}^{p}\alpha_j\right)}
>
\lim_{p\rightarrow+\infty}\frac{\delta(p+1)!}{C_1A_1A_2C^p}=+\infty
\end{align*}
which guarantees the existence of an integer $p$ for which the upper bound on $\Delta t_p$ is smaller or equal to zero. In such case, the Type \rom{1} FMA degenerates to the $H$-model, for which Corollary 
\ref{cor_decaying_1} holds.

\subsubsection{Type-\rom{2} Finite Memory Approximation}
\label{sec:finmemPhase_space}
The Type-\rom{2} finite memory approximation is obtained by solving 
the system \eqref{hier_equation1} with $w_n^{e_n}(t)$ given in 
\eqref{type_2}. We first derive an upper bound 
for $\|w_0(t)-w_0^n(t)\|$ and then discuss sufficient conditions for convergence.
\begin{theorem}\label{finite_memory_approximation}
{\bf (Accuracy of the Type-\rom{2} FMA)}
Let $e^{t\mathcal{L}}$ and $e^{t\L\Q}$ be 
strongly continuous semigroups with upper bounds 
$\|e^{t\L}\|\leq Me^{t\omega}$ and 
$\|e^{t\L\Q}\|\leq M_\Q e^{t\omega_\Q}$.
If  
\begin{align}
\alpha_{j} = \frac{\|(\mathcal{LQ})^{j+1}\L u_0\|}{\|
(\mathcal{LQ})^{j}\L u_0\|}, \quad 1\leq j\leq n,
\label{condition1-2}
\end{align}
then for $1\leq p\leq n$
\begin{align*}
\|w_0(t)-w_0^{p}(t)\|\leq M_5^{p}(t),
\end{align*}
where $$\displaystyle M_5^{p}(t)= C_{1}\left(\prod_{j=1}^{p}\alpha_{j}\right)f_{p}(\omega_{\PrjC},t)h(\omega - \omega_{\PrjC},t_{p}),$$ 
\begin{align*}
f_{p}(\omega_{\PrjC},t) & = \int_{0}^{t}\frac{(t-\sigma)^{p-1}}{(p-1)!}e^{\sigma\omega_{\PrjC}}d\sigma, & 	h(\omega - \omega_{\PrjC},t_{p}) & = \int_{0}^{t_{p}}  e^{s(\omega-\omega_{\PrjC})} ds,
\end{align*}
and $C_1=MM_{\PrjC}\|\P\|^2\|(\LV\PrjC)^{p+1}\LV u_{0}\|$.
\end{theorem}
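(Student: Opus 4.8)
The plan is to re-run the backward-substitution argument of Theorem \ref{Thm_type_1}, the only change being the shape of the error introduced at level $p$ by the Type-\rom{2} rule \eqref{type_2}. First I would note that, since \eqref{type_2} keeps the $p$-th memory integral only on $[\min(t,t_p),t]$, the error at that level is the complementary piece
\begin{align*}
w_{p}(t)-w_{p}^{e_{p}}(t)=\int_{0}^{\min(t,t_{p})}\P e^{s\L}\P e^{(t-s)\L\Q}(\L\Q)^{p+1}\L u_{0}\,ds,
\end{align*}
where I have used the intertwining identities $\L e^{(t-s)\Q\L}=e^{(t-s)\L\Q}\L$ and $\L(\Q\L)^{p+1}=(\L\Q)^{p+1}\L$ to put the integrand in the form appearing in \eqref{condition1-2}. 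Substituting this single-level error back through the $p$ nested integrations of the hierarchy \eqref{hier_equation1}, exactly as in Theorem \ref{Thm_type_1}, gives
\begin{align*}
w_{0}(t)-w_{0}^{p}(t)=\int_{0}^{t}\int_{0}^{\tau_{p}}\cdots\int_{0}^{\tau_{2}}\bigl[w_{p}(\tau_{1})-w_{p}^{e_{p}}(\tau_{1})\bigr]\,d\tau_{1}\cdots d\tau_{p}.
\end{align*}

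I would then collapse the $p$-fold iterated integral in the outer variables $\tau_{1},\dots,\tau_{p}$ using Cauchy's formula for repeated integration, turning it into the single integral
\begin{align*}
w_{0}(t)-w_{0}^{p}(t)=\int_{0}^{t}\frac{(t-\sigma)^{p-1}}{(p-1)!}\int_{0}^{\min(\sigma,t_{p})}\P e^{s\L}\P e^{(\sigma-s)\L\Q}(\L\Q)^{p+1}\L u_{0}\,ds\,d\sigma.
\end{align*}
Taking norms, inserting $\|e^{s\L}\|\leq Me^{s\omega}$, $\|e^{(\sigma-s)\L\Q}\|\leq M_{\Q}e^{(\sigma-s)\omega_{\Q}}$ and two factors of $\|\P\|$, and regrouping the exponent as $e^{s\omega}e^{(\sigma-s)\omega_{\Q}}=e^{\sigma\omega_{\Q}}e^{s(\omega-\omega_{\Q})}$ separates the $\sigma$-dependence from the $s$-dependence.

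The one genuinely new point --- and what makes the Type-\rom{2} bound factor as a product $f_{p}\cdot h$ rather than fuse into a single $(t-\Delta t_{p})^{p+1}$ term as in Theorem \ref{Thm_type_1} --- is that the inner upper limit $\min(\sigma,t_{p})$ never exceeds the fixed constant $t_{p}$. Since $e^{s(\omega-\omega_{\Q})}>0$, enlarging the inner range to $[0,t_{p}]$ only increases the integral,
\begin{align*}
\int_{0}^{\min(\sigma,t_{p})}e^{s(\omega-\omega_{\Q})}\,ds\leq\int_{0}^{t_{p}}e^{s(\omega-\omega_{\Q})}\,ds=h(\omega-\omega_{\Q},t_{p}),
\end{align*}
and the resulting bound is independent of $\sigma$, so it pulls out of the outer integral, leaving precisely
\begin{align*}
\int_{0}^{t}\frac{(t-\sigma)^{p-1}}{(p-1)!}e^{\sigma\omega_{\Q}}\,d\sigma=f_{p}(\omega_{\Q},t).
\end{align*}
Collecting the constants gives $\|\P\|^{2}MM_{\Q}\|(\L\Q)^{p+1}\L u_{0}\|\,f_{p}(\omega_{\Q},t)\,h(\omega-\omega_{\Q},t_{p})$, which is the claimed $M_{5}^{p}(t)$ once the factor $\|(\L\Q)^{p+1}\L u_{0}\|$ is expanded through the telescoping product $\prod_{j=1}^{p}\alpha_{j}$ of the ratios \eqref{condition1-2}. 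I do not expect a substantive obstacle here: this is a controlled variant of Theorem \ref{Thm_type_1}, the only points requiring care being the correct inner limit $\min(\sigma,t_{p})$ and the observation that the sign of $\omega-\omega_{\Q}$ is immaterial to the positivity used in bounding the inner integral by $h$.
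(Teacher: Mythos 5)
Your proposal is correct and follows essentially the same route as the paper's own proof: the same backward substitution through the hierarchy, the same collapse of the outer $p$-fold integral via Cauchy's formula for repeated integration, and the same enlargement of the inner limit $\min(\sigma,t_p)$ to $t_p$ to factor the bound as $f_p(\omega_{\Q},t)\,h(\omega-\omega_{\Q},t_p)$. Your reading of the constant, namely that $\|\P\|^2 MM_{\Q}\|(\L\Q)^{p+1}\L u_0\|$ equals $C_1\prod_{j=1}^p\alpha_j$ with $C_1= MM_{\Q}\|\P\|^2\|\L\Q\L u_0\|$ by telescoping, is the intended one (the paper's statement of $C_1$ in this theorem is a slight misprint that would otherwise double-count the product of the $\alpha_j$).
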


\begin{proof}
By following the same procedure as in the proof of the Theorem \ref{Thm_Decaying_error} we obtain
\begin{align*}
	w_{0}(t) - w_{0}^{p}(t) & = \int_{0}^{t}\int_{0}^{\tau_{p}}\cdots \int_{0}^{\tau_{2}}\int_{0}^{\min(\tau_{1},t_{p})} \P e^{s\L} \P e^{(\tau_{1}-s)\L\Q }(\L\Q)^{p+1}\L u_{0}dsd\tau_{1} \cdots d\tau_{p}.	
\end{align*}
By applying Cauchy's formula for repeated integration, 
this expression may be simplified to
\begin{align*}
	w_{0}(t) - w_{0}^{p}(t) & = \int_{0}^{t}\frac{(t-\sigma)^{p-1}}{(p-1)!}\int_{0}^{\min(\sigma,t_{p})} \P e^{s\L} \P e^{(\sigma-s)\L\Q }(\L\Q)^{p+1}\L u_{0} dsd\sigma.	
\end{align*}
Thus, 
\begin{align*}
\|w_{0}(t) - w_{0}^{p}(t)\| & \leq \int_{0}^{t}\frac{(t-\sigma)^{p-1}}{(p-1)!}\int_{0}^{\min(\sigma,t_{p})} \left\| \P e^{s\L} \P e^{(\sigma-s)\L\Q }(\L\Q)^{p+1}\L u_{0}\right\|dsd\sigma\\
	& \leq MM_{\PrjC}\|\P\|^2\|(\LV\PrjC)^{p+1}\LV u_{0}\|\int_{0}^{t}\frac{(t-\sigma)^{p-1}}{(p-1)!}\int_{0}^{t_{p}}  e^{s\omega} e^{(\sigma-s)\omega_{\PrjC}}dsd\sigma\\
	& \leq C_{1}\left(\prod_{j=1}^{p}\alpha_{j}\right)\left(\int_{0}^{t}\frac{(t-\sigma)^{p-1}}{(p-1)!}e^{\sigma\omega_{\PrjC}}d\sigma\right)\left(\int_{0}^{t_{p}}  e^{s(\omega-\omega_{\PrjC})} ds\right)\\
	& = C_{1}\left(\prod_{j=1}^{p}\alpha_{j}\right)f_{p}(\omega_{\PrjC},t)h(\omega - \omega_{\PrjC},t_{p})=M_5^{p}(t),
\end{align*}
where $C_1=MM_{\PrjC}\|\P\|^2\|(\LV\PrjC)^{p+1}\LV u_{0}\|$, 
\begin{align}\label{f_p}
	f_{p}(\omega_{\PrjC},t) & = \int_{0}^{t}\frac{(t-\sigma)^{p-1}}{(p-1)!}e^{\sigma\omega_{\PrjC}}d\sigma
	= \begin{cases}
		\displaystyle \frac{t^{p}}{p!} & \omega_{\PrjC} = 0\\
		\displaystyle\frac{1}{\omega_{\PrjC}^{p}}\left[e^{t \omega_{\PrjC}} - \sum_{k=0}^{p-1}\frac{(t \omega_{\PrjC})^{k}}{k!}\right] & \omega_{\PrjC} \neq 0
	\end{cases}
\end{align}
and
\begin{align*}
	h(\omega - \omega_{\PrjC},t_{p}) & : = \int_{0}^{t_{p}}  e^{s(\omega-\omega_{\PrjC})} ds = \begin{cases}
		\displaystyle t_{p} & \omega = \omega_{\PrjC}\\
		\displaystyle\frac{e^{t_{p}(\omega-\omega_{\PrjC})}- 1}{\omega - \omega_{\PrjC}} & \omega\neq \omega_{\PrjC}
	\end{cases}
\end{align*}
are both strictly increasing functions of $t$ and $t_{p}$, respectively. 
\end{proof}

\begin{corollary}\label{coro_type2}
{\bf (Uniform convergence of the Type-\rom{2} FMA})
If $\alpha_j$ in Theorem \ref{finite_memory_approximation} satisfy 
\begin{align}
\label{convergence_condition}
\alpha_j<\frac{j}{T} \quad (\omega_{\Q}=0) \qquad 
\textrm {or} \qquad 
\displaystyle \alpha_j< \omega_{\Q}\frac{e^{T\omega_{\Q}}- \displaystyle\sum_{k=0}^{j-2}\frac{(T\omega_{\Q})^k}{k!}}
{e^{T\omega_{\Q}}- \displaystyle\sum_{k=0}^{j-1}\frac{(T\omega_{\Q})^k}{k!}} \quad (\omega_{\Q}\neq 0)
\end{align}  
for $1\leq j\leq n$, then for any arbitrarily small $\delta>0$, 
there exists an ordered sequence $0<t_0<t_1<\dots <t_n\leq T$ such that
\begin{align*}
\|w_0(T)-w_0^{p}(T)\|\leq \delta,\quad 1\leq p\leq n
\end{align*}
and which satisfies
\begin{align*}
	t_{j} &\geq \begin{dcases}
		\frac{j! \delta}{C_{1}\left(\prod_{i=1}^{j}\alpha_{i}\right)T^{j}} & \omega_{\PrjC} = 0,\\
		\frac{\omega_{\PrjC}^{j}\delta}{ C_{1}\left(\prod_{i=1}^{j}\alpha_{i}\right)\left[e^{T \omega_{\PrjC}} - \sum_{k=0}^{j-1}\frac{(T \omega_{\PrjC})^{k}}{k!}\right]} & \omega_{\PrjC} \neq 0,
	\end{dcases}
\end{align*}
when $\omega = \omega_{\PrjC}$, and 
\begin{align*}
	t_{j} & \geq \begin{dcases}
		\frac{1}{\omega}\ln\left[1+\frac{j!\omega\delta}{ C_{1}\left(\prod_{i=1}^{j}\alpha_{i}\right)T^{j}}\right] & \omega_{\PrjC} = 0,\\
		\frac{1}{\omega-\omega_{\PrjC}}\ln\left[1+\frac{(\omega-\omega_{\PrjC})\omega_{\PrjC}^{j}\delta}{ C_{1} \left(\prod_{i=1}^{j}\alpha_{i}\right)\left[e^{T \omega_{\PrjC}} - \sum_{k=0}^{j-1}\frac{(T \omega_{\PrjC})^{k}}{k!}\right]}\right] & \omega_{\PrjC} \neq 0,
	\end{dcases}
\end{align*}
when $\omega\neq \omega_{\PrjC}$.
\end{corollary}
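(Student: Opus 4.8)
The plan is to mirror the proof of Corollary \ref{coro_Type1}, but now inverting the error bound $M_5^p$ of Theorem \ref{finite_memory_approximation} in the memory parameter $t_p$ rather than in $\Delta t_p$. First I would impose the target accuracy at the fixed final time for every $1\le p\le n$,
\begin{align*}
M_5^p(T)=C_1\Big(\prod_{j=1}^p\alpha_j\Big)f_p(\omega_{\Q},T)\,h(\omega-\omega_{\Q},t_p)\le\delta.
\end{align*}
Since Theorem \ref{finite_memory_approximation} records that $h(\omega-\omega_{\Q},\cdot)$ is strictly increasing with $h(\omega-\omega_{\Q},0)=0$, this inequality is equivalent to the single scalar constraint $h(\omega-\omega_{\Q},t_p)\le \delta\,[\,C_1(\prod_{j=1}^p\alpha_j)f_p(\omega_{\Q},T)\,]^{-1}$, so the accuracy requirement is met precisely when $t_p$ does not exceed a threshold $t_p^{\star}$ obtained by inverting $h$. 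Reading off the explicit form of $h$ produces exactly the four displayed expressions, the split being governed by whether $\omega_{\Q}=0$ or $\omega_{\Q}\ne0$ (which selects the two forms of $f_p$ from \eqref{f_p}) and by whether $\omega=\omega_{\Q}$ or $\omega\ne\omega_{\Q}$ (which makes $h^{-1}$ linear or logarithmic). Taking $t_p=t_p^{\star}$ makes the error equal to $\delta$ and reproduces the stated bounds on $t_j$.

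The crux is to show that $\{t_p^{\star}\}$ is strictly increasing, which is what guarantees that an ordered admissible sequence $0<t_0<t_1<\dots<t_n\le T$ exists. For this I would form the ratio $t_{p+1}^{\star}/t_p^{\star}$. Because the identity map and $y\mapsto(\omega-\omega_{\Q})^{-1}\ln(1+(\omega-\omega_{\Q})y)$ are both increasing in $y$ — indeed $\partial_y\big[(\omega-\omega_{\Q})^{-1}\ln(1+(\omega-\omega_{\Q})y)\big]=(1+(\omega-\omega_{\Q})y)^{-1}>0$, which disposes of the sign of $\omega-\omega_{\Q}$ and of the positivity of the logarithm's argument uniformly — monotonicity of $t_p^{\star}$ reduces to monotonicity in $p$ of $\omega_{\Q}^{p}\big[(\prod_{j=1}^p\alpha_j)f_p(\omega_{\Q},T)\big]^{-1}$ (respectively of $p!\,[(\prod_{j=1}^p\alpha_j)T^p]^{-1}$ when $\omega_{\Q}=0$). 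Using the closed form \eqref{f_p}, the telescoping factor $f_p(\omega_{\Q},T)/f_{p+1}(\omega_{\Q},T)$ equals $\omega_{\Q}\big[e^{T\omega_{\Q}}-\sum_{k=0}^{p-1}(T\omega_{\Q})^k/k!\big]\big[e^{T\omega_{\Q}}-\sum_{k=0}^{p}(T\omega_{\Q})^k/k!\big]^{-1}$ (and simply $(p+1)/T$ when $\omega_{\Q}=0$), so that $t_{p+1}^{\star}>t_p^{\star}$ is equivalent to $\alpha_{p+1}$ being smaller than exactly the right-hand side of hypothesis \eqref{convergence_condition} evaluated at $j=p+1$. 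This is the main obstacle: checking that the index-shifted condition \eqref{convergence_condition} is precisely the statement that consecutive thresholds increase, across all four case distinctions.

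Finally, having established $t_1^{\star}<t_2^{\star}<\dots<t_n^{\star}$, I would set $t_p=t_p^{\star}$, which by construction yields $\|w_0(T)-w_0^p(T)\|\le\delta$ for all $1\le p\le n$ and reproduces the displayed lower bounds with equality. Since for $\delta$ taken arbitrarily small every threshold $t_p^{\star}$ is made as small as desired, the whole strictly increasing sequence lies in $(0,T]$ (with an anchor $t_0\in(0,t_1^{\star})$), so the required ordered family is admissible. The explicit forms of $t_j^{\star}$ in the four cases are then read directly from the inverted relations, completing the argument.
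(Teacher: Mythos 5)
Your proposal is correct and follows essentially the same route as the paper's own proof: impose $M_5^{p}(T)\leq\delta$, invert the bound with respect to $t_p$ (splitting on $\omega_{\Q}=0$ versus $\omega_{\Q}\neq 0$ and $\omega=\omega_{\Q}$ versus $\omega\neq\omega_{\Q}$), observe that hypothesis \eqref{convergence_condition} is exactly what makes the resulting thresholds increase in $p$, and then take $t_p$ equal to the threshold so that the stated lower bounds hold with equality. The only difference is presentational --- you package the four cases as a single inversion of $h$ followed by a telescoping ratio of $f_p(\omega_{\Q},T)$, whereas the paper writes out each case (including the subcases $\omega>\omega_{\Q}$ and $\omega<\omega_{\Q}$) explicitly.
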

\begin{proof}
We now consider separately the two cases where $\omega = \omega_{\PrjC}$ and where $\omega\neq\omega_{\PrjC}$. 
If $\omega=\omega_{\PrjC}$, then
\begin{align*}
\|w_{0}(t) - w_{0}^{p}(t)\| & \leq C_{1}\left(\prod_{i=1}^{p}\alpha_{i}\right)\int_{0}^{t}\int_{0}^{\tau_{p}}\cdots \int_{0}^{\tau_{2}}\int_{0}^{t_{p}} e^{\tau_{1}\omega_{\PrjC}} e^{s(\omega - \omega_{\PrjC})}dsd\tau_{1}\cdots d\tau_{p}\\
	& = t_{p}C_{1}\left(\prod_{i=1}^{p}\alpha_{i}\right)\int_{0}^{t}\int_{0}^{\tau_{p}}\cdots \int_{0}^{\tau_{2}} e^{\tau_{1}\omega_{\PrjC}} d\tau_{1}\cdots d\tau_{p}\\
	& = t_{p}C_{1}\left(\prod_{i=1}^{p}\alpha_{i}\right)f_{p}(\omega_{\PrjC},t),
\end{align*}
where $f_{p}(\omega_{\PrjC},t)$ is defined in \eqref{f_p}.
To ensure that $\|w_{0}(t) - w_{0}^{p}(t)\| \leq \delta$ for all $0\leq t\leq T$, we can take
\begin{align*}
	t_{p}C_{1}\left(\prod_{i=1}^{p}\alpha_{i}\right)f_{p}(\omega_{\PrjC},T)=\max_{t\in[0,T]}t_{p}C_{1}\left(\prod_{i=1}^{p}\alpha_{i}\right)f_{p}(\omega_{\PrjC},t) \leq \delta,
\end{align*}
so that
\begin{align*}
	t_{p}  \leq  \frac{\delta}{\displaystyle C_{1}\left(\prod_{i=1}^{p}\alpha_{i}\right)f_{p}(\omega_{\PrjC},T)}
	 = \begin{dcases}
		\frac{p! \delta}{C_{1}\left(\prod_{i=1}^{p}\alpha_{i}\right)T^{p}} & \omega_{\PrjC} = 0,\\
		\frac{\omega_{\PrjC}^{p}\delta}{ C_{1}\left(\prod_{i=1}^{p}\alpha_{i}\right)\left[e^{T \omega_{\PrjC}} - \sum_{k=0}^{p-1}\frac{(T \omega_{\PrjC})^{k}}{k!}\right]} & \omega_{\PrjC} \neq 0.
	\end{dcases}
\end{align*}
On the other hand,  if $\omega \neq \omega_{\PrjC}$ then
\begin{align*}
\|w_{0}(t) - w_{0}^{p}(t)\| & \leq C_{1}\left(\prod_{i=1}^{p}\alpha_{i}\right)\int_{0}^{t}\int_{0}^{\tau_{p}}\cdots \int_{0}^{\tau_{2}}\int_{0}^{t_{p}} e^{\tau_{1}\omega_{\PrjC}} e^{s(\omega - \omega_{\PrjC})}dsd\tau_{1}\cdots d\tau_{p}\\
	& = \frac{e^{t_{p}(\omega-\omega_{\PrjC})}-1}{\omega-\omega_{\PrjC}}C_{1}\left(\prod_{i=1}^{p}\alpha_{i}\right)\int_{0}^{t}\int_{0}^{\tau_{p}}\cdots \int_{0}^{\tau_{2}} e^{\tau_{1}\omega_{\PrjC}} d\tau_{1} \cdots d\tau_{p}\\
	& = \frac{e^{t_{p}(\omega-\omega_{\PrjC})}-1}{\omega-\omega_{\PrjC}}C_{1}\left(\prod_{i=1}^{p}\alpha_{i}\right)f_{p}(\omega_{\PrjC},t).
\end{align*}
To ensure that $\|w_{0}(t) - w_{0}^{p}(t)\| \leq \delta$ for all $0\leq t\leq T$, we can take
\begin{align*}
	\frac{e^{t_{p}(\omega-\omega_{\PrjC})}-1}{\omega-\omega_{\PrjC}}C_{1}\left(\prod_{i=1}^{p}\alpha_{i}\right)f_{p}(\omega_{\PrjC},T) = \max_{t\in[0,T]} \frac{e^{t_{p}(\omega-\omega_{\PrjC})}-1}{\omega-\omega_{\PrjC}}C_{1}\left(\prod_{i=1}^{p}\alpha_{i}\right)f_{p}(\omega_{\PrjC},t) \leq \delta.
\end{align*}
Let us now consider the two cases $\omega > \omega_{\PrjC}$ and $\omega < \omega_{\PrjC}$ separately.
When $\omega > \omega_{\PrjC}$, we have
\begin{align*}
	e^{t_{p}(\omega-\omega_{\PrjC})} & \leq  1+\frac{(\omega-\omega_{\PrjC})\delta}{\displaystyle C_{1}\left(\prod_{i=1}^{p}\alpha_{i}\right)f_{p}(\omega_{\PrjC},T)},
\end{align*}
and
\begin{align*}	
	t_{p} & \leq \begin{dcases}
		\frac{1}{\omega}\ln\left[1+\frac{p!\omega\delta}{ C_{1}\left(\prod_{i=1}^{p}\alpha_{i}\right)T^{p}}\right] & \omega_{\PrjC} = 0,\\
		\frac{1}{\omega-\omega_{\PrjC}}\ln\left[1+\frac{(\omega-\omega_{\PrjC})\omega_{\PrjC}^{p}\delta}{  C_{1} \left(\prod_{i=1}^{p}\alpha_{i}\right)\left[e^{T \omega_{\PrjC}} - \sum_{k=0}^{p-1}\frac{(T \omega_{\PrjC})^{k}}{k!}\right]}\right] & \omega_{\PrjC} \neq 0.
	\end{dcases}
\end{align*}
On the other hand, when $\omega < \omega_{\PrjC}$, we have
\begin{align*}
	\frac{1-e^{-t_{p}(\omega_{\PrjC}-\omega)}}{\omega_{\PrjC}-\omega}C_{1}\left(\prod_{i=1}^{p}\alpha_{i}\right)f_{p}(\omega_{\PrjC},T) = \max_{t\in[0,T]} \frac{1-e^{-t_{p}(\omega_{\PrjC}-\omega)}}{\omega_{\PrjC}-\omega}C_{1}\left(\prod_{i=1}^{p}\alpha_{i}\right)f_{p}(\omega_{\PrjC},t) \leq \delta,
\end{align*}
so that
\begin{align*}
	e^{-t_{p}(\omega_{\PrjC}-\omega)} & \geq  1-\frac{(\omega_{\PrjC}-\omega)\delta}{\displaystyle C_{1}\left(\prod_{i=1}^{p}\alpha_{i}\right)f_{p}(\omega_{\PrjC},T)}
\end{align*}
i.e., 
\begin{align*}
	-t_{p}(\omega_{\PrjC}-\omega) & \geq \ln\left[1-\frac{(\omega_{\PrjC}-\omega)\delta}{ C_{1}\left(\prod_{i=1}^{p}\alpha_{i}\right)f_{p}(\omega_{\PrjC},T)}\right].
\end{align*}
Hence,
\begin{align*}
	t_{p} & \leq \begin{dcases}
		\frac{1}{\omega}\ln\left[1-\frac{p!(-\omega)\delta}{  C_{1}\left(\prod_{i=1}^{p}\alpha_{i}\right)T^{p}}\right] & \omega_{\PrjC} = 0,\\
		-\frac{1}{\omega_{\PrjC}-\omega}\ln\left[1-\frac{(\omega_{\PrjC}-\omega)\omega_{\PrjC}^{p}\delta}{ C_{1}\left(\prod_{i=1}^{p}\alpha_{i}\right)\left[e^{T \omega_{\PrjC}} - \sum_{k=0}^{p-1}\frac{(T \omega_{\PrjC})^{k}}{k!}\right]}\right] & \omega_{\PrjC} \neq 0.
	\end{dcases}
\end{align*}
For all the four cases, if $\omega_{\Q}=0$ then 
we have condition $\alpha_p<p/T$, and the upper bound of the time sequence satisfies:
\begin{align*}
\frac{p!\delta}{  C_1\left(\prod_{i=1}^p\alpha_i\right)T^p}<\frac{(p-1)!\delta}{  C_1\left(\prod_{i=1}^{p-1}\alpha_i\right)T^{p-1}}, \qquad & p\geq 2.
\end{align*} 
If  $\omega_{\Q}\neq 0$ then we have the condition
\begin{align*}
\alpha_p< \omega_{\Q}\frac{\displaystyle  e^{T\omega_{\Q}}-\sum_{k=0}^{p-2}\frac{(T\omega_{\Q})^k}{k!}}
{\displaystyle  e^{T\omega_{\Q}}-\sum_{k=0}^{p-1}\frac{(T\omega_{\Q})^k}{k!}}
\end{align*}
and the upper bound of the time sequence satisfies
\begin{align*}
\frac{\omega_{\Q}^{p-1}}
{\displaystyle \left(e^{T\omega_{\Q}}-\sum_{k=0}^{p-2}\frac{(T\omega_{\Q})^k}{k!}\right)\prod_{i=1}^{p-1}\alpha_i}
<
\frac{\omega_{\Q}^{p}}
{\displaystyle \left(e^{T\omega_{\Q}}-\sum_{k=0}^{p-1}\frac{(T\omega_{\Q})^k}{k!}\right)\prod_{i=1}^{p}\alpha_i}, 
\qquad & p\geq 2.
\end{align*}
Therefore, there always exists a increasing time sequence $0<t_1<\dots <t_n$ such that $\|w_{0}(t) - w_{0}^{p}(t)\| \leq \delta$ for all $0\leq t\leq T$. And since we have proved that this $\delta$-bound on the error holds for all $t_{n}$ upper bounded as in the two cases above, there exists such an increasing time sequence $0<t_1<\dots <t_n$ with $t_{n}$ \emph{lower}-bounded by the same quantities. Indeed, because of the coarseness of the approximations applied in the proof, there may exist such a time sequence with significantly larger $t_{i}$.
\end{proof}

\paragraph{Remark}
If we replace (\ref{convergence_condition}) with the stronger 
condition    
\begin{align}\label{stronger_convergence_condition}
&\begin{dcases}
\alpha_j<\frac{j}{\epsilon T}, \quad &\omega_{\Q}=0 \\
\alpha_j<\frac{1}{\epsilon T}, \quad &\omega_{\Q}\neq 0 
\end{dcases}& & 1\leq j< \infty,
\end{align} 
where $\epsilon$ is some arbitrary constant satisfying $\epsilon >1$, then we 
have
\begin{align*}
	\lim_{j\rightarrow+\infty}t_{j} &\geq \lim_{j\rightarrow+\infty}\begin{dcases}
		\frac{j! \delta}{ C_{1}\left(\prod_{i=1}^{j}\alpha_{i}\right)T^{j}}\geq \frac{\delta}{C_1}\epsilon^j=+\infty & \omega_{\PrjC} = 0,\\
		\frac{\omega_{\PrjC}^{j}\delta}{ C_{1}\left(\prod_{i=1}^{j}\alpha_{i}\right)\left[e^{T \omega_{\PrjC}} - \sum_{k=0}^{j-1}\frac{(T \omega_{\PrjC})^{k}}{k!}\right]}
= 	
\frac{\omega_{\PrjC}^{j}\delta}{  C_{1}\left(\prod_{i=1}^{j}\alpha_{i}\right)o(T^j\omega_{\Q}^j)}= +\infty
		 & \omega_{\PrjC} \neq 0.
	\end{dcases}
\end{align*}
for $\omega=\omega_{\Q}$ and 
\begin{align*}
	\lim_{j\rightarrow+\infty}t_{j} & \geq \lim_{j\rightarrow+\infty}\begin{dcases}
	\frac{j}{\omega}\ln\left[\frac{\delta}{C_1}\omega\epsilon\right]	=+\infty
		 & \omega_{\PrjC} = 0,\\
\frac{j}{\omega-\omega_{\Q}}\ln\left[\frac{(\omega-\omega_{\Q})T^j}{C_1o(T^j)}\right]=+\infty				
		& \omega_{\PrjC} \neq 0.
	\end{dcases}
\end{align*}
Hence, there exists a $j$ such that the upper bound for $t_j$ is greater than or equal 
to $T$. For such case, the Type \rom{2} FMA degenerates to the truncation 
approximation ($H$-model), for which Corollary \ref{cor_decaying_1} grants 
us asymptotic convergence.

\subsubsection{\texorpdfstring{$H_t$}{Ht}-model}
The $H_t$-model is obtained by solving 
the system \eqref{hier_equation1} with $w_n^{e_n}(t)$ 
approximated using Chorin's $t$-model \cite{Chorin1} 
(see equation \eqref{Ht_state}). Convergence analysis can be 
performed by using the mathematical methods we employed 
for the proofs of the $H$-model.  Note that the classical $t$-model is equivalent to a zeroth-order $H_t$-model.
\begin{theorem}\label{Thm_Decaying_error_Ht}
{\bf (Accuracy of the $H_t$-model)}
Let $e^{t\mathcal{L}}$ and $e^{t\L\Q}$ be 
strongly continuous semigroups with upper bounds 
$\|e^{t\L}\|\leq Me^{t\omega}$ and $\|e^{t\L\Q}\|\leq M_{{\Q}}e^{t
\omega_{\Q}}$, and let $T>0$ be a fixed integration time.  For some fixed $n$, let 
\begin{align}
\alpha_{j} = \frac{\|(\mathcal{LQ})^{j+1}\L u_0\|}{\|
(\mathcal{LQ})^{j}\L u_0\|}, \quad 1\leq j\leq n.
\label{conditionHt}
\end{align}
Then, for any $1\leq p\leq n$ and all $t\in[0,T]$, we have
\begin{align*}
\|w_0(t)-w_0^{p}(t)\|\leq M_6^{p}(t) \leq M_{6}^{p}(T),
\end{align*}
where 
\begin{align*}	
M_6^{p}(t)=C_4\left(\prod_j^p\alpha_j\right)\frac{t^{p+1}}{(p+1)!},
\qquad C_4=\left[C_1A_1A_2+\frac{C_1}{M_{\Q}A_3}\right],\qquad 
A_3:=\max_{s\in[0,T]}se^{s\omega}=\begin{cases}
1\quad &\omega\leq 0,\\
e^{T\omega}\quad &\omega>0
\end{cases},
\end{align*}
and $C_1$, $A_1$, $A_2$ are the same as before.
\end{theorem}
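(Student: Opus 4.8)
The plan is to treat the $H_t$-model error as a $t$-model error sitting at the top ($p$-th) level of the Dyson hierarchy, and then to transport that residual down to the zeroth level through the $p$ nested integrations, reusing almost verbatim the estimates already established in Theorem \ref{t-model_estimation} and Theorem \ref{Thm_Decaying_error}. First I would write, exactly as in the proof of Theorem \ref{Thm_Decaying_error}, the zeroth-level error as the $p$-fold integral of the top-level residual,
\[
w_0(t)-w_0^{p}(t)=\int_0^t\int_0^{\tau_p}\cdots\int_0^{\tau_2}\big[w_p(\tau_1)-w_p^{e_p}(\tau_1)\big]\,d\tau_1\cdots d\tau_p,
\]
which is legitimate because the truncated hierarchy \eqref{hier_equation1} and the exact hierarchy \eqref{hier_equation} differ only in the closure term at level $p$. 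For the $H_t$-model this closure is \eqref{Ht_state}, so the residual is precisely a $t$-model error one level up,
\[
w_p(\tau_1)-w_p^{e_p}(\tau_1)=\int_0^{\tau_1}\P e^{s\L}\P e^{(\tau_1-s)\L\Q}(\L\Q)^{p+1}\L u_0\,ds-\tau_1\,\P e^{\tau_1\L}\P(\L\Q)^{p+1}\L u_0,
\]
where I have used the intertwining identities $\P\L e^{(\tau_1-s)\Q\L}=\P e^{(\tau_1-s)\L\Q}\L$ and $\L(\Q\L)^{p+1}=(\L\Q)^{p+1}\L$ to bring the operators into the ordering of Theorem \ref{t-model_estimation}.

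Next I would bound this residual exactly as in Theorem \ref{t-model_estimation}, splitting it into a genuine memory contribution and a $t$-model correction,
\[
\|w_p(\tau_1)-w_p^{e_p}(\tau_1)\|\leq\Big(\prod_{j=1}^p\alpha_j\Big)\Big(C_1\int_0^{\tau_1}e^{s\omega}e^{(\tau_1-s)\omega_{\Q}}\,ds+\frac{C_1}{M_{\Q}}\tau_1 e^{\tau_1\omega}\Big),
\]
where the product $\prod_{j=1}^p\alpha_j$ arises by telescoping \eqref{conditionHt}, since $\|(\L\Q)^{p+1}\L u_0\|=\big(\prod_{j=1}^p\alpha_j\big)\|\L\Q\L u_0\|$, and $C_1=MM_{\Q}\|\P\|^2\|\L\Q\L u_0\|$. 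I would then substitute this into the nested integral and collapse it with Cauchy's formula for repeated integration to the single-integral form
\[
\|w_0(t)-w_0^{p}(t)\|\leq\Big(\prod_{j=1}^p\alpha_j\Big)\int_0^t\frac{(t-\sigma)^{p-1}}{(p-1)!}\Big(C_1\int_0^{\sigma}e^{s\omega}e^{(\sigma-s)\omega_{\Q}}\,ds+\frac{C_1}{M_{\Q}}\sigma e^{\sigma\omega}\Big)d\sigma .
\]

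The two contributions are then bounded separately. The first is exactly the function $f_p(t,\omega,\omega_{\Q})$ already controlled in Theorem \ref{Thm_Decaying_error}, so it contributes $C_1A_1A_2\,t^{p+1}/(p+1)!$. For the second I would bound only the exponential, $e^{\sigma\omega}\leq A_3$, while deliberately leaving the algebraic factor $\sigma$ inside the integrand, giving
\[
\frac{C_1}{M_{\Q}}\int_0^t\frac{(t-\sigma)^{p-1}}{(p-1)!}\sigma e^{\sigma\omega}\,d\sigma\leq\frac{C_1A_3}{M_{\Q}}\int_0^t\frac{(t-\sigma)^{p-1}}{(p-1)!}\sigma\,d\sigma=\frac{C_1A_3}{M_{\Q}}\frac{t^{p+1}}{(p+1)!}.
\]
Adding the two pieces yields $M_6^{p}(t)=C_4\big(\prod_{j=1}^p\alpha_j\big)t^{p+1}/(p+1)!$ with $C_4=C_1A_1A_2+C_1A_3/M_{\Q}$, and since $t\mapsto M_6^p(t)$ is increasing on $[0,T]$ the bound $M_6^p(t)\leq M_6^p(T)$ is immediate.

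The one step I expect to require genuine care is the bookkeeping that forces both contributions to carry the same factor $t^{p+1}/(p+1)!$. The memory term produces this power automatically, but the $t$-model correction comes with an explicit factor $\tau_1$; if that factor were absorbed into the supremum together with the exponential (as $\max_s se^{s\omega}$), the repeated integration would return only $t^{p}/p!$ and break the clean form of $M_6^p$. Keeping $\sigma$ in the integrand and majorizing only $e^{\sigma\omega}$ by $A_3$ is precisely what supplies the missing power of $t$ and the matching factorial, which is the reason $A_3$ must be built from the exponential alone. Everything else is a direct transcription of the estimates in Theorem \ref{t-model_estimation} and Theorem \ref{Thm_Decaying_error}.
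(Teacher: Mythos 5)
Your proposal is correct and follows essentially the same route as the paper's own proof: write $w_0(t)-w_0^{p}(t)$ as the $p$-fold iterated integral of the top-level $t$-model residual, split it by the triangle inequality, bound the memory part exactly as in Theorem \ref{Thm_Decaying_error} (yielding $C_1A_1A_2\,t^{p+1}/(p+1)!$), and bound the correction via Cauchy's repeated-integration formula while keeping the factor $\sigma$ inside the integrand and majorizing only $e^{\sigma\omega}\leq A_3$. Note also that your constant $C_4=C_1A_1A_2+C_1A_3/M_{\Q}$ (with $A_3$ in the numerator) is what the paper's proof actually derives; the form $C_1/(M_{\Q}A_3)$ in the theorem statement, like the stray factor $s$ in its displayed definition of $A_3$, is a typo, and your remark about building $A_3$ from the exponential alone correctly pinpoints why.
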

\begin{proof}
For $p$-th order $H_t$-model, the difference between the memory 
term $w_{0}$ and its approximation $w_{0}^{p}$ is  
\begin{align}\label{H_t_formula}
w_{0}(t) - w_{0}^{p}(t) & = \int_{0}^{t}\int_{0}^{\tau_{p}}\cdots 
\int_{0}^{\tau_{2}}\left[\int_{0}^{\tau_{1}} \P e^{s\L} \P 
e^{(\tau_{1}-s)\L\Q }(\L\Q)^{p+1}\L u_{0}ds-\tau_1\P e^{\tau_1\L}
\P(\L\Q)^{p+1}\L u_0\right]d\tau_{1}\cdots d\tau_{p}.	
\end{align}
Using Cauchy's formula for repeated integration, 
we can bound the norm of the second term in \eqref{H_t_formula} as
\begin{align}\label{esti1}
\left\|\int_{0}^{t}\frac{(t-\sigma)^{p-1}}{(p-1)!}\sigma\P e^{\sigma\L}
\P(\L\Q)^{p+1}\L x_0 d\sigma \right\|&\leq 
\int_{0}^{t}\frac{(t-\sigma)^{p-1}}{(p-1)!}\|\sigma\P e^{\sigma\L}\P(\L
\Q)^{p+1}\L x_0\| d\sigma		
\nonumber\\
 & \leq \|\Prj\|^{2}M\|(\LV\PrjC)^{p+1}\LV u_{0}\|
 \underbrace{\int_{0}^{t}\frac{(t-\sigma)^{p-1}}{(p-1)!}\sigma 
 e^{\sigma\omega} d\sigma}_{g_p(t,\omega)}\nonumber\\
 &=\frac{C_1}{M_{\Q}}\left(\prod_{j=1}^p\alpha_j\right)g_p(t,\omega),
\end{align}
where $C_1=\|\P\|^2\|\L\Q\L u_0\|MM_{\Q}$ as before. The function $g_p(t,\omega)$, may be bounded from above as
\begin{align*}
g_p(t,\omega)&\leq A_3\int_{0}^{t}\frac{(t-\sigma)^{p-1}}{(p-1)!}\sigma  d\sigma=A_3\frac{t^{p+1}}{(p+1)!},\quad A_3:=\max_{s\in[0,T]}e^{s\omega}=
\begin{cases}
1\quad &\omega\leq 0\\
e^{T\omega}\quad &\omega>0
\end{cases}.
\end{align*}
By applying the triangle inequality to \eqref{H_t_formula}, and taking  \eqref{esti1} into account, we obtain  
\begin{align*}
\|w_{0}(t) - w_{0}^{p}(t)\| & \leq C_1A_1A_2
	\left(\prod_{j=1}^p\alpha_j\right)
	\frac{t^{p+1}}{(p+1)!}+\frac{C_1}{M_{\Q}}A_3\left(\prod_{j=1}^p\alpha_j\right)\frac{t^{p+1}}{(p+1)!}=M_6^p(t).
\end{align*}
\end{proof}
One can see that the upper bounds $M_6^p(t)$ and $M_3^{p}(t)$ (see 
Theorem \ref{Thm_Decaying_error}) share the same structure, the only 
difference being the constant out front.  Hence by changing $C_2$ to 
$C_4$, we can prove of a series of corollaries similar to 
\ref{coro_state_sequence}, \ref{cor_decaying_1}, and 
\ref{cor_decaying}. In summary, what holds for the $H$-model also holds 
for the $H_t$-model. For the sake of brevity, we omit the statement and 
proofs of those corollaries.

\subsection{Linear Dynamical Systems}
\label{sec:linearDyn}
\red{
The upper bounds we obtained above are not easily computable for general nonlinear systems and infinite-rank projections, e.g., Chorin's projection \eqref{Chorin_projection}. However, if the dynamical system is linear, then such upper bounds are explicitly computable and convergence of the $H$-model can be established for linear phase space functions in any finite integration time $T$.} To this end, consider the linear system $\dot x=A x$ with random initial condition $x(0)$ sampled from the joint probability density function
\begin{equation}
\rho_0(x_0) = \delta (x_{01}- x_1(0)) 
\prod_{j=2}^N {\rho}_{0j}(x_{0j}).
\end{equation}  
In other words, the initial condition for the quantity of 
interest $u(x)=x_1(t)$ is 
set to be deterministic, while all other variables $x_2,\dots ,x_N$ are 
zero-mean and statistically independent at $t=0$\footnote{These choices for $\rho_{0}$ are merely for convenience in demonstrating important features.  With a more general choice of $\rho_{0}$, it is convenient to represent $\L$, $\P$, and $\Q$ in terms of an orthonormal basis for $V$ with respect to the $\rho_{0}$ inner product.  Then, e.g., operator norms within the invariant subspace reduce to matrix norms of the associated matrix.}. Here we assume for simplicity that 
$\rho_{0j}$ ($j=2,..,N$) are i.i.d. standard normal distributions. Observe that the Liouville operator associated with the linear system $\dot x=A x$ is 
\begin{align}
\label{linear_liouvillian}
\L=\sum_{i=1}^{N}\sum_{j=1}^N A_{ij}x_j\frac{\partial}{\partial x_i},
\end{align}
where $A_{ij}$ are the entries of the matrix $A$. If we choose observable $u=x_1(t)$, then Chorin's projection operator \eqref{9} yields the evolution equation for the conditional expectation $\mathbb{E}[x_1(t)|x_1(0)]$ , i.e., the {\em conditional mean path} \eqref{conditonal mean path}, which can be explicitly written as
\begin{equation}
\frac{d}{dt}\mathbb{E}[x_1|x_1(0)]=A_{11}\mathbb{E}[x_1|x_1(0)]+w_0(t),
\label{MZlinear}
\end{equation}
where $A_{11}=\P\L x_1(0)$ is the first entry of the matrix $A$, $w_0$ represents the memory integral \eqref{w0}. 
\red{Next, we explicitly compute the upper bounds for the memory growth and the error in the $H$-model for this system. To this end,  we first  notice that the domain of the Liouville operator can be  restricted to the linear space 
\begin{equation}
V=\textrm{span}\{x_1,\dots ,x_N\}.
\label{spaceV}
\end{equation}
 In fact, $V$ is invariant under $\L$, $\P$ and $\Q$, i.e., $\L V \subseteq V$, $\P V\subseteq V$ and $\Q V\subseteq V$.  These operators have the following matrix representations
\begin{align*}
	\L & \simeq A^{T} \simeq \left[\begin{array}{c c} a_{11} & {b}^{T}\\{a} & {M}_{11}^{T}	\end{array}\right],& 
	\P &\simeq \begin{bmatrix}1 & 0 & \cdots & 0\\0 & 0 & \cdots & 0\\\vdots & \vdots & \ddots & \vdots \\ 0 & 0 &\cdots & 0\end{bmatrix}, &
	 \Q &\simeq \begin{bmatrix} 0 & 0 & \cdots & 0\\0 & 1 & \cdots & 0\\\vdots & \vdots & \ddots & \vdots \\ 0 & 0 &\cdots & 1\end{bmatrix},
\end{align*}
where $M_{11}$ is the minor of the matrix of $A$ 
obtained by removing the first column and the first row, 
while  
\begin{align}
a =[A_{12}\cdots A_{1N}]^T, \qquad 
b^T =[A_{21}\cdots A_{N1}].
\label{ab}
\end{align}
Therefore,
\begin{align}
	\L\Q & \simeq 
	\left[\begin{array}{c c}
	0 & b^T \\
	\vspace{2pt}	{0} & {M}_{11}^{T}
\end{array}\right],\qquad
\L(\Q\L)^{n}x_{1}(0) \simeq 
				\left[\begin{array}{c}
				{b}^{T}\left({M}_{11}^{T}\right)^{n-1}{a} \\
				\left({M}_{11}^{T}\right)^{n}{a} 
				\end{array}\right].
\label{matrix_formula}
\end{align}
At this point, we set $x_{01}=x_1(0)$ and 
\begin{align*}
q(t,x_{01},\tilde x_0)=\int_0^t  e^{s\mathcal{L}}\mathcal{PL}
e^{(t-s)\mathcal{QL}}\mathcal{QL}x_{01}ds.
\end{align*} 
Since $\tilde x_0=(x_2(0),...,x_N(0))$ is random, $q(t,x_{01},\tilde x_0)$ is a random variable.  By using Jensen's inequality $[\mathbb{E}(X)]^2\leq \mathbb{E}[X^2]$, we have the following $L^{\infty}$ estimate
\begin{align}
\|(\P q)(t,x_{01})\|_{L^{\infty}} \leq \| q(t,x_{01},\cdot)\|_{L^2_{\rho_0}}.
\label{ineq}
\end{align}
On the other hand, we have 
\begin{align}\label{linear_etl}
\|e^{t\L}\|_{L^2_{\rho_0}(V)}\leq \|e^{t\L}\|_{L^2_{\rho_0}}\leq e^{t\omega},\quad \omega=- \frac{1}{2}\inf\Div_{\rho_0}{F}.
\end{align}
For linear dynamical systems, both $\|\cdot\|_{L^2_{\rho_0}(V)}$ and $\|\cdot\|_{L^2_{\rho_0}}$ upper bounds can be used to estimate the norm of the semigroup $e^{t\L}$. However, for the semigroup $e^{t\L\Q}$, we can only obtain the explicit form of the $\|\cdot\|_{L^2_{\rho_0}(V)}$ bound, which is given by the following perturbation theorem \cite{engel1999one} (see also Appendix \ref{sec:semigroupBoundsDecomposition}):
\begin{align}\label{linear_etql}
\| e^{t\L\Q}\|_{L^2_{\rho_0}(V)}\leq e^{t\omega_{\Q}},\quad \textrm{where}\quad \omega_{\Q}=\omega
+\sqrt{A_{11}^2+\sum_{i=2}^N A_{1i}^2\frac{\langle x_i^2(0)\rangle_{\rho_0}}{x_1^2(0)}}\geq  \omega+\|\L\P\|_{L^2_{\rho_0}(V)}.
\end{align}
}

\red{
\paragraph{Memory growth} It is straightforward at this point to 
compute the upper bound of the memory growth we obtained in Theorem\ref{memory_theorem}.  Since $\|\P\|_{L^2_{\rho_0}}= \|\Q\|_{L^2_{\rho_0}} = 1$ ($\P$ and $\Q$ are orthogonal projections relative to $\rho_0$), we have the following result
\begin{align}\label{prior_estimation_wot}
|w_0(t)|\leq\|\L\Q\L x_1(0)\|\frac{e^{t\omega}-e^{t\omega_{\Q}}}{\omega-\omega_{\Q}}
=\sqrt{ (b^Ta)^2 x_1^2(0)+\left\| {\Lambda}_{x_{i+1}(0)}
M_{11}^T a \right\|^2_{2}}
\frac{e^{t\omega}-e^{t\omega_{\Q}}}{\omega-\omega_{\Q}},
\end{align} 
where ${\Lambda}_{x_{i+1}(0)}$ is a $N-1\times N-1$ diagonal matrix with $\Lambda_{ii}=\langle x_{{i+1}}(0)\rangle_{\rho_0}$,  and $\|\cdot\|_{2}$ is the vector $2$-norm. 
}

\red{
\paragraph{Accuracy of the $H$-model} We are interested in 
computing the upper bound of the approximation error generated by 
the $H$-model (see section \ref{sec:Hmodel} - Theorem \ref{Thm_Decaying_error}).
By using the matrix representation of $\L$, $\P$ and $\Q$, the $n$-th order $H$-model MZ equation \eqref{MZlinear} for linear system can be explicitly written as 
\begin{equation}
\begin{cases}
\displaystyle\frac{d}{dt}\mathbb{E}[x_1|x_1(0)]=A_{11}\mathbb{E}[x_1|x_1(0)]+w_0^n(t)\qquad \textrm{(MZ equation),}\vspace{0.2cm}\\
\displaystyle\frac{dw_j^n(t)}{dt} =b^T(M_{11}^T)^j a^T \mathbb{E}[x_1|x_1(0)]+w_{j+1}^n(t),\qquad j=0,1,\dots ,n-1,\vspace{0.2cm}\\
\displaystyle\frac{d w_{n}^n(t)}{dt} =b^T(M_{11}^T)^n a^T\mathbb{E}[x_1|x_1(0)],
\end{cases}
\label{hier_equation1lor}
\end{equation}
where $M_{11}$, $a$ and $b$ are defined as 
before (see equation \eqref{ab}). 
The upper bound for the memory term approximation error is 
explicitly obtained as\footnote{\red{The error bound for 
$|w_0(t)-w_0^n(t)|$ used here is slightly different from the one we obtained 
in Theorem \ref{Thm_Decaying_error}. 
Instead of bounding the quotient 
$\alpha_n=|\L(\Q\L)^{n+1} u_0\|/\|\L(\Q\L)^{n}u_0\|$, here 
we choose to bound $\|\L(\Q\L)^nu_0\|$ directly, which yields 
the estimate \eqref{prior_estimation_Hmodel}.}} 
\begin{align}
|w_0(t)-w_0^n(t)|&\leq
A_1A_2\|\L(\Q\L)^nx_1(0)\|\frac{t^{n+1}}{(n+1)!}\nonumber\\
&=
A_1A_2\sqrt{\left[b^T\left(M_{11}^T\right)^na\right]^2x_1^2(0)+\left\|{\Lambda}_{x_{i+1}(0)}\left(M_{11}^T\right)^{n+1}a\right\|_{2}^2}
\frac{t^{n+1}}{(n+1)!}
\label{prior_estimation_Hmodel}
\end{align}
where $A_1$, $A_2$ are defined in \eqref{AoneAtwo}, while $\omega$ and $
\omega_{\Q}$  are given in \eqref{linear_etl} and \eqref{linear_etql}, 
respectively.  Note that for each fixed integration time $T$, the upper bound
\eqref{prior_estimation_Hmodel} goes to zero as we send $n$ to infinity, i.e., 
\begin{align*}
\lim_{n\rightarrow+\infty}|w_0(T)-w_0^n(T)|=0.
\end{align*}
This means that the $H$-model converges for all linear dynamical systems 
with observables in the linear space \eqref{spaceV}. 
}

\subsection{Memory Estimates for Finite-Rank Projections and Hamiltonian Systems}\label{sec:mori_GLE}
\red{
The semigroup estimates we obtained in section \ref{sec:semigroup_estimation} allow us to compute explicitly 
an {\em a priori} estimate of the memory kernel in the Mori-Zwanzig 
equation if we employ {\em finite-rank} projections. In this section we 
outline the procedure to obtain such estimate for Hamiltonian dynamical systems. We begin by recalling that such systems are 
divergence-free, i.e., 
\begin{equation}
\Div_{\rho_{eq}}( F) =0.
\label{divFree}
\end{equation}
Here, $F(x)$ is the velocity field in \eqref{eqn:nonautonODE}, 
while $\rho_{eq}=e^{-\beta\H}/Z$ is the canonical Gibbs 
distribution\footnote{Equation \eqref{divFree} is
obtained by noticing that  
\begin{align*}
\Div_{\rho_{eq}}(F) = & e^{\beta\H}\nabla\cdot\left(e^{-\beta \H} F\right)\\
= & e^{\beta\H}\sum_{i=1}^N \left(
\frac{\partial }{\partial q_i}
\left[e^{-\beta \H} \frac{\partial \H}{\partial p_i}\right]- 
\frac{\partial }{\partial p_i}
\left[e^{-\beta \H} \frac{\partial \H}{\partial q_i}\right]
\right)\\
=& 0.
\end{align*}}. Equation \eqref{divFree}, together with equation\eqref{koopmanEstimate} imply that the Koopman 
semigroup of a Hamiltonian dynamical system is a {\em contraction} 
in the $L_{eq}^2$ norm, i.e., 
\begin{equation}
\left\|e^{t\L}\right\|_{L_{eq}^2}\leq 1.
\end{equation}
Moreover, the MZ equation \eqref{reduced order equation} with 
finite-rank projection $\P$ of the form \eqref{MoriProjection} 
(with $\sigma=\rho_{eq}$) can be reduced to the following 
Volterra integro-differential equation 
	\begin{align}\label{gle}
		\frac{d}{dt}\Prj u_i(t) = \sum_{j=1}^M \Omega_{ij}\Prj {u}_j(t) - \sum_{j=1}^M\int_{0}^{t}K_{ij}(t-s)\Prj{u}_j(s)ds, \qquad i=1,...,M
	\end{align}
where 
\begin{subequations}
\begin{align}
		G_{ij} & = \langle u_{i}, u_{j}\rangle_{eq}\label{yo1}\\
		\Omega_{ij} &= \sum_{k=1}^M ({G}^{-1})_{jk}\langle u_{k}, \LV u_{i}\rangle_{{eq}}\\ 
		K_{ij}(t-s)  &=-\sum_{k=1}^M({G}^{-1})_{jk}\langle \PrjC\LV u_{k}, e^{(t-s)\PrjC\LV}\PrjC\LV u_{i}\rangle_{eq}.
	 \label{SFD}
	\end{align}
\end{subequations}
Equation \eqref{gle} is often referred to as the generalized Langevin equation (GLE) \cite{Darve,Snook}. 
To derive \eqref{yo1}-\eqref{SFD}, we used that fact that $\L$ is skew-adjoint 
and $\Q$ is self-adjoint with respect to the $L^2_{\rho_{eq}}$ inner product, 
and that $\Q^2=\Q$. Equation \eqref{SFD} is known in statistical physics as the {\em second
fluctuation dissipation theorem}. 
Next, define the time-correlation matrix
\begin{align}
		C_{ij}(t) = \langle u_{j}(0),u_{i}(t)\rangle_{eq} = \langle u_{j}(0),\Prj u_{i}(t)\rangle_{eq}.
	\end{align}
By applying $\displaystyle \langle u_j,(\cdot)\rangle_{eq}$ to both sides of equation \eqref{gle}, we obtain the following exact evolution equation
\begin{align}\label{gle_C}
\frac{d C_{ij}}{dt} = \sum_{k=1}^M \Omega_{ik} C_{kj} - \sum_{k=1}^M \int_{0}^{t}K_{ik}(t-s) C_{kj}(s)ds.
\end{align}
Moreover, if we employ a one-dimensional Mori's basis, i.e., $M=1$, then we obtain the simplified  equation
 \begin{align}\label{gle_C1D}
\frac{d C(t)}{dt} = \Omega_{1} C(t) - \int_{0}^{t}K(t-s) C(s)ds.
\end{align}
where $C(t)=\langle u_{1}(0), u_{1}(t)\rangle_{eq}$.
The main difficulty in solving the GLE \eqref{gle_C} (or \eqref{gle_C1D})
lies in computing the memory kernel $K_{ij}(t)$. 
Hereafter we show that such memory kernel can be uniformly bounded by 
a computable quantity that depends only on the initial condition. 
For the sake of simplicity, we shall focus on the one-dimensional GLE \eqref{gle_C1D}, where $u_1$ is the quantity of interest. 
\begin{theorem}\label{corollary_bounded}
For a one-dimensional GLE of the form \eqref{gle_C}, the memory 
kernel $K(t)$ is uniformly bounded by $\|\dot{u}_1(0)\|^2_{\rho_{eq}}/\| u_1(0)\|^2_{\rho_{eq}}$, i.e.
\begin{align}\label{upper_bound_memory}
|K(t)| & \leq \frac{\|\dot{u}_1(0)\|^2_{L^2_{\rho_{eq}}}}{\| u_1(0)\|^2_{L^2_{eq}}} \qquad \forall t\geq 0.
\end{align}
\end{theorem}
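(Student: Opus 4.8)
The plan is to bound $K(t)$ directly from its definition \eqref{SFD} specialized to $M=1$,
\[
K(t) = -\frac{1}{G}\langle \Q\L u_1,\, e^{t\Q\L}\Q\L u_1\rangle_{eq},\qquad G=\langle u_1,u_1\rangle_{eq}=\|u_1(0)\|^2_{eq},
\]
by combining the Cauchy--Schwarz inequality with the fact that the orthogonal-dynamics semigroup is a contraction (indeed an isometry) in the $L^2_{eq}$ norm for Hamiltonian systems. First I would apply Cauchy--Schwarz in the $\langle\cdot,\cdot\rangle_{eq}$ inner product to get $|K(t)|\leq G^{-1}\|\Q\L u_1\|_{eq}\,\|e^{t\Q\L}\Q\L u_1\|_{eq}$, reducing the problem to controlling the propagated vector $e^{t\Q\L}\Q\L u_1$.

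The crux is the observation that the argument $\Q\L u_1$ lies in $\Image\Q$, and that $\Image\Q$ is invariant under $e^{t\Q\L}$. Indeed, for $v\in\Image\Q$ set $w(t)=\P e^{t\Q\L}v$; then $\dot w=\P\Q\L e^{t\Q\L}v=0$ because $\P\Q=0$, and $w(0)=\P v=0$, so $e^{t\Q\L}v\in\Image\Q$ for all $t$. On this invariant subspace the generator $\Q\L$ acts as $\Q\L\Q$ (since $\Q v=v$ there), so the restriction of $e^{t\Q\L}$ to $\Image\Q$ coincides with $e^{t\Q\L\Q}$. I would then invoke the Hamiltonian structure: because \eqref{divFree} gives $\Div_{\rho_{eq}}F=0$, the Liouvillian is skew-adjoint in $L^2_{eq}$, so its numerical abscissa \eqref{numerical_abscissa} vanishes, $\omega=0$. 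The orthogonal semigroup bound \eqref{orth_semigroup_bound} then yields $\|e^{t\Q\L\Q}\|_{L^2_{eq}}\leq e^{\omega t}=1$ (equivalently, $\Q\L\Q$ is skew-adjoint on $\Image\Q$, hence $e^{t\Q\L\Q}$ is unitary there). Combined with the invariance just established, this gives $\|e^{t\Q\L}\Q\L u_1\|_{eq}\leq\|\Q\L u_1\|_{eq}$, and therefore $|K(t)|\leq \|\Q\L u_1\|^2_{eq}/\|u_1(0)\|^2_{eq}$.

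Finally, I would remove the $\Q$: since $\Q$ is an orthogonal projection (a contraction), $\|\Q\L u_1\|_{eq}\leq\|\L u_1\|_{eq}$, and by the definition of the Koopman generator \eqref{Koopman} one has $\L u_1=\dot u_1(0)$. This chains together into the claimed estimate $|K(t)|\leq\|\dot u_1(0)\|^2_{eq}/\|u_1(0)\|^2_{eq}$ for all $t\geq 0$. The step I expect to be the main obstacle is the middle one, namely rigorously justifying that $e^{t\Q\L}$ is a contraction on $\Image\Q$: this rests on the invariance of $\Image\Q$ together with the skew-adjointness of $\Q\L\Q$, but it must be reconciled with the unboundedness of $\L\Q$ and the open problem of strong continuity of $e^{t\Q\L}$ flagged earlier in the paper. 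For a Hamiltonian system with a finite-rank Mori projection these technicalities are benign, since the relevant orbit stays within a well-behaved subspace where $\Q\L\Q$ generates a genuine unitary group, so the bound is saturated by an isometry rather than merely a contraction.
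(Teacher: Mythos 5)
Your proposal is correct and follows essentially the same route as the paper's proof: both start from the second fluctuation--dissipation representation \eqref{SFD} with $M=1$, apply Cauchy--Schwarz, reduce the action of $e^{t\Q\L}$ on $\Image\Q$ to the semigroup $e^{t\Q\L\Q}$, and conclude contractivity $\|e^{t\Q\L\Q}\|_{L^2_{\rho_{eq}}}\leq 1$ from the vanishing numerical abscissa \eqref{numerical_abscissa} implied by $\Div_{\rho_{eq}}F=0$, finishing with $\|\Q\|_{L^2_{\rho_{eq}}}\leq 1$ and $\L u_1=\dot u_1(0)$. The only cosmetic difference is that the paper invokes the algebraic identity $e^{t\Q\L}\Q=\Q e^{t\Q\L\Q}$ directly, whereas you justify the same reduction by an invariance-of-$\Image\Q$ argument.
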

\begin{proof}
From the second-fluctuation dissipation theorem \eqref{SFD}, the memory kernel $K(t)$ satisfies
\begin{align*}
|K(t)|=\left|\frac{ \langle e^{t\Q\L}\Q\L u_1(0), \Q\L u_1(0)\rangle_{eq}}{\langle u_1(0),u_1(0)\rangle_{eq}}\right|
\leq\|e^{t\Q\L}\Q\|_{L^2_{\rho_{eq}}}
\frac{\|\L u_1(0)\|^2_{L^2_{\rho_{eq}}}}{\|u_1(0)\|^2_{L^2_{eq}}}
=\|e^{t\Q\L}\Q\|_{L^2_{\rho_{eq}}}
\frac{\|\dot u_1(0)\|^2_{L^2_{\rho_{eq}}}}{\|u_1(0)\|^2_{L^2_{eq}}}
\end{align*}
On the other hand, by using the numerical abscissa \eqref{numerical_abscissa} and formula \eqref{orth_semigroup_bound}, we see that the 
semigroup $e^{t\Q\L\Q}$ is contractive, i.e. $\|e^{t\Q\L\Q}\|_{L^2_{\rho_{eq}}}\leq 1$.  Since $\Q$ is an orthogonal projection with respect to $\rho_{eq}$, we have $\|e^{t\Q\L}\Q\|_{L^2_{\rho_{eq}}}=\|\Q  e^{t\Q\L\Q}\|_{L^2_{\rho_{eq}}}\leq \|\Q\|_{L^2_{\rho_{eq}}}\|e^{t\Q\L\Q}\|_{L^2_{\rho_{eq}}}\leq 1$. This yields
\begin{align*}
|K(t)|\leq
\|e^{t\Q\L}\Q\|_{L^2_{\rho_{eq}}}
\frac{\|\dot u_1(0)\|^2_{L^2_{\rho_{eq}}}}{\|u_1(0)\|^2_{L^2_{eq}}}\leq 
 \frac{\|\dot{u_1}(0)\|^2_{L^2_{\rho_{eq}}}}{\| u_1(0)\|^2_{L^2_{eq}}}.
\end{align*}
\end{proof}
Theorem \ref{corollary_bounded} provides an {\em a priori} 
(easily computable) upper bound for the memory kernel defining the dynamics of any quantity of interest $u_1$ that is initially in the Gibbs ensemble 
$\rho_{eq}=e^{-\beta\H}/Z$. In section \ref{sec:application}, we will 
calculate the upper bound \eqref{upper_bound_memory} 
analytically and compare it with the exact memory kernel we obtain in prototype linear and nonlinear Hamiltonian systems. 
\paragraph{Remark.}
We emphasized in section \ref{sec:semigroup_estimation} 
that the semigroup estimate for $e^{t\Q\L\Q}$ is not 
necessarily tight. In the context of high-dimensional Hamiltonian systems 
(e.g., molecular dynamics) it is often empirically assumed that 
the semigroup $e^{t\Q\L}\Q$ is dissipative, i.e. $\| e^{t\Q\L\Q}\| \leq e^{t\omega_0}$, where $\omega_0<0$. In this case, the memory 
kernel turns out to be uniformly bounded by an exponentially 
decaying function since
\begin{align*}
|K(t)|\leq \|e^{t\Q\L}\Q\|_{L^2_{\rho_{eq}}}\frac{\|\L u_1\|^2_{L^2_{\rho_{eq}}}}{\|u_1\|^2_{L^2_{eq}}}
\leq e^{t\omega_{0}} 
\frac{\|\dot u_1\|^2_{L^2_{\rho_{eq}}}}{\| u_1\|^2_{L^2_{eq}}}.
\end{align*} }

\section{Numerical Examples}
\label{sec:application}
\red{
In this section, we provide simple numerical examples 
of the MZ memory approximation methods 
we discussed throughout the paper. Specifically, we study 
Hamiltonian systems (linear and nonlinear) with 
finite-rank projections (Mori's projection), and non-Hamiltonian 
systems with infinite-rank projections (Chorin's projection). 
In both cases we demonstrate the accuracy of the a priori
memory estimation method we developed 
in \S \ref{sec:mori_GLE} and \S \ref{sec:linearDyn}.  We also 
compute the solution to the MZ equation for non-Hamiltonian systems 
with the $t$-model, the $H$-model and the $H_t$-model. 
}

\subsection{Hamiltonian Dynamical Systems with Finite-Rank Projections} 
In this section we consider dimension reduction in linear and nonlinear Hamiltonian dynamical systems with finite-rank projection.  
In particular, we consider the Mori projection and study the 
MZ equation for the temporal auto-correlation function of a 
scalar quantity of interest. 

\subsubsection{Harmonic Chains of Oscillators}
\red{
Consider a one-dimensional chain of harmonic oscillators. 
This is a simple but illustrative example of a linear Hamiltonian 
dynamical system which has been widely studied in statistical 
mechanics, mostly in relation with the microscopic theory of 
Brownian motion 
\cite{baxter2016exactly,florencio1985exact,espanol1996dissipative}. 
The Hamiltonian of the system can be written as   
\begin{align}
\label{bethe_hamiltonian}
\H(p,q)=\frac{1}{2m}\sum_{i=1}^N p_i^2+
\frac{k}{2}\sum_{\substack{i,j=0\\i<j}}^{N+1} (q_i-q_j)^2,
\end{align}
where $q_i$ and $p_i$ are, respectively, the displacement 
and momentum of the $i$-th particle, $m$ is the 
mass of the particles (assumed constant throughout the network), 
and $k$ is the elasticity constant that modulates the intensity of the 
quadratic interactions. We set fixed boundary conditions at 
the endpoints of the chain, 
i.e., $q_0(t)=q_{N+1}(t)=0$ and $p_0(t)=p_{N+1}(t)=0$
(particles are numbered from left to right) and $m=k=1$. 
The Hamilton's equations are
\begin{equation}
\frac{dq_i}{dt} = \frac{\partial \H}{\partial p_i},\qquad
\frac{dp_i}{dt} = -\frac{\partial \H}{\partial q_i},
\end{equation}
which can be written in a matrix-vector form as
\begin{align}
\label{equ:bethe_l}
\left[
\begin{matrix}
\dot{ p}\\
\dot{q}
\end{matrix}
\right]
=
\left[
\begin{matrix}
0&k B-k D\\
I/m& 0
\end{matrix}
\right]
\left[
\begin{matrix}
p\\
q
\end{matrix}
\right]
\end{align}
where $B$ is the adjacency matrix of the chain and $D$ is the degree matrix (see \cite{biggs1993algebraic}). Note that \eqref{equ:bethe_l} is a linear dynamical system. 
We are interested in the velocity auto-correlation function of a tagged 
oscillator, say the one at location $j=1$. Such auto-correlation 
function is defined as 
\begin{equation}
C_{p_1}(t)=\frac{\langle p_1(0)p_1(t)\rangle_{eq}}
{\langle p_1(0)p_1(0)\rangle_{eq}},
\label{autocorr}
\end{equation}
where the average is with respect to the 
Gibbs canonical distribution $\rho_{eq}=e^{-\beta\H}/Z$.  
It was shown in \cite{florencio1985exact} 
that $C_{p_1}(t)$ can be obtained analytically 
by employing Lee's continued fraction method . 
The result is the well-known $J_0-J_{4}$ solution
\begin{align}
C_{p_1}(t)=J_0(2t)-J_{4}(2t),
\label{VCF_analytic}
\end{align}
where $J_{i}(t)$ is the $i$-th Bessel function of the 
first kind. On the other hand, the Mori-Zwanzig equation 
derived by the following Mori's projection 
\begin{align}\label{mori_projection}
\P(\cdot)=\frac{\langle(\cdot),p_1(0)\rangle_{eq}}{\langle p_1(0),p_1(0)\rangle_{eq}}p_1(0)
\end{align}
yields the following GLE for $C_{p_1}(t)$
\begin{align}\label{GLE_linear_0}
\frac{dC_{p_1}(t)}{dt}=\Omega_{p_1}C_{p_1}(t)-\int_0^t K(s)C_{p_1}(t-s)ds.
\end{align}
Here, $$\displaystyle \Omega_{p_1}=\frac{\langle \L p_1(0),p_1(0)\rangle_{eq}}{\langle p_1(0),p_1(0)\rangle_{eq}}=0$$ 
since $\langle p_i(0),q_j(0)\rangle_{eq}=0$, while $K(t)$ is the memory kernel. For the $J_0-J_4$ solution, it is possible to derive the memory kernel $K(t)$ analytically. To this end, we simply insert \eqref{VCF_analytic} into \eqref{GLE_linear_0} and apply the Laplace transform   
\begin{align*}
\mathscr{L}[\cdot](s)=\int_0^\infty  (\cdot)  e^{-st}dt 
\end{align*}
to obtain 
\begin{align}
\hat K(s) = -s +\frac{1}{\hat{C}(s)},
\label{KLAP}
\end{align}
where $\hat C(s)=\mathscr{L}[C_{p_1}(t)]$ and $\hat{K}(s)=\mathscr{L}[K(t)]$. The inverse Laplace transform of \eqref{KLAP} can be computed 
analytically as 
\begin{equation}
K(t) = \frac{J_1(2t)}{t}+1.
\end{equation}
With $K(t)$ available, we can verify the memory estimated we derived in Theorem \ref{corollary_bounded}. To this end,    
\begin{align}\label{upper_bound_chain}
|K(t)|\leq \frac{\|\dot p_1(0)\|^2_{L^2_{\rho_{eq}}}}{\| p_1(0)\|^2_{L^2_{\rho_{eq}}}}
=\frac{\|q_2(0)-2q_1(0)\|^2_{L^2_{\rho_{eq}}}}{\| p_1(0)\|^2_{L^2_{\rho_{eq}}}}
=2.
\end{align} 
Here we used the exact solution of the velocity auto-correlation function and displacement auto-correlation function  of the fixed end harmonic chain given by (see \cite{florencio1985exact}) 
\begin{align*}
\langle v_i(0),v_j(0)\rangle_{eq}=\frac{k_BT}{\pi}\int_0^{\pi}\sin(ix)\sin(jx)dx,\qquad 
\langle q_i(0),q_j(0)\rangle_{eq}=\frac{k_BT}{\pi}\int_0^{\pi}\frac{\sin(ix)\sin(jx)}{4\sin^2(x/2)}dx.
\end{align*}
In Figure \ref{fig:VCF} we plot the absolute value of the memory kernel 
$K(t)$ together with the theoretical bound \eqref{upper_bound_chain}. It is seen that the bound we obtain in this case is of the same order of magnitude as the memory kernel. }
\begin{figure}[t]
\centerline{\hspace{0.5cm}(a)\hspace{7.1cm} (b)}
\centerline{
\includegraphics[height=6cm]{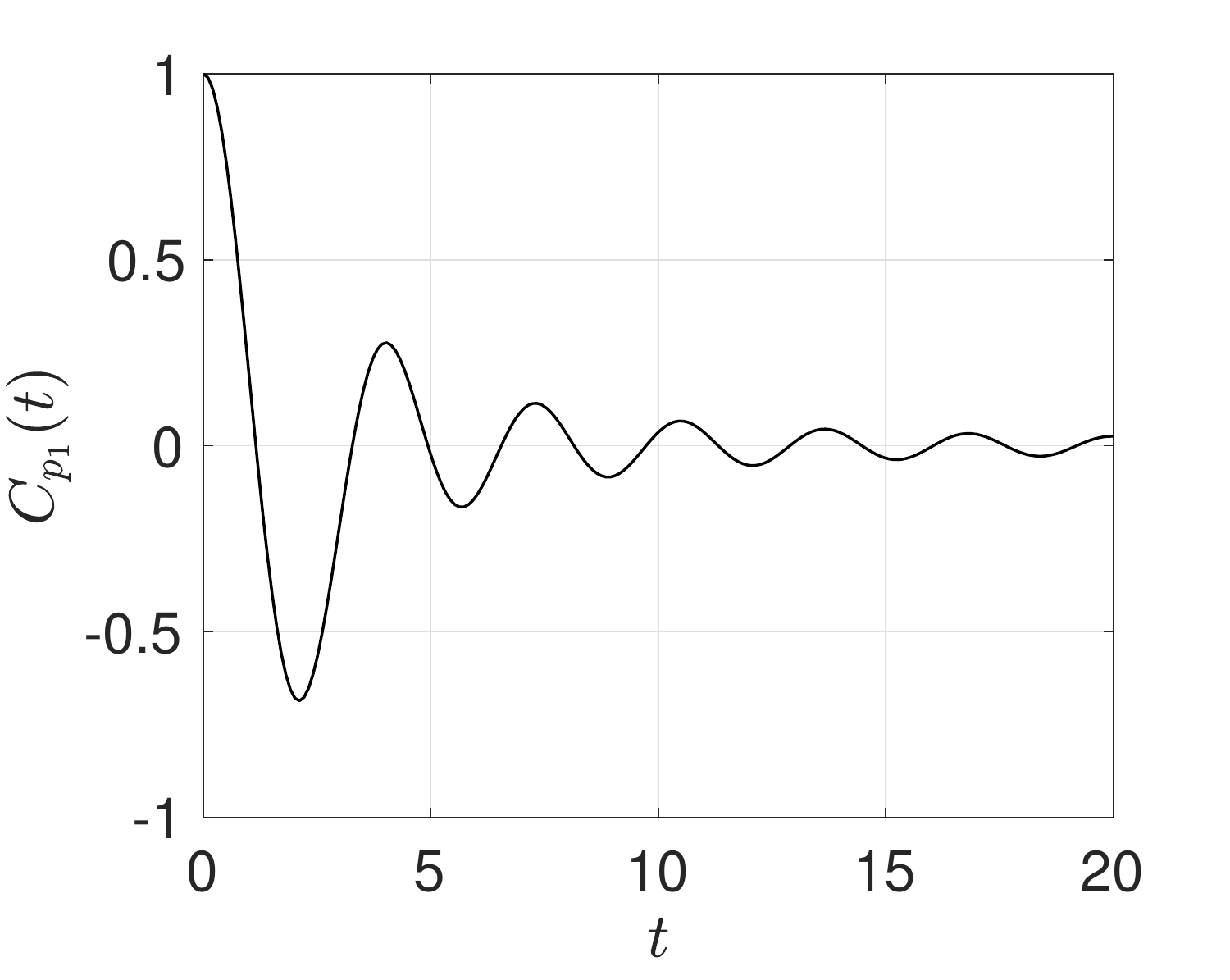} \hspace{0.0cm}
\includegraphics[height=6cm]{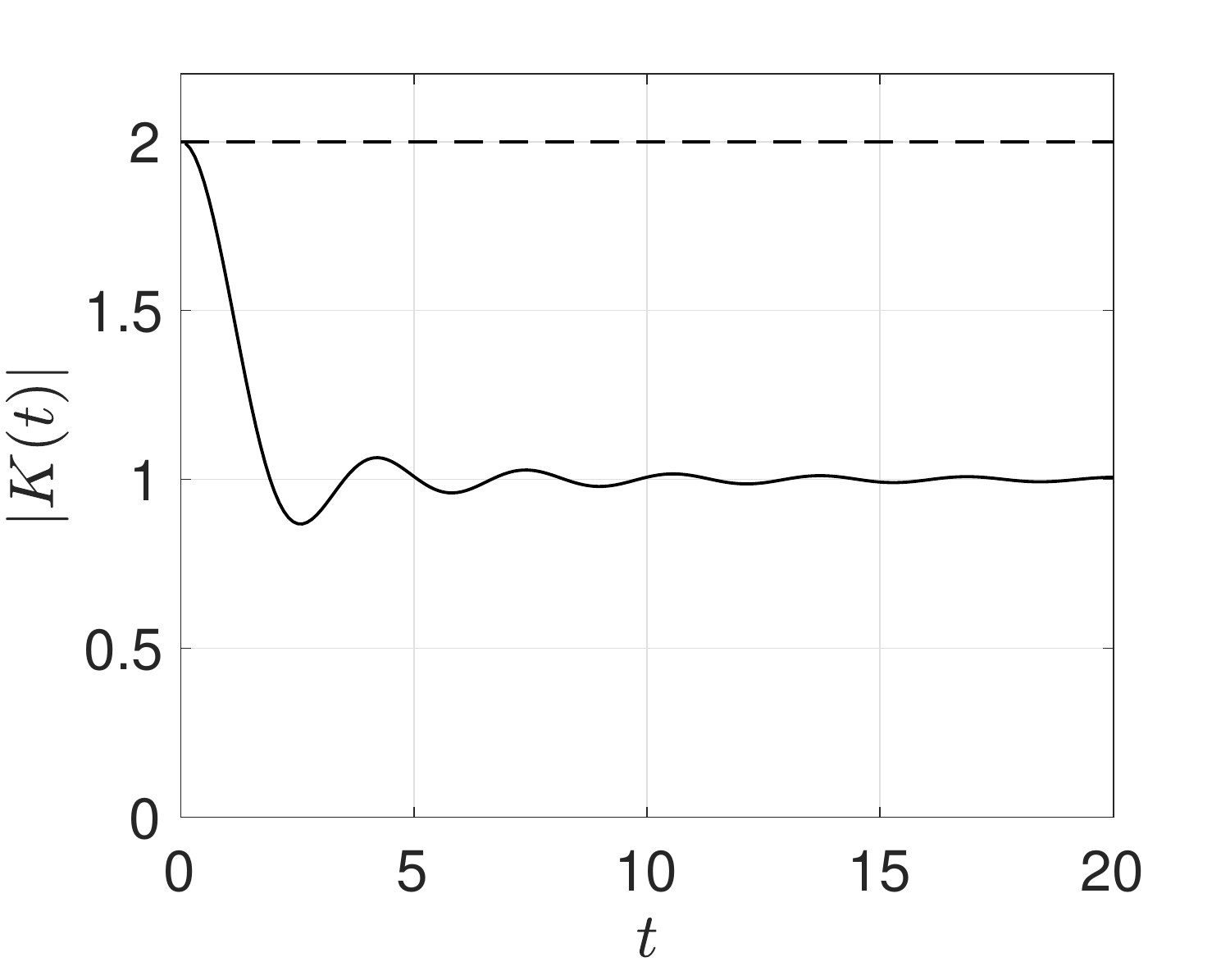} 
}
\caption{\red{Harmonic chain of oscillators. (a) Velocity auto-correlation function $C_{p_1}(t)$ and (b) memory kernel $K(t)$ of the corresponding 
MZ equation. It is seen that our theoretical estimate
\eqref{upper_bound_chain} (dashed line) correctly bounds 
the MZ memory kernel. The upper bound we obtain is of the same order 
of magnitude as the memory kernel.}}
\label{fig:VCF} 
\end{figure}

\subsubsection{Hald System}
\red{
In this section, we study the Hald Hamiltonian system 
studied by Chorin et. al. in \cite{chorin2002optimal,Chorin1}. 
The Hamiltonian is given by 
\begin{align}
\H:=\frac{1}{2}(q_1^2+p_1^2+q_2^2+p_2^2+q_1^2q_2^2),
\end{align}
while the corresponding Hamilton's equations of motion are
\begin{align}
\begin{cases}
\dot q_1&=p_1\\
\dot p_1&=-q_1(1+q_2^2)\\
\dot q_2&=p_2\\
\dot p_2&=-q_2(1+q_1^2)
\end{cases}
\label{Haldsystem}
\end{align}
We assume that the initial state is distributed according to canonical Gibbs distribution $\rho_{eq}=e^{-\beta\H}/Z$, where we set $\beta=1$ for simplicity. The partition function $Z$ is given by 
\begin{equation}
Z=e^{1/4}(2\pi)^{3/2}K_0\left(\frac{1}{4}\right), 
\end{equation}
where $K_0(t)$ is the modified Bessel function of the second kind.  We aim to study the properties of the autocorrelation function of the first component $q_1$, which is defined as 
\begin{align*}
C_{q_1}(t)=\frac{\langle q_1(0),q_1(t)\rangle_{eq}}{\langle q_1(0),q_1(0)\rangle_{eq}}
\end{align*}
Obviously, $C_{q_1}(0)=1$. The evolution equation for $C_{q_1}(t)$ 
is obtained by using the MZ formulation with the Mori's projection 
\begin{align}\label{mori_projection1}
\P(\cdot)=\frac{\langle(\cdot),q_1(0)\rangle_{eq}}{\langle q_1(0),q_1(0)\rangle_{eq}}q_1(0)
\end{align} 
This yields the GLE
\begin{align}\label{GLE_linear}
\frac{dC_{q_1}(t)}{dt}=\Omega_{q_1}C_{q_1}(t)-\int_0^tK(s)C_{q_1}(t-s) ds.
\end{align}
The streaming  term $\Omega_{q_1}C_{q_1}(t)$ is again identically zero, since
$$\displaystyle \Omega_{q_1}=\frac{\langle \L q_1(0),q_1(0)\rangle_{eq}}{\langle q_1(0),q_1(0)\rangle_{eq}}=0.$$ 
Theorem \ref{corollary_bounded} provides the following computable upper bound for the modulus of $K(t)$
\begin{align}\label{upper_bound}
|K(t)|\leq \frac{\|\dot q_1(0)\|^2_{L^2_{\rho_{eq}}}}{\| q_1(0)\|^2_{L^2_{\rho_{eq}}}}
=\frac{\|p_1(0)\|^2_{L^2_{\rho_{eq}}}}{\| q_1(0)\|^2_{L^2_{\rho_{eq}}}}
=\frac{\displaystyle e^{1/4}K_0\left(1/4\right) }{\displaystyle\sqrt{\pi} U\left(1/2,0,1/2\right)  }\approx 1.39786
\end{align} 
where $U(a,b,y)$ is the confluent hypergeometric function of the second kind. 
In Figure \ref{fig:correlation_hald}, we plot the correlation function 
$C_{q_1}(t)$ that we obtain numerically with Markov Chain 
Monte Carlo, and the memory kernel 
$K(t)$\footnote{\red{The memory kernel $K(t)$ 
here is computed by inverting numerically the Laplace 
transform of \eqref{GLE_linear}, i.e., 
\begin{equation}
K(t) = \mathscr{L}^{-1}\left[-s + \frac{1}{\hat C(s)}\right],
\label{IK}
\end{equation}
where $\hat C(s)= \mathscr{L} [C_{q_1}(t)]$. 
In practice, we replaced the numerical solution $C_{q_1}(t)$ within 
the time interval $[0,20]$ with with a high-order interpolating 
polynomial at Gauss-Chebyshev-Lobatto nodes (in $[0,20]$), 
computed $\hat C(s)$ analytically (Laplace transform of a polynomial), 
and then computed the inverse Laplace transform \eqref{IK} 
numerically with the Talbot algorithm \cite{Abate2006}.} and the upper bound \eqref{upper_bound}.}}

\begin{figure}[t]
\centerline{\hspace{0.5cm}(a)\hspace{7.1cm} (b)}
\centerline{
\includegraphics[height=6cm]{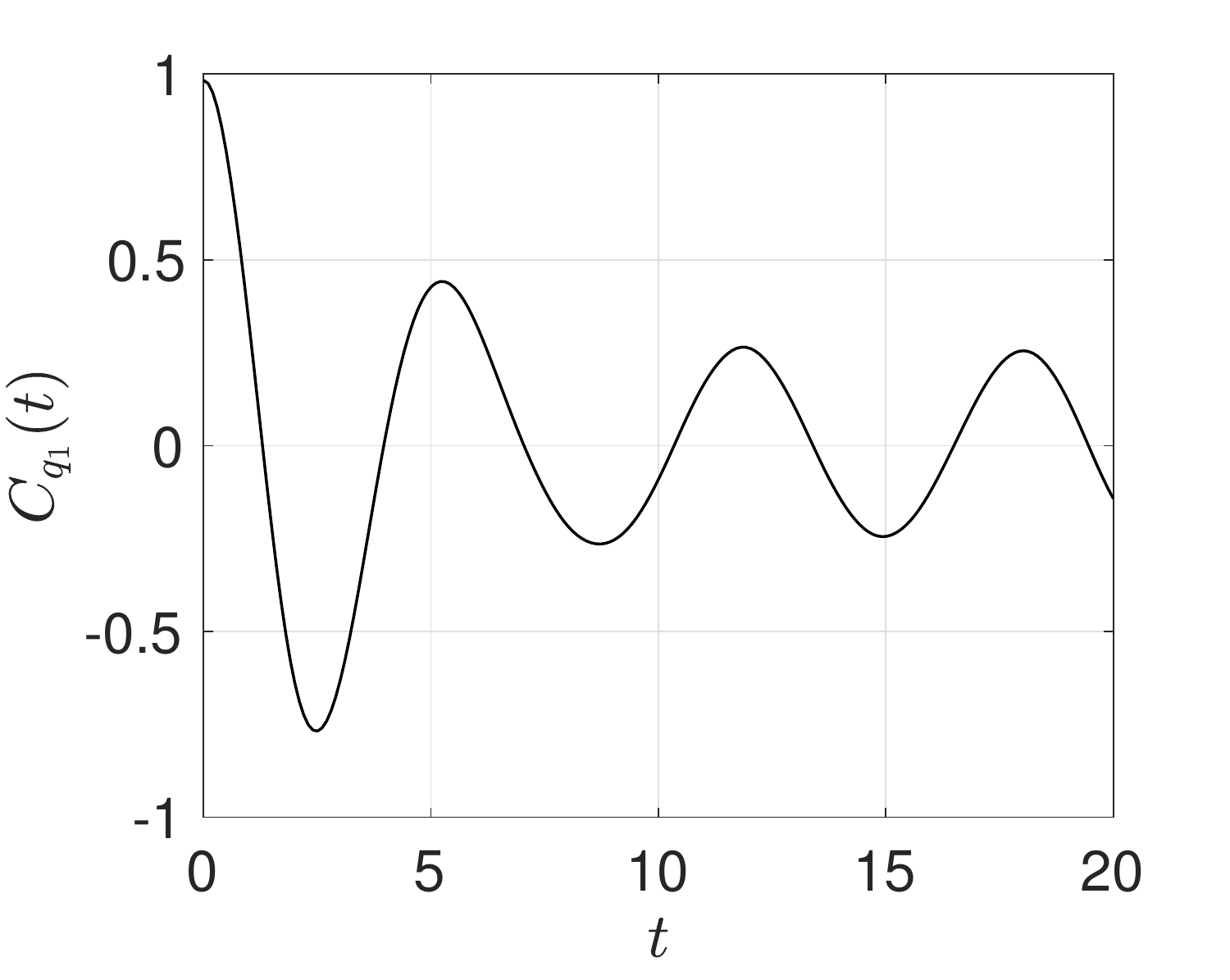}\hspace{0.0cm}
\includegraphics[height=6cm]{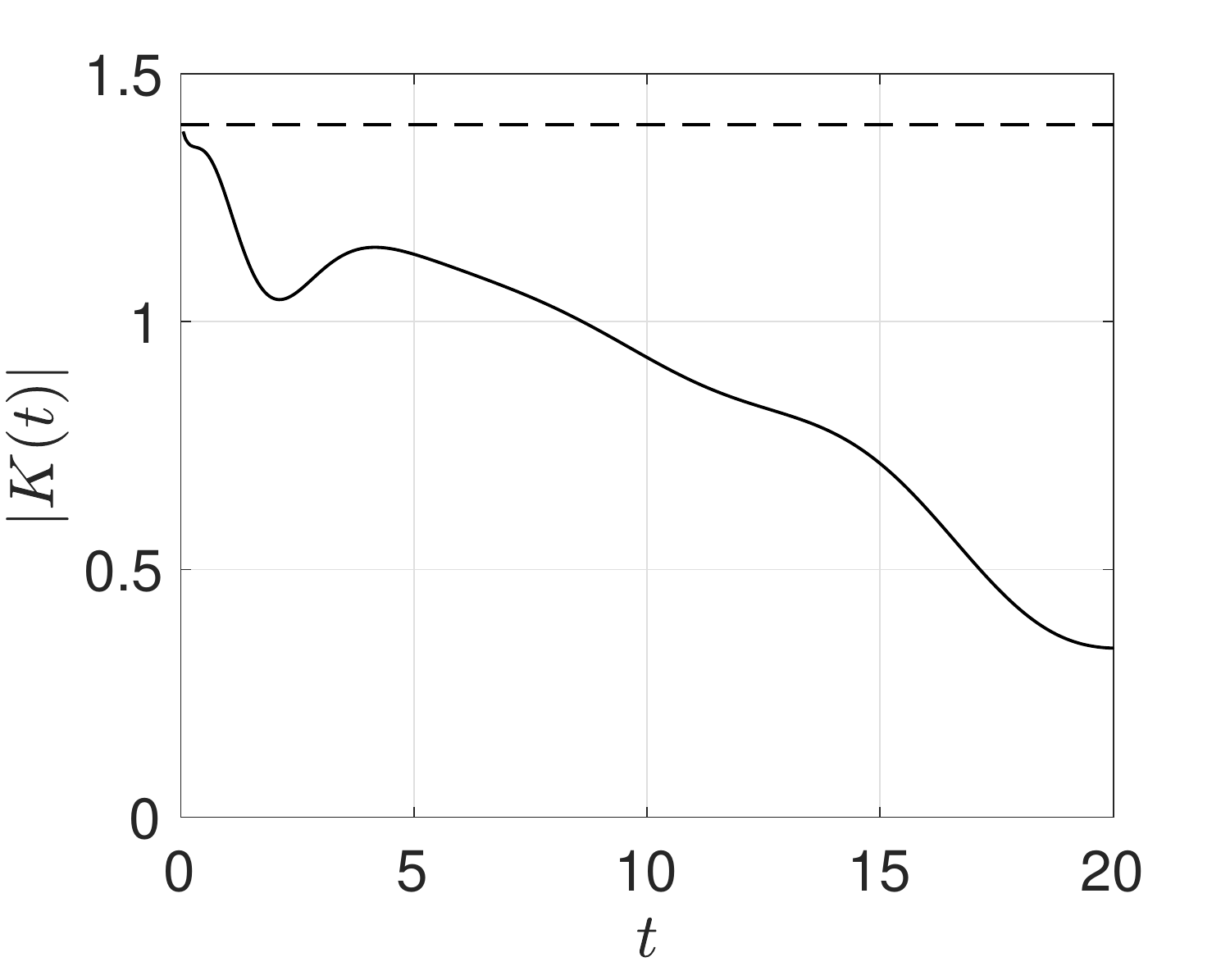}}
\caption{Hald Hamiltonian system \eqref{Haldsystem}.   (a) 
Autocorrelation function of the displacement $q_1(t)$ 
and (b) memory kernel of the governing MZ equation. Here $C_{q_1}(t)$ is computed by Markov chain Monte-Carlo (MCMC) while $K(t)$ is determined by inverting numerically the Laplace transform in \eqref{IK} with the Talbot algorithm. It is seen that the theoretical upper bound 
\eqref{upper_bound} (dashed line) is of the same order of magnitude as 
the memory kernel.}
\label{fig:correlation_hald}
\end{figure}

\subsection{Non-Hamiltonian Systems with Infinite-Rank Projections}
In this section we study the accuracy of the $t$-model, the $H$-model 
and the $H_t$ model in predicting scalar quantities of interest 
in non-Hamiltonian systems. In particular, we consider the MZ formulation 
with Chorin's projection operator. For the particular case of linear 
dynamical systems we also compute the theoretical upper bounds 
we obtained in \S \ref{sec:linearDyn} for the memory growth and 
the error in the $H$-model, and compare such bounds with 
exact results. 

\subsubsection{Linear Dynamical Systems}

\red{
We begin by considering a low-dimensional linear dynamical system 
$\dot x=A x$ evolving from a random initial state with density 
$\rho_0(x)$ to verify the MZ memory estimates we obtained in \S 
\ref{sec:linearDyn}. For  simplicity, we choose $A$ to be negative definite
\begin{align}
\label{matrix_A}
A= e^C B e^{-C},\qquad 
{B}=
\begin{bmatrix}
    -\frac{1}{8} & 0 & 0  \\
    0 & -\frac{2}{3} &  0 \\
 0& 0& -\frac{1}{2}
\end{bmatrix},
\qquad
{C}=
\begin{bmatrix}
    0 & 1 &   0 \\
   -1& 0 &1 \\
    0 & -1 &0
\end{bmatrix}.
\end{align}
In this case, the origin of the phase space is a stable node and 
it is easy to estimate 
$\left\|e^{t\L}\right\|_{\rho_0}$\footnote{\red{
For general matrices $A$, it is more difficult to estimate 
$\| e^{t\L}\|_{L^2_{\rho_0}}$. However, since $\L$ is a bounded linear 
operator in the subspace $V$ where the quantity of interest lives, we can use 
the  norm $\|e^{t\L} \|_{L^2_{\rho_0}(V)}$, which is explicitly computable.}}
We set $x_1(0)=1$ and ${x_2(0), x_3(0)}$  independent standard normal 
random variables. In this setting, the semigroup estimates \eqref{linear_etl} 
and \eqref{linear_etql} are explicit 
\begin{align*}
\| e^{t\L}\|&\leq e^{t\omega},\quad \omega=-\frac{1}{2}\Tr(A)=0.6458,\\
\|e^{t\L\Q}\|&\leq e^{t\omega_{\Q}},\quad \omega_{\Q}=\omega
+\sqrt{A_{11}^2+\sum_{i=2}^N\frac{A_{1i}^2}{x_1^2(0)}}=1.1621.
\end{align*}
Therefore, we obtain the following explicit upper bounds for the 
memory integral and the error of the $H$-model (see equations  
\eqref{prior_estimation_wot} and \eqref{prior_estimation_Hmodel}) 
\begin{align}\label{prior_1}
|w_0(t)|&\leq 0.1964\left(e^{1.1621t}-e^{0.6458t}\right),\\
|w_0(t)-w_0^n(t)| &\leq e^{1.1624t}
\sqrt{\left( b^T\left(M_{11}^T\right)^na_1\right)^2 x_1^2(0)+
\left\|\left(M_{11}^T\right)^{n+1}a\right\|_{2}^2}
\frac{t^{n+1}}{(n+1)!}.
\label{prior_2}
\end{align}
Next, we compare these error bounds with numerical results obtained by solving numerically the $H$-model \eqref{hier_equation1lor}. For example, the  second-order $H$-model reads  
\begin{align}
\begin{dcases}
\frac{d}{dt}\mathbb{E}[x_1(t)|x_{1}(0)]=-0.4560\mathbb{E}[x_1(t)|x_{1}(0)]+w_0^2(t),\\
\frac{dw_0^2(t)}{dt}=0.0586\mathbb{E}[x_1(t)|x_{1}(0)]+w_1^2(t),\\
\frac{dw_1^2(t)}{dt}=-0.0192\mathbb{E}[x_1(t)|x_{1}(0)].
\end{dcases}
\label{Hm2}
\end{align}
In Figure \ref{fig:linear_3d} we demonstrate convergence 
of the $H$-model to the benchmark solution computed by 
Monte-Carlo simulation as we increase the $H$-model 
differentiation order. In Figure \ref{fig:Linearbounds} we 
plot the bound on the memory growth (equation \eqref{prior_1}) 
and the bound in the memory error (equation \eqref{prior_2}) together 
with exact results.}

\begin{figure}[t]
\centerline{
\includegraphics[height=6cm]{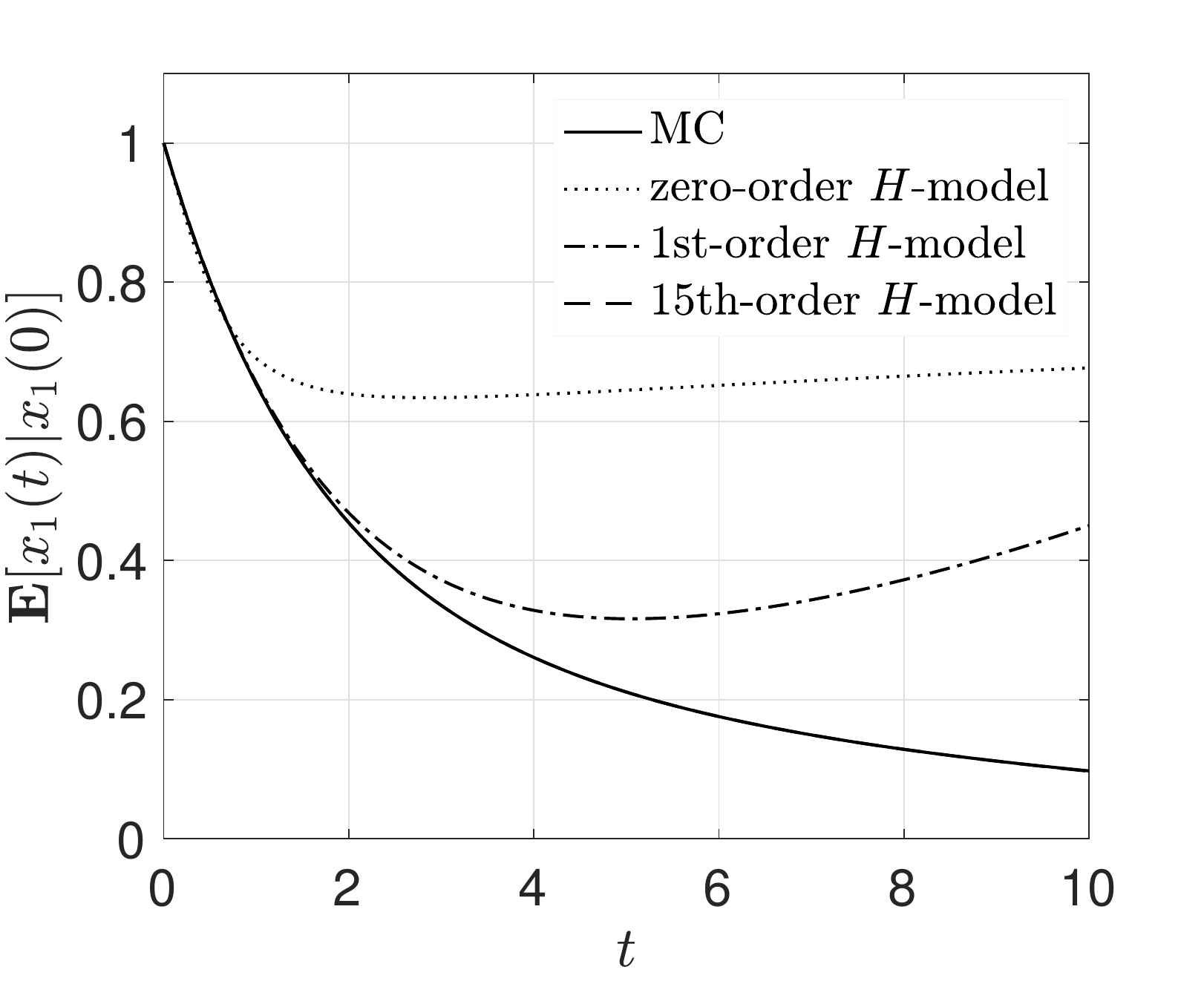} 
\includegraphics[height=6cm]{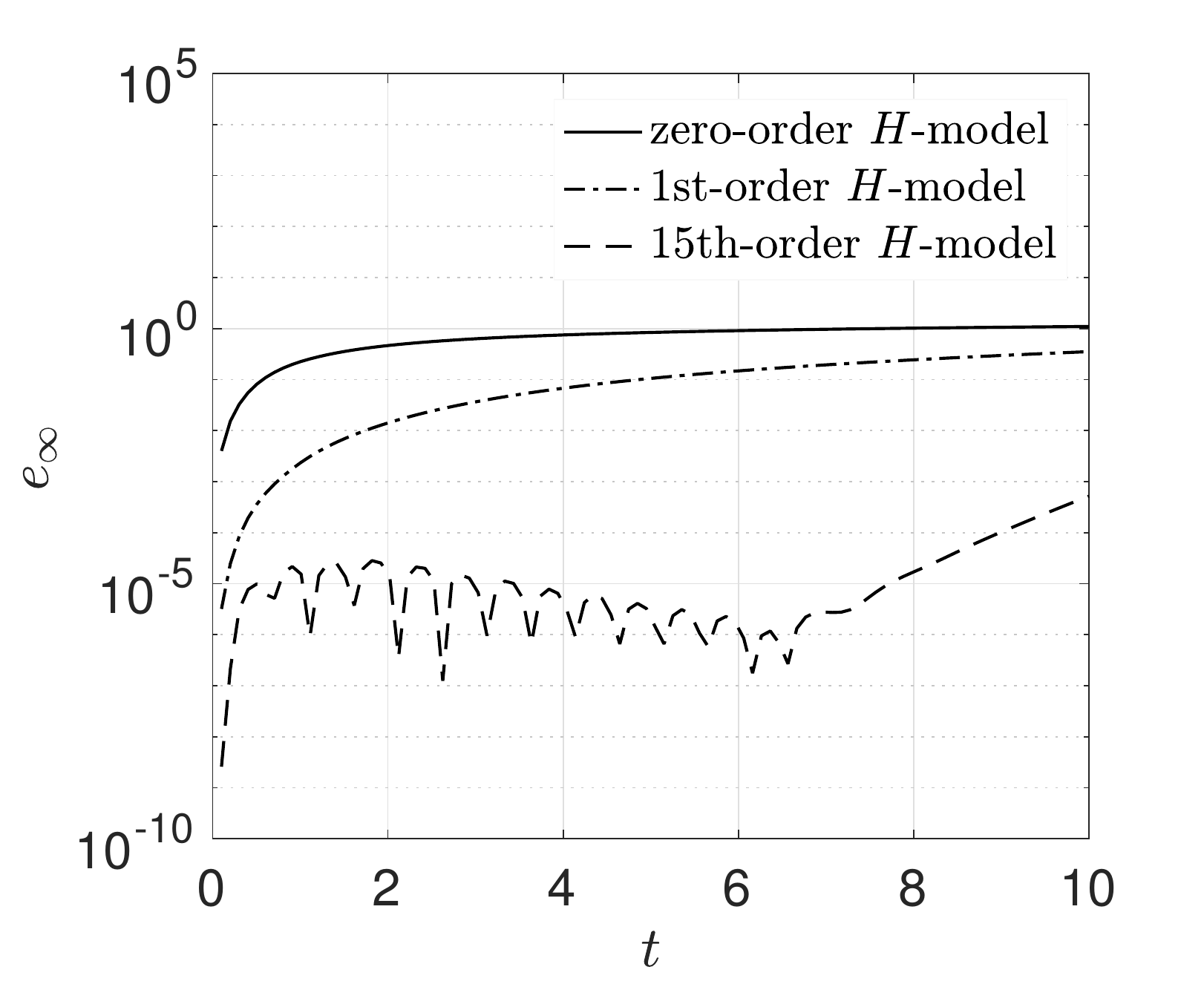}
}

\caption{\red{Convergence of the $H$-model for the 
linear dynamical system with matrix \eqref{matrix_A}. The benchmark 
solution is computed with Monte-Carlo (MC) simulation. Also, the 
zero-order $H$-model represents the Markovian approximation 
to the MZ equation, i.e. the MZ equation without the memory term.}}
\label{fig:linear_3d}
\end{figure}

\begin{figure}[t]
\centerline{\hspace{0.4cm}(a)\hspace{5.3cm}(b)\hspace{5.2cm}(c)}
\centerline{
\includegraphics[height=4.5cm]{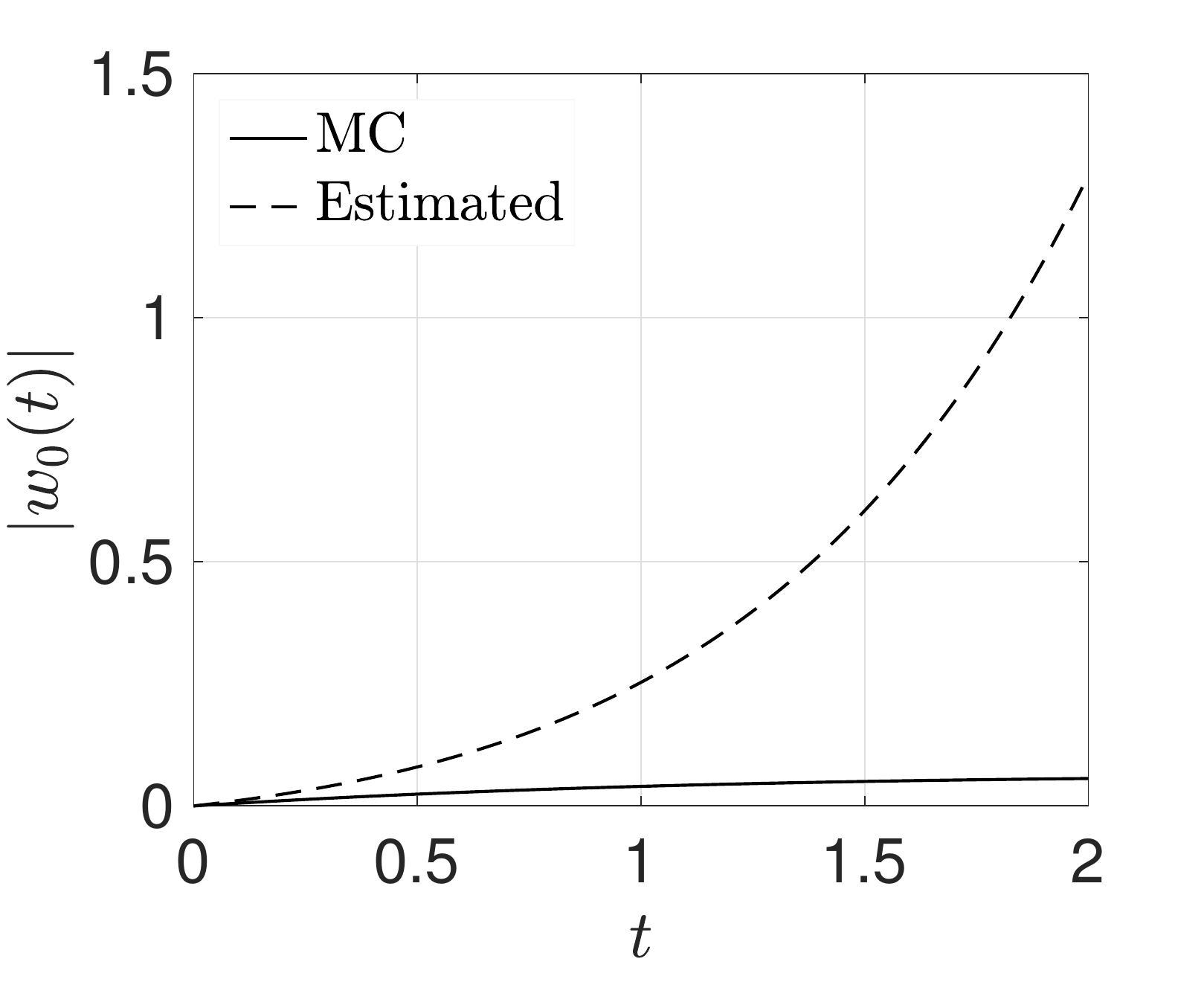}
\includegraphics[height=4.5cm]{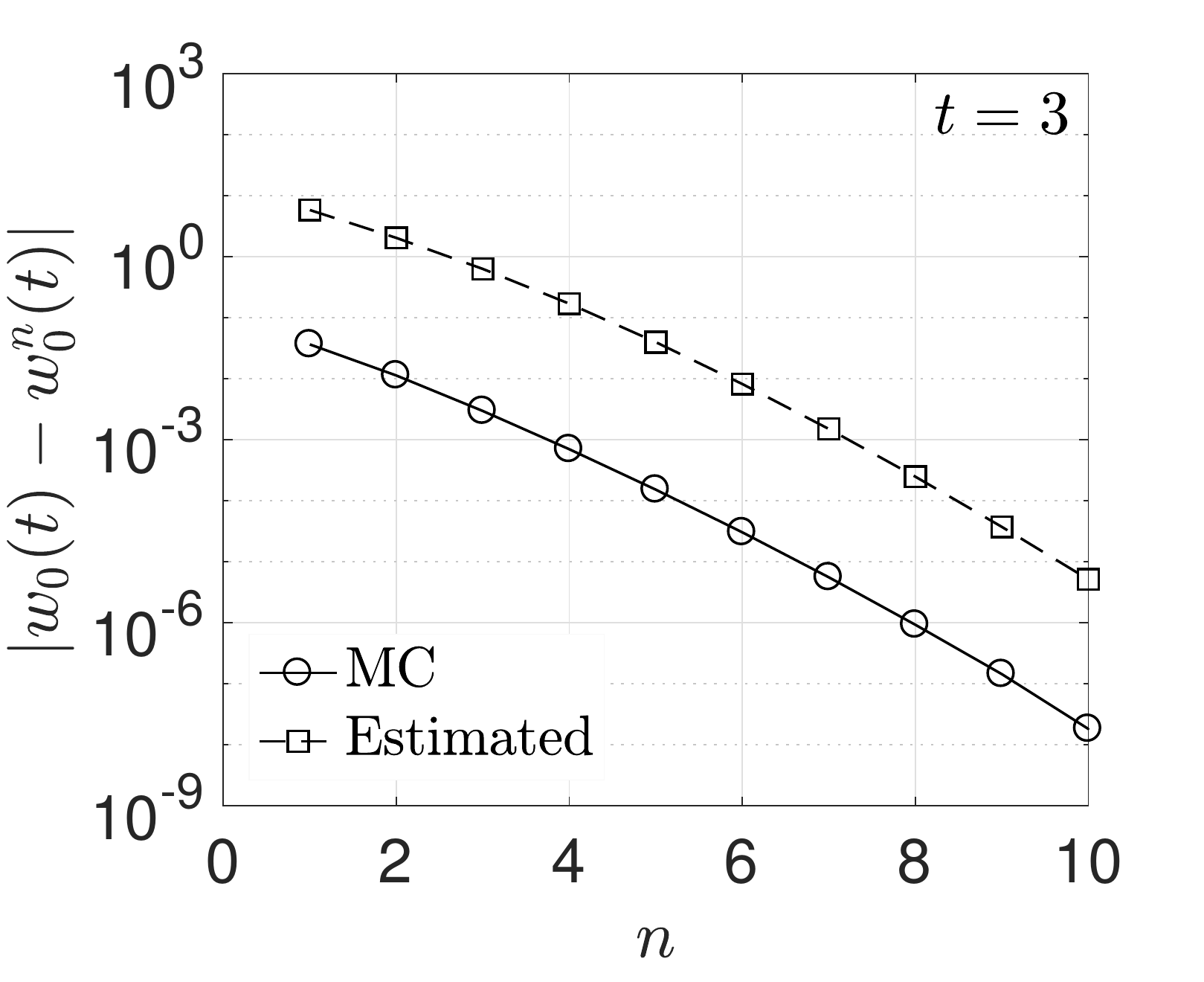} 
\includegraphics[height=4.5cm]{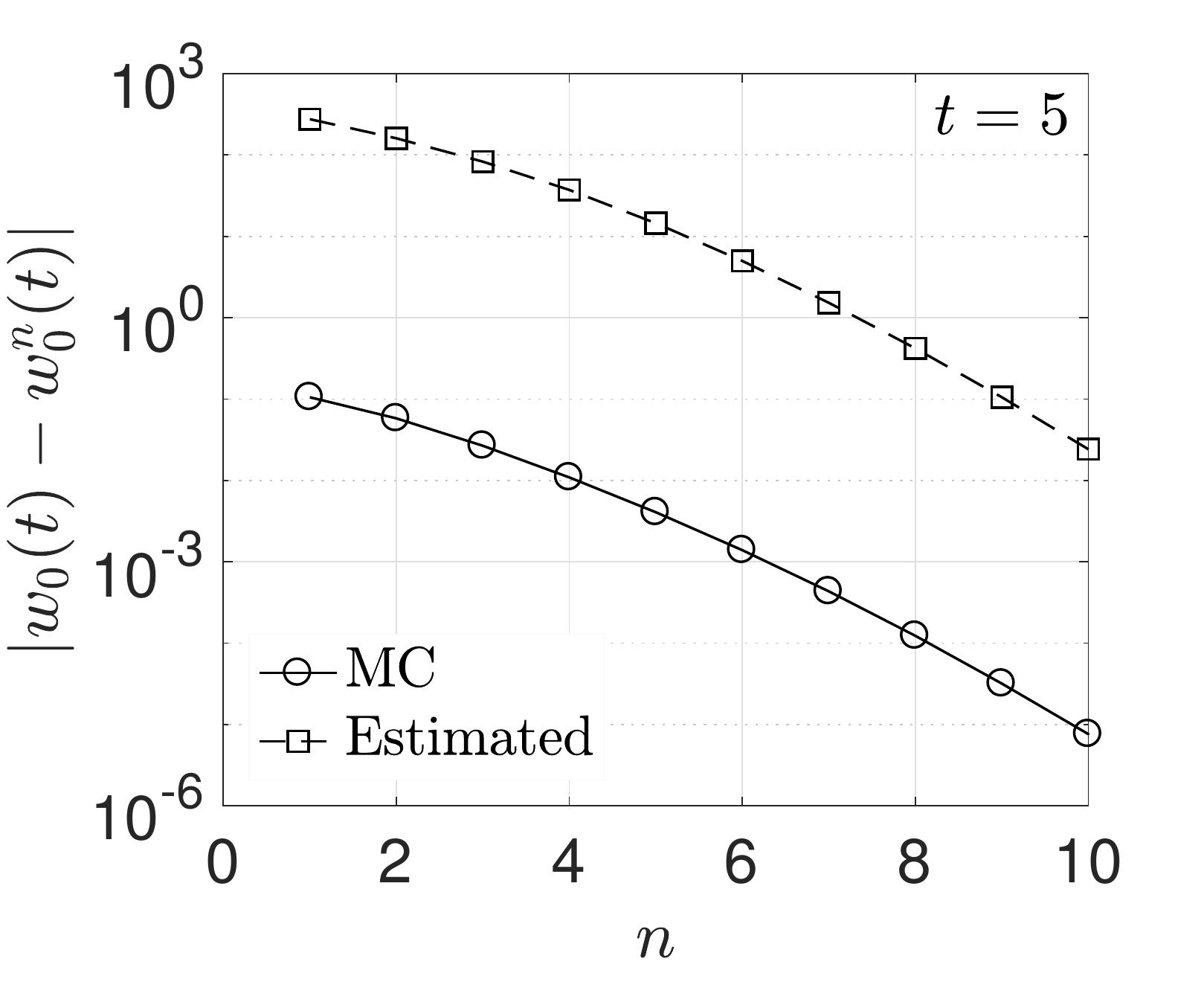}
}
\caption{\red{Linear dynamical system with matrix \eqref{matrix_A}. 
In (a) we plot the memory term $w_0(t)$ we obtain from Monte Carlo 
simulation together with the estimated upper bound \eqref{prior_1}. 
In (b) and (c) we plot $H$-model approximation error $|w_0(T)-w_0^n(T)|$ together with the upper bound \eqref{prior_2} for different differentiation orders $n$ and at different times $t$.}}
\label{fig:Linearbounds}
\end{figure}

\paragraph{Remark}  
The results we just obtained can be obviously extended to 
higher-dimensional linear dynamical systems. 
In Figure \ref{fig:linear_nd} we plot the benchmark conditional 
mean path we obtained through Monte Carlo simulation
together with the solution of the $H$-model \eqref{hier_equation1lor}  
for the $100$-dimensional linear dynamical system defined by the matrix
($N=100$)
\begin{align}
A=
\begin{bmatrix}
    -1 & 1 &  & \dots  & (-1)^N\\
    1 &  & &  & \\
    \vdots & &  & B &  \\
    1 &  &  &  & 
\end{bmatrix},
\label{Alin1}
\end{align}
where $B=e^{C}\Lambda e^{-{C}}$ and  
\begin{align*}
{\Lambda}=
\begin{bmatrix}
    -\frac{1}{8} & 0  & \cdots & 0  \\
    0 & -\frac{2}{9} &  & \vdots \\
     \vdots & & \ddots &0\\
    0 &\cdots & 0&-\frac{N-1}{N+6}
\end{bmatrix},
\qquad
{C}=
\begin{bmatrix}
    0 & 1 &  &   0 \\
     -1& 0 & \ddots& \\
     & \ddots& \ddots &1\\
    0 & & -1&0 
\end{bmatrix}.
\end{align*}
It is seen that the $H$-model converges as we increase the differentiation order in any finite time interval, in agreement with the theoretical prediction 
of section \ref{sec:linearDyn}.
\begin{figure}[t]
\centerline{
\includegraphics[ height=6cm]{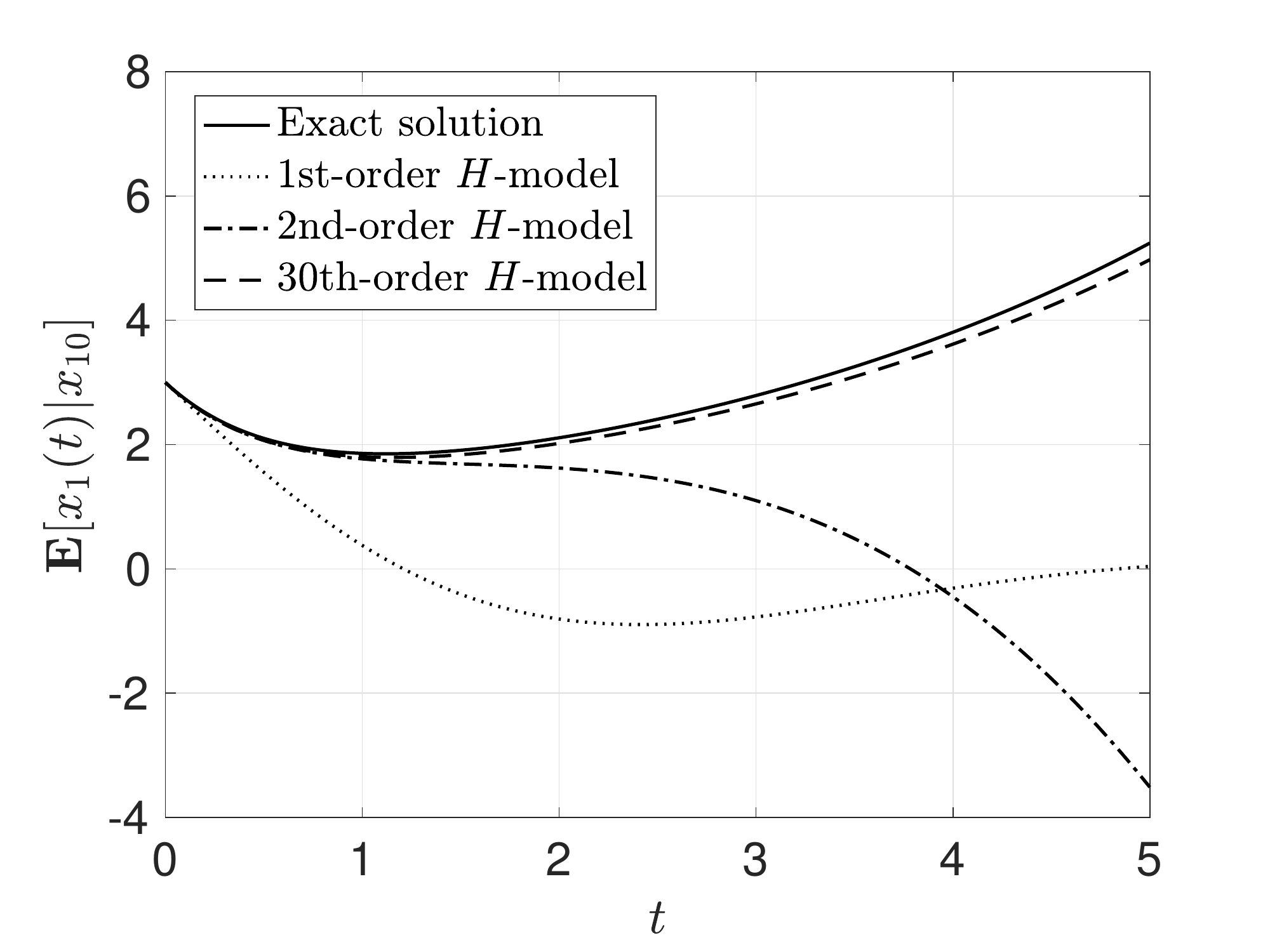} 
\includegraphics[ height=6cm]{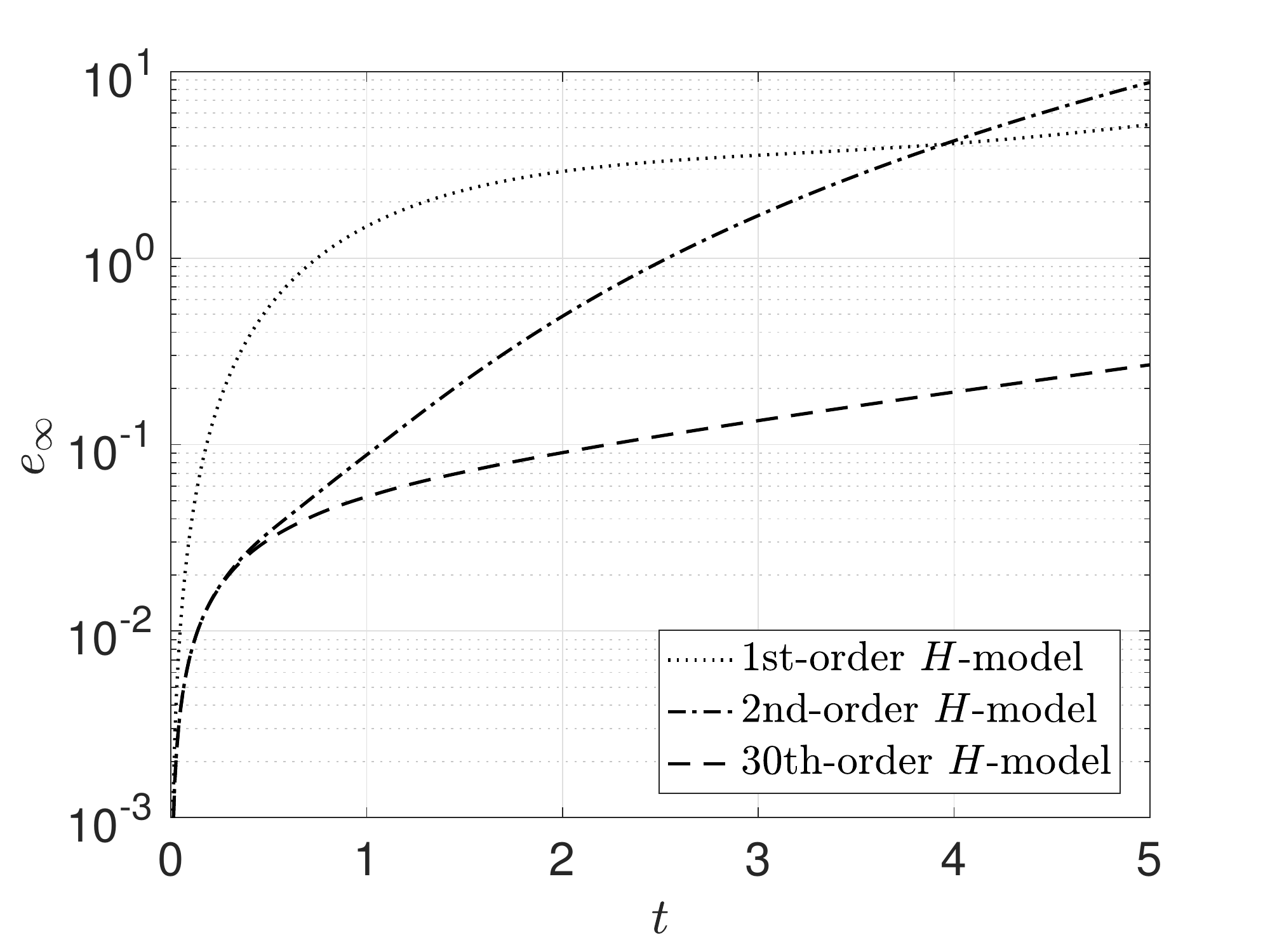}
}
\caption{
Linear dynamical system with matrix $A$ \eqref{Alin1}. 
Convergence of the  $H$-model to the conditional mean path solution $\mathbb{E}[x_1(t)|x_1(0)]$. The initial condition is set as $x_{1}(0)=3$, while $\{x_{2}(0), \dots , x_{100}(0)\}$ are i.i.d. Normals.}
\label{fig:linear_nd}
\end{figure}
\subsubsection{Nonlinear Dynamical Systems}
The hierarchical memory approximation method we 
discussed in section \ref{sec:hierarchical} 
can be applied to nonlinear dynamical systems
in the form \eqref{eqn:nonautonODE}. 
As we will see, if we employ the $H_t$-model 
then the nonlinearity introduces a closure 
problem that needs to be addressed properly. 

\paragraph{Lorenz-63 System}
\label{sec:Lor63}
Consider the classical Lorenz-63 model 
\begin{align}\label{lorenz_equation}
\begin{dcases}
\dot{x}_1=\sigma(x_2-x_1)\\
\dot{x}_2=x_1(r-x_3)-x_2\\
\dot{x}_3=x_1x_2-\beta x_3
\end{dcases}
\end{align}
where $\sigma=10$ and $\beta=8/3$.
The phase space Liouville operator for this ODE is
\begin{align*}
\L=\sigma(x_2-x_1)\frac{\partial}{\partial x_1}+
(x_1(r-x_3)-x_2)\frac{\partial}{\partial x_2}
+(x_1x_2-\beta x_3)\frac{\partial}{\partial x_3}.
\end{align*}
We choose the resolved variables to be $ \hat{x}=\{x_1,x_2\}$ 
and aim at formally integrating out $\tilde{x}={x_3}$ by using 
the Mori-Zwanzig formalism. To this end, we set 
$x_{3}(0)\sim \mathcal{N}(0,1)$ and consider 
the zeroth-order $H_t$-model ($t$-model)
\begin{align}
\begin{dcases}
\frac{dx_{1m}}{dt}=\sigma(x_{1m}-x_{2m}),\\
\frac{dx_{2m}}{dt}=-x_{2m}+rx_{1m}-tx_{1m}^2x_{2m},
\end{dcases}
\label{lorenz_mean_path_equation}
\end{align}
where $x_{1m}(t)=\mathbb{E}[x_1(t)|x_{1}(0),x_{2}(0)]$ and $x_{2m}(t)= \mathbb{E}[x_2(t)|x_{1}(0),x_{2}(0)]$ are {\em conditional mean paths}.
To obtain this system we introduced the following 
mean field closure approximation
\begin{align}
t\P e^{t\L}\P\L\Q\L x_2(0) = & -t \mathbb{E}[x_1(t)^2 x_2(t)| x_1(0),x_2(0)],\nonumber\\
\simeq &  -t \mathbb{E}[x_1(t)| x_1(0),x_2(0)]^2 
\mathbb{E}[x_2(t)| x_1(0),x_2(0)],\nonumber\\
=& -t x_{1m}^2 x_{2m}. \label{MFC}
\end{align}
Higher-order $H_t$-models can be derived based 
on \eqref{MFC}. 
\begin{figure}[t]
\centerline{
\includegraphics[height=6cm]{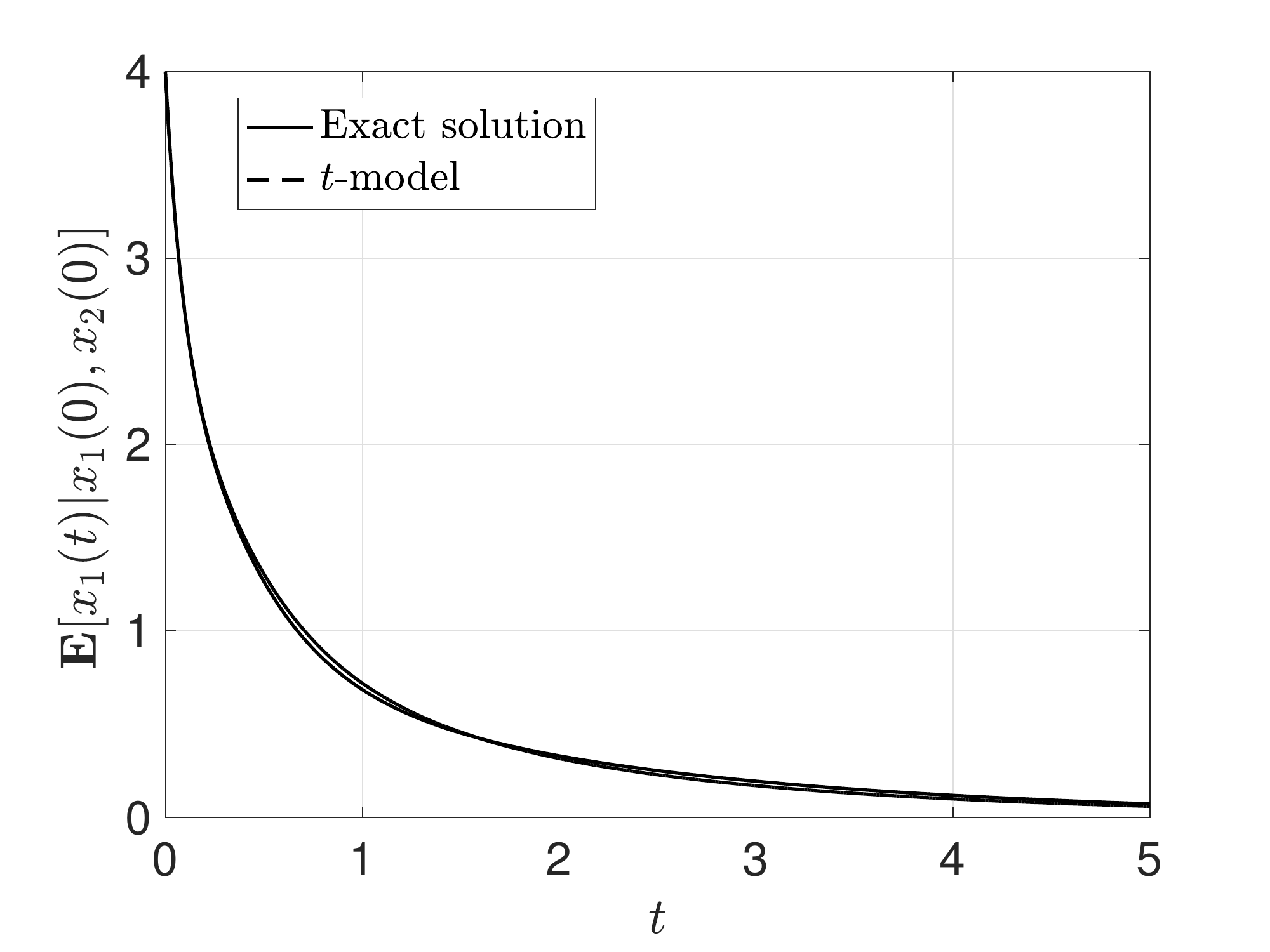} 
\includegraphics[height=6cm]{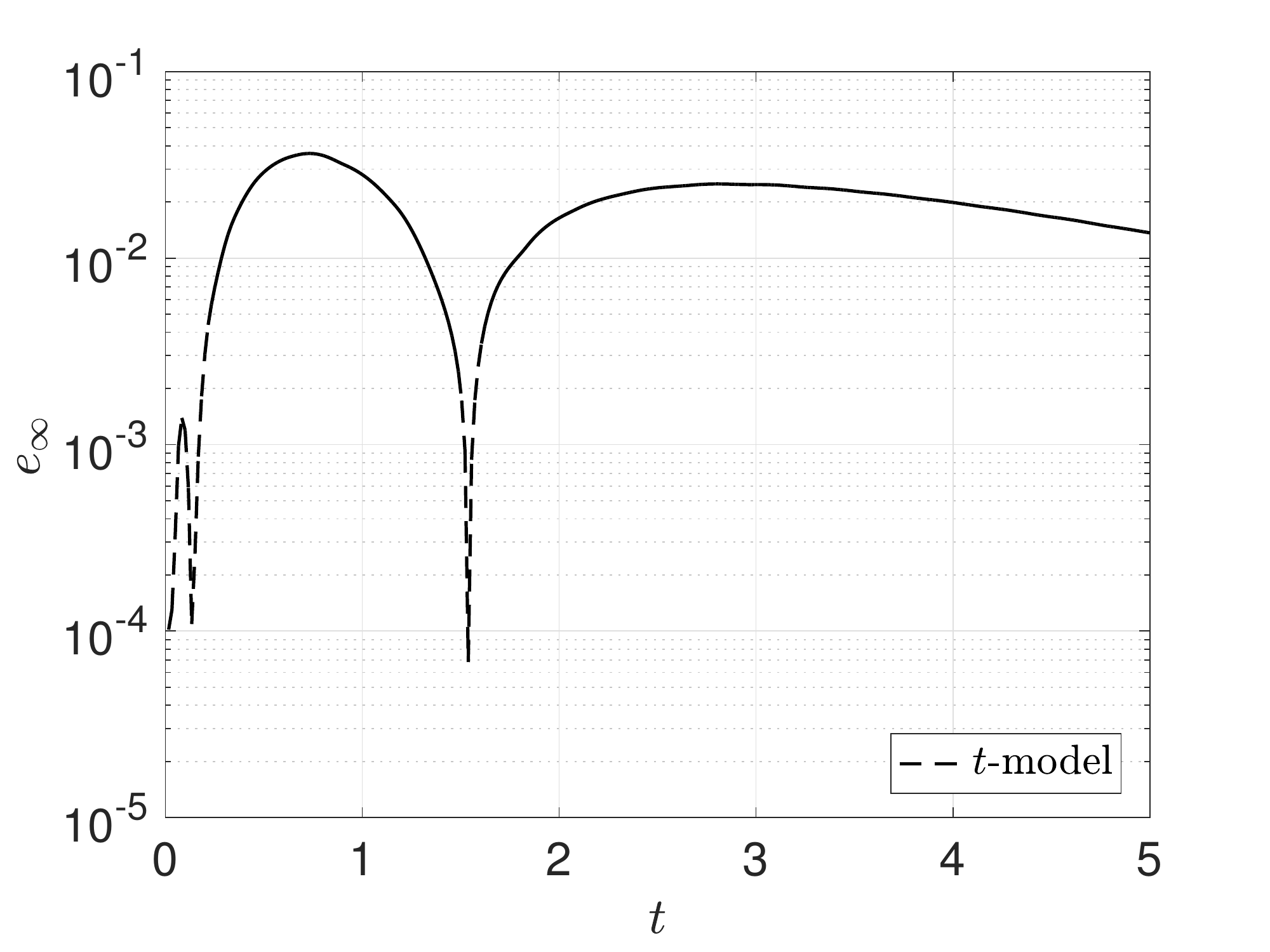}}
\centerline{
\includegraphics[height=6cm]{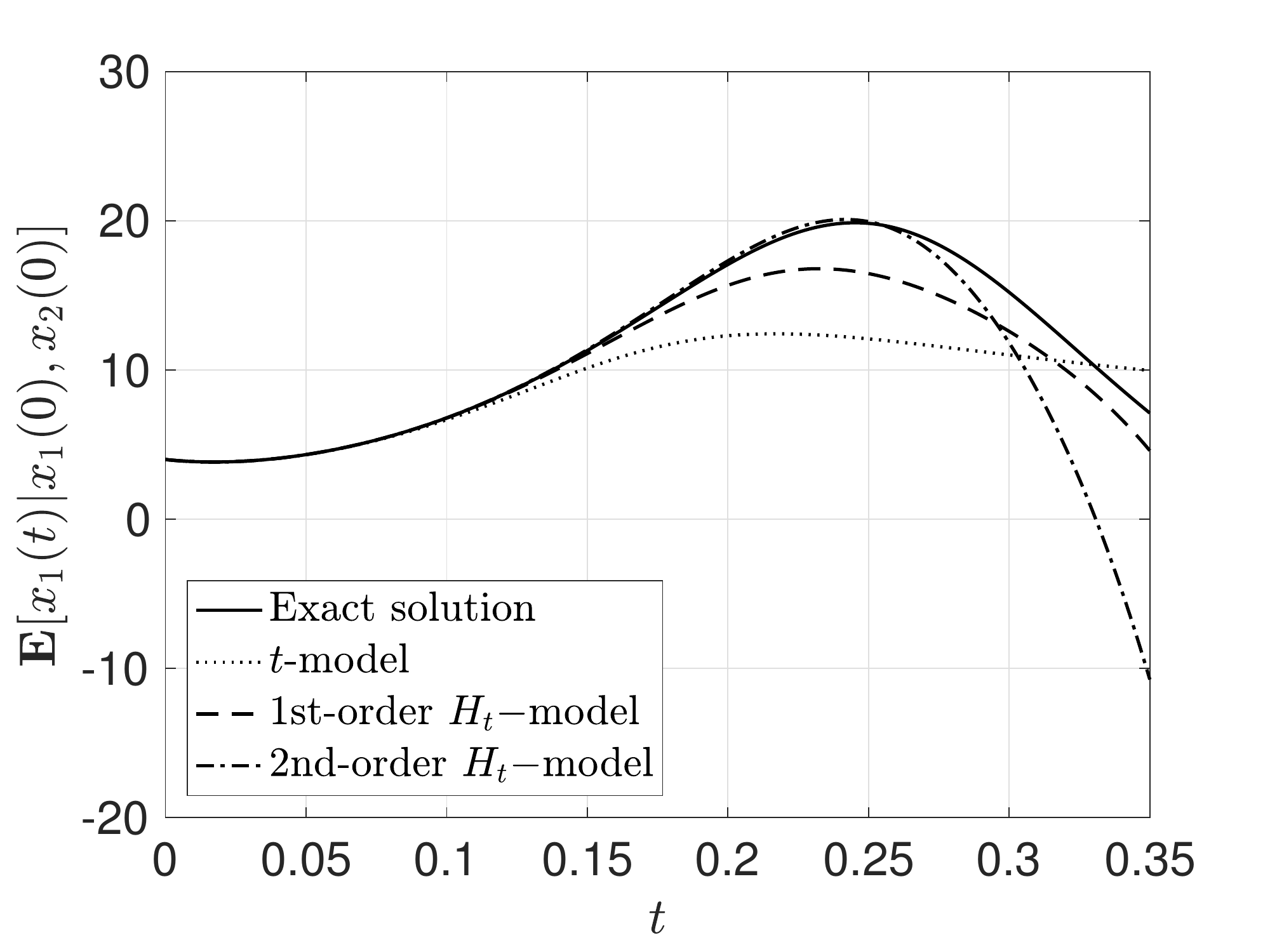}
\includegraphics[height=6cm]{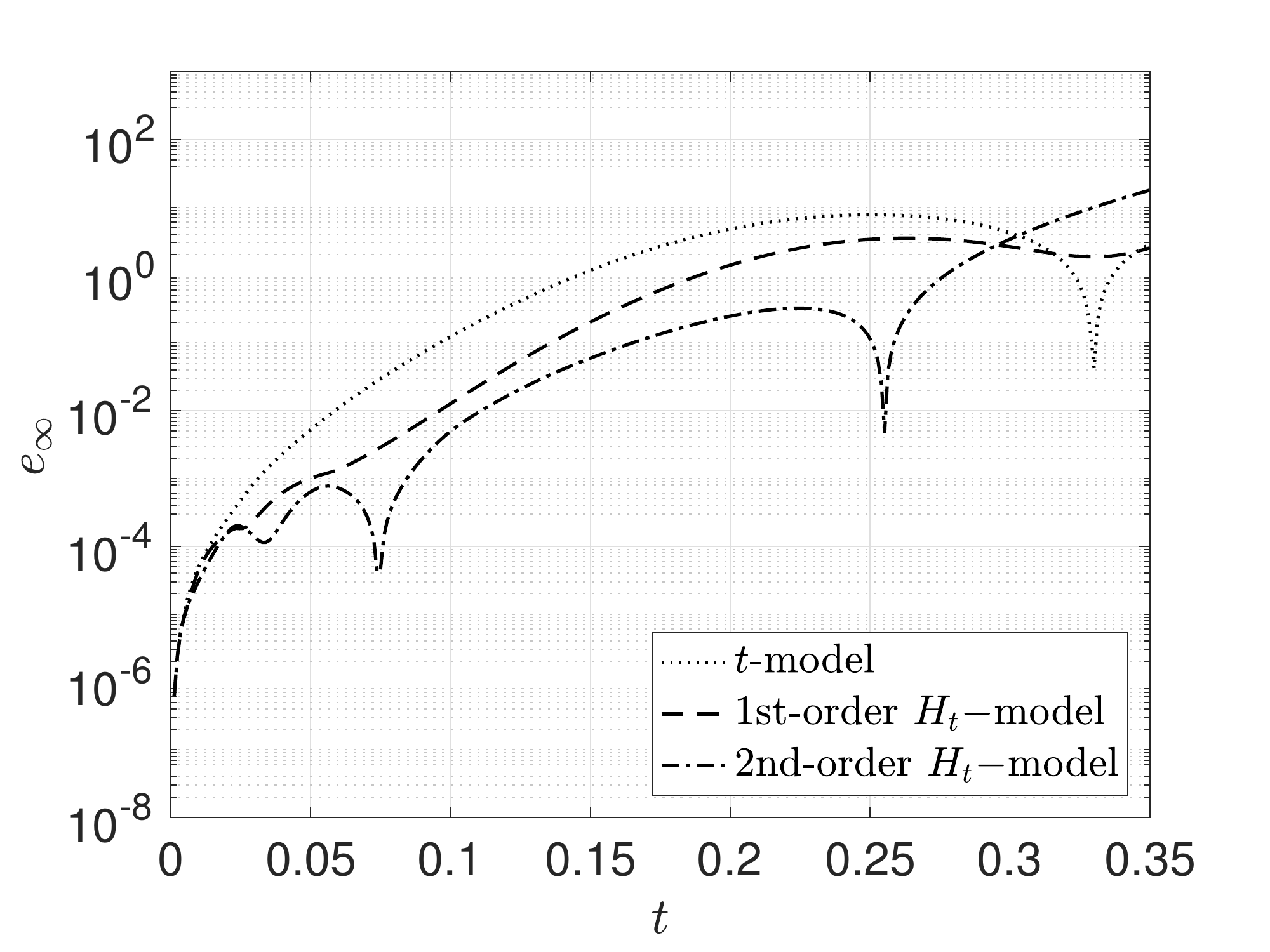}
}

\caption{Accuracy of the $H_t$ model in representing the conditional 
mean path in the Lorenz-63 system \eqref{lorenz_equation}. 
It is seen that if $r=0.5$ (first row), then the zeroth-order 
$H_t$-model, i.e., the $t$-model, is accurate for long integration times. 
On the other hand, if we consider the chaotic regime 
at $r=28$ (second row) then we see that the $t$-model 
and its high-order extension ($H_t$-model) are 
accurate only for relatively short time.}
\label{fig:lorenz_results}
\end{figure}
\noindent
As is well known, if  $r<1$, the fixed point $(0,0,0)$ 
is a global attractor and exponentially stable. 
In this case, the $t$-model (zeroth-order $H_t$-model) 
yields accurate prediction of the conditional mean path 
for long time (see Figure \ref{fig:lorenz_results}).
On the other hand, if we consider the chaotic regime at 
$r=28$ then the $t$-model and its higher-order extension, 
i.e., the $H_t$-model, are accurate only for 
relatively short time. This is in agreement with 
our theoretical predictions. In fact, 
different from linear systems where the hierarchical representation 
of the memory integral can be proven to be convergent for long time, in nonlinear systems the memory hierarchy is, in general,  
provably convergent only in a short time period (Theorem \ref{Thm_Decaying_error_Ht} and Corollary \ref{cor_decaying}). 
This doesn't mean that the $H$-model or the 
$H_t$-model are not accurate for nonlinear systems. 
It just means that the accuracy depends on the system, 
the quantity of interest, and the initial condition.

\paragraph{Modified Lorenz-96 system.}
\red{As an example of a high dimensional nonlinear dynamical system, we consider the following modified Lorenz-96 system \cite{Karimi,Lorenz96}}
\begin{align}
\begin{dcases}
\dot{x}_1=-x_1+x_1x_2+F\\
\dot{x}_2=-x_2+x_1x_3+F\\
\quad\vdots\\
\dot{x_i}=-x_i+(x_{i+1}-x_{i-2})x_{i-1}+F\\
\quad\vdots\\
\dot{x}_N=x_N-x_{N-2}x_{N-1}+F
\end{dcases}
\label{equ:lorenz96}
\end{align}
where $F$ is constant.  As is well known, depending on 
the values of $N$ and $F$ this system can exhibit a wide range 
of behaviors \cite{Karimi}.  
Suppose we take the resolved variables to be $\hat {x}=\{x_1,x_2\}$. Correspondingly, 
the unresolved ones, i.e., those we aim at integrating through the MZ framework, are $\tilde {x}=\{x_3,\dots ,x_N\}$, which we set to be independent standard normal random variables. 
By using the mean field approximation \eqref{MFC}, we obtain 
the following zeroth-order $H_t$-model ($t$-model) of the 
modified Lorenz-96 system is \eqref{equ:lorenz96} 
\begin{align}
\begin{dcases}
\dot{x}_{1m}=-x_{1m}+x_{1m}x_{2m}+F,\\
\dot{x}_{2m}=-x_{2m}+F+t(x_{1m}^2x_{2m}-x_{1m}F).\\
\end{dcases}
\label{lorenz96_mean_path_equation}
\end{align}
In Figure \ref{fig:lorenz_r05} we study the accuracy of 
the $H_t$-model in representing the conditional 
mean path for with $F=5$ and $N=100$. 
 It is seen that the the $H_t$-model 
converges only for short time (in agreement with the 
theoretical predictions) and it provides results that 
are more accurate that the classical $t$-model.

\begin{figure}[t]
\centerline{
\includegraphics[height=6cm]{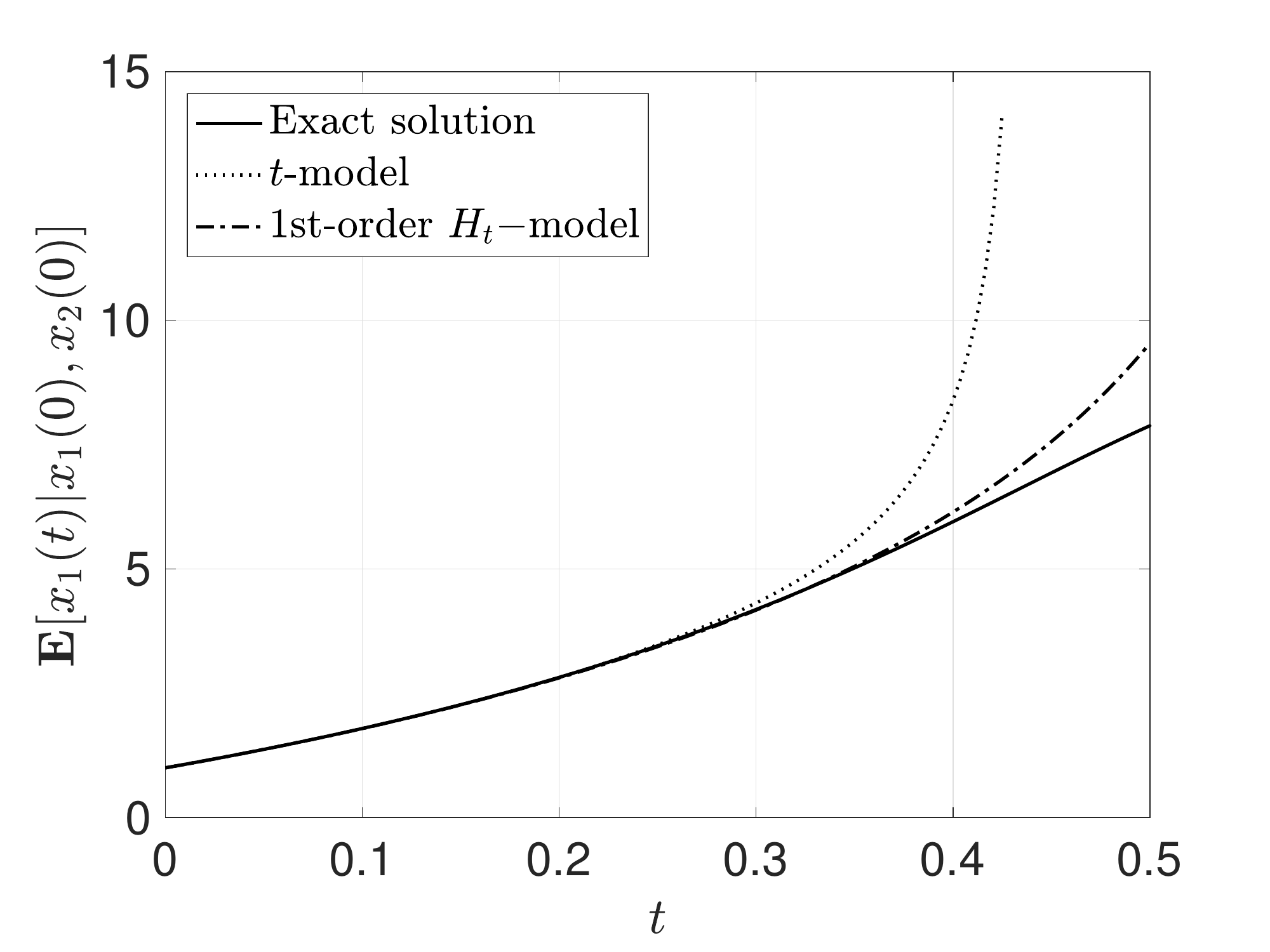} 
\includegraphics[height=6cm]{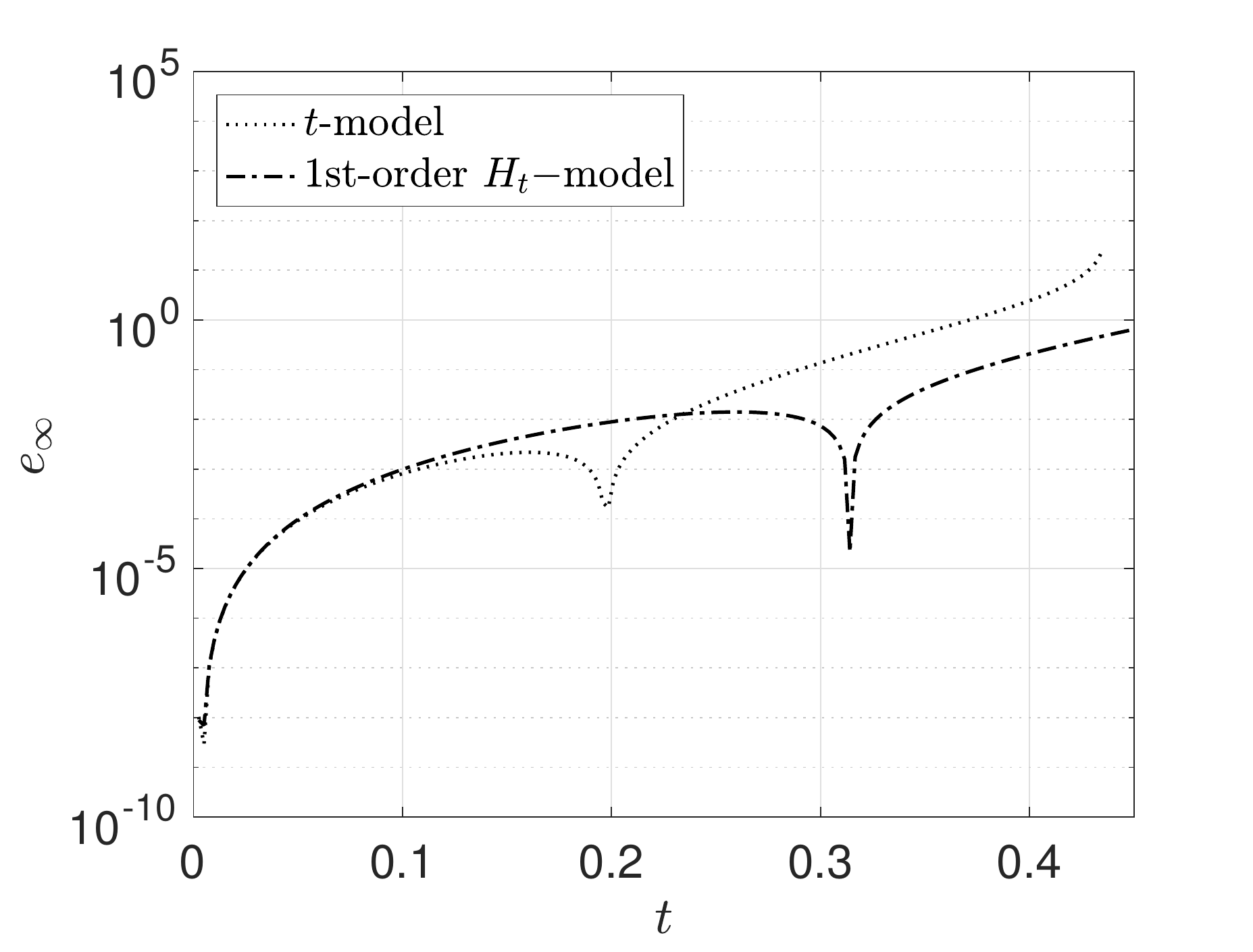}}

\caption{Accuracy of the $H_t$-model in representing the conditional 
mean path in the Lorenz-96 system \eqref{lorenz_equation}. 
Here we set $F=5$ and $N=100$.  It is seen that the $H_t$-model 
converges only for short time and provides results that 
are more accurate that the classical $t$-model.}
\label{fig:lorenz_r05}
\end{figure}

\section{Summary}
\label{sec:summary}
\red{
In this paper we developed a thorough 
mathematical analysis to deduce conditions 
for accuracy of different approximations of the memory integral 
in the Mori-Zwanzig equation, and, more importantly, 
whether the algorithms to approximate such memory 
integral converge. In particular, we studied the short memory 
approximation, the $t$-model and various hierarchical 
memory approximation techniques. We also derived computable 
upper bounds for the MZ memory integral, which 
allowed us to estimate a priori the contribution of the memory 
to the dynamics.
To the best of our knowledge, this is the first time 
rigorous convergence analysis is presented on 
approximations of the MZ memory integral. 
We found that for a given nonlinear dynamical 
system and quantity of interest, the approximation error
can be controlled by setting constraints 
on the initial condition of the system, i.e., by preparing 
the system appropriately. 
We have also established rigorous 
convergence results for hierarchical memory 
approximation methods such as the $H$-model, 
the Type-I and Type II finite memory approximations, 
and the $H_t$ model. These methods converge for 
any finite integration time in the case of linear 
dynamical systems. However, for general nonlinear systems, the memory 
approximation problem remains challenging and convergence 
of the hierarchical methods we discussed in this paper 
can be granted only for short time, or on a case-by-case 
basis. We presented simple numerical examples 
demonstrating convergence of the $H$-model and $H_t$-model for prototype linear and nonlinear  dynamical systems. 
The numerical results are found to be in agreement 
with the theoretical predictions.
}

\subsection*{Acknowledgements}
This work was supported by the Air Force Office of Scientific 
Research Grant No. FA9550-16-586-1-0092.

\appendix
\section{Semigroup Bounds via  Function Decomposition}
\label{sec:semigroupBoundsDecomposition}
\red{
In looking for the \emph{numerical abscissa} \cite{Davies2005} (i.e., the logarithmic norm) of $\LV\PrjC$, we seek to bound 
\begin{align*}
	\sup_{\mathcal{D}(\LV\PrjC)\ni x\neq 0}\Re\frac{\langle x, \LV\PrjC x\rangle_{\sigma}}{\langle x, x\rangle_{\sigma}}.
\end{align*}
Notice that, if $\Prj \LV \PrjC =0$, then $\LV\PrjC = \PrjC\LV\PrjC$, so that we have the previously proven bound 
\begin{align*}
	\sup\Re\frac{\langle x, \PrjC \LV \PrjC x \rangle_{\sigma}}{\langle x, x\rangle_{\sigma}} \leq -\frac{1}{2}\inf\Div_{\sigma}\mathbf{F}.
\end{align*}
In this section, we consider what happens when $\Prj \LV\PrjC \neq 0$.  To that end, let us note that $x\in\mathcal{D}(\LV\PrjC)$ may be decomposed as
\begin{align*}
	x = \PrjC x + \alpha \Prj \LV\PrjC x + \Prj y
\end{align*}
where $\alpha\in\mathbb{C}$ and $\Prj y$ is orthogonal to $\Prj\LV\PrjC x$.  In other words, we define $\Prj y$ as
\begin{align*}
	\Prj y := \Prj x - \frac{\langle \Prj\LV\PrjC x, \Prj x\rangle_{\sigma}}{\langle \Prj\LV\PrjC x, \Prj\LV\PrjC x\rangle_{\sigma}}\Prj\LV\PrjC x,
\end{align*} 
and define $\alpha$ as 
\begin{align*}
	\alpha := \frac{\langle \Prj\LV\PrjC x, \Prj x\rangle_{\sigma}}{\langle \Prj\LV\PrjC x, \Prj\LV\PrjC x\rangle_{\sigma}}.
\end{align*}
Then
\begin{subequations}
\begin{align*}
	\Re\frac{\langle x, \LV\PrjC x\rangle_{\sigma}}{\langle x, x\rangle_{\sigma}} & = \Re\frac{\langle\PrjC x + \alpha \Prj \LV\PrjC x + \Prj y, \LV\PrjC x\rangle_{\sigma}}{\langle x, x\rangle_{\sigma}},\\
	& = \frac{\Re\langle\PrjC x, \LV\PrjC x\rangle_{\sigma} + \Re(\alpha)\|\Prj \LV\PrjC x\|_{\sigma}^{2}}{\|\PrjC x\|_{\sigma}^{2} + |\alpha|^{2}\|\Prj \LV\PrjC x\|_{\sigma}^{2} + \|\Prj y\|_{\sigma}^{2}},\\
	& \leq \max\left[0,\;\frac{\Re\langle\PrjC x, \LV\PrjC x\rangle_{\sigma} + \Re(\alpha)\|\Prj \LV\PrjC x\|_{\sigma}^{2}}{\|\PrjC x\|_{\sigma}^{2} + |\alpha|^{2}\|\Prj \LV\PrjC x\|_{\sigma}^{2}}\right].
\end{align*}
\end{subequations}
Since we assume $\Prj \LV\PrjC \neq 0$, there exists $x$ such that $\Prj \LV\PrjC x \neq 0$ and then, for any $\alpha$ such that 
\begin{align*}
	\Re(\alpha) \geq \frac{\Re\langle\PrjC x, \LV\PrjC x\rangle_{\sigma}}{\|\Prj \LV\PrjC x\|_{\sigma}^{2}},
\end{align*}
we have
\begin{align*}
	\Re\langle\PrjC x, \LV\PrjC x\rangle_{\sigma} + \Re(\alpha)\|\Prj \LV\PrjC x\|_{\sigma}^{2} \geq 0,
\end{align*}
so that, for $\Prj \LV\PrjC \neq 0$,
\begin{subequations}
\begin{align*}
	\sup_{\mathcal{D}(\LV\PrjC)\ni x\neq 0}\Re\frac{\langle x, \LV\PrjC x\rangle_{\sigma}}{\langle x, x\rangle_{\sigma}}  = \sup_{\mathcal{D}(\LV\PrjC)\ni x\neq 0}\frac{\Re\langle\PrjC x, \LV\PrjC x\rangle_{\sigma} + \Re(\alpha)\|\Prj \LV\PrjC x\|_{\sigma}^{2}}{\|\PrjC x\|_{\sigma}^{2} + |\alpha|^{2}\|\Prj \LV\PrjC x\|_{\sigma}^{2}}.
\end{align*}
\end{subequations}
Now, fix any $\PrjC x\in\mathcal{D}(\LV)\neq 0$ and consider 
the expression
\begin{subequations}
\begin{align*}
	\frac{\Re\langle\PrjC x, \LV\PrjC x\rangle_{\sigma} + \Re(\alpha)\|\Prj \LV\PrjC x\|_{\sigma}^{2}}{\|\PrjC x\|_{\sigma}^{2} + |\alpha|^{2}\|\Prj \LV\PrjC x\|_{\sigma}^{2}}
	= \frac{\xi + \Re(\alpha)\beta^{2}}{1 + |\alpha|^{2}\beta^{2}}
\end{align*}
\end{subequations}
where
\begin{subequations}
\begin{align*}
	\xi  = \xi(\PrjC x) = \Re\frac{\langle\PrjC x, \LV\PrjC x\rangle_{\sigma}}{\|\PrjC x\|_{\sigma}^{2}}, \qquad  \beta  = \beta(\PrjC x) = \frac{\|\Prj \LV\PrjC x\|_{\sigma}}{\|\PrjC x\|_{\sigma}}.
\end{align*}
\end{subequations}
Then, for this fixed $\PrjC x$, 
\begin{align*}
	\frac{\xi + \Re(\alpha)\beta^{2}}{1 + |\alpha|^{2}\beta^{2}} \leq \max_{a\in\mathbb{R}}\frac{\xi + a\beta^{2}}{1 + a^{2}\beta^{2}}.
\end{align*}
Differentiating w.r.t. $a$ and setting equal to zero, we find that the latter expression is extremized when
\begin{align*}
	0 & = \beta^{2}(1+a^{2}\beta^{2}) - 2a\beta^{2}(\xi + a\beta^{2}),
\end{align*}
i.e., when
\begin{align*}
	\beta^{2}a^{2} + 2\xi a - 1  & = 0,
\qquad 
	a = \frac{-\xi \pm\sqrt{\xi^{2}+\beta^{2}}}{\beta^{2}}.
\end{align*}
Since $\beta^{2}> 0$,  $\displaystyle \frac{\xi + a\beta^{2}}{1 + a^{2}\beta^{2}} $ is maximized at $\displaystyle \hat{a} = \frac{-\xi +\sqrt{\xi^{2}+\beta^{2}}}{\beta^{2}}$. Then
\begin{subequations}
\begin{align*}
	\xi + \hat{a}\beta^{2} = \sqrt{\xi^{2}+\beta^{2}},\qquad
	1 + \hat{a}^{2}\beta^{2} = 2\left(1 - \xi \hat{a}\right) = 2\left(\frac{\xi^{2} + \beta^{2} - \xi\sqrt{\xi^{2} + \beta^{2}}}{\beta^{2}}\right),
\end{align*}
\end{subequations}
so that
\begin{subequations}
\begin{align*}
\max_{a\in\mathbb{R}}\frac{\xi + a\beta^{2}}{1 + a^{2}\beta^{2}} = \frac{\xi + \hat{a}\beta^{2}}{1 + \hat{a}^{2}\beta^{2}}
 = \frac{1}{2}\frac{\beta^{2}}{\sqrt{\xi^{2} + \beta^{2}} - \xi}
 = \frac{1}{2}\left(\sqrt{\xi^{2} + \beta^{2}} + \xi\right).
\end{align*}
\end{subequations}
Therefore,
\begin{align*}
	\sup_{\mathcal{D}(\LV\PrjC)\ni x\neq 0}\Re\frac{\langle x, \LV\PrjC x\rangle_{\sigma}}{\langle x, x\rangle_{\sigma}} = \sup_{\mathcal{D}(\LV)\ni (\PrjC x)\neq 0}\frac{1}{2}\left[\sqrt{\xi^{2}(\PrjC x) + \beta^{2}(\PrjC x)} + \xi(\PrjC x)\right].
\end{align*}
When $\Prj\LV\PrjC$ is unbounded, which is the typical case when $\Prj$ is an infinite-rank projection, such as most conditional expectations, there is unlikely to be a finite numerical abscissa for $\LV\PrjC$.  In particular, notice that if $\Div_{\sigma}({F})$ is a bounded function (bounded both above and below), then $\xi(\PrjC x)$ is bounded for all $\PrjC x$ while $\beta(\PrjC x)$ is unbounded, in which case
\begin{align*}
	\sup_{\mathcal{D}(\LV\PrjC)\ni x\neq 0}\Re\frac{\langle x, \LV\PrjC x\rangle_{\sigma}}{\langle x, x\rangle_{\sigma}} = \infty.
\end{align*}
It follows \cite{Trefethen1997, Pazy1992} that in these cases, $\|e^{t\LV\PrjC}\|_{\sigma}$ has infinite slope at $t = 0$, and therefore 
there is no finite $\omega$ such that $\|e^{t\LV\PrjC}\|_{\sigma} \leq e^{\omega t}$ for all $t\geq 0$ (see \cite{Davies2005}).  
Assuming still that $\LV\PrjC$ generates a strongly continuous semigroup, we must look to bound the semigroup as $\|e^{t\LV\PrjC}\|_{\sigma} \leq Me^{\omega t}$, where 
\begin{align*}
	\omega > \omega_{0} = \lim_{t\to\infty}\frac{\ln\|e^{t\LV\PrjC}\|_{\sigma}}{t}
\end{align*}
and 
\begin{align*}
	M \geq M(\omega) = \sup\{\|e^{t\LV\PrjC}\|_{\sigma}e^{-\omega t} : t\geq0\}.
\end{align*}
On the other hand, if $\Prj\LV\PrjC$ is a bounded operator, for example when $\Prj$ is a finite-rank projection (e.g., Mori's projection \eqref{MoriProjection}), there exists a finite value for the numerical abscissa.  Indeed, in this case, since $\zeta$ is bounded by $\omega = -\inf\Div_{\sigma}({F})$, the numerical abscissa of $\LV\PrjC$ may be bounded as
\begin{align}
	\omega_{\LV\PrjC} := \sup_{\mathcal{D}(\LV\PrjC)\ni x\neq 0}\Re\frac{\langle x, \LV\PrjC x\rangle_{\sigma}}{\langle x, x\rangle_{\sigma}} \leq \frac{1}{2}\left[\sqrt{\omega^{2} + \|\Prj\LV\PrjC \|_{\sigma}^{2}} + \omega\right]. \label{eqn:LQNumAbsBound1}
\end{align}
Alternatively, in the case of finite rank $\Prj$, the operator $\LV\PrjC$ may be thought of as a bounded perturbation of $\LV$, i.e. $\LV\PrjC = \LV - \LV\PrjC$, and the numerical abscissa of $\LV\PrjC$ can be bounded using the bounded perturbation theorem \cite[III.1.3]{engel1999one}, obtaining
\begin{align}
	\omega_{\LV\PrjC} \leq \omega + \|\LV\Prj\|_{\sigma}.\label{eqn:LQNumAbsBound2}
\end{align}
Either of these bounds for $\omega_{\LV\PrjC}$ can be used to bound the semigroup norm
\begin{subequations}
\begin{align*}
	\|e^{t\LV\PrjC}\|_{\sigma} & \leq e^{\omega_{\LV\PrjC}t} \leq e^{\frac{1}{2}\left(\sqrt{\omega^{2} + \|\Prj\LV\PrjC \|_{\sigma}^{2}} + \omega\right)t}\\
	\|e^{t\LV\PrjC}\|_{\sigma} & \leq e^{\omega_{\LV\PrjC}t} \leq e^{(\omega + \|\LV\Prj\|_{\sigma})t}.	
\end{align*}
\end{subequations}
Which of these two estimates gives the tighter bound will generally depend on the values of $\|\Prj\LV\PrjC\|_{\sigma}$ and $\|\LV\Prj \|_{\sigma}$.  It may be noted, however, that, when $\sigma$ is invariant, $\omega = 0$ and $\LV$ is skew-adjoint, so that 
\begin{align*}
\|\Prj\LV\PrjC\|_{\sigma} = \|\PrjC\LV^{\dag}\Prj\|_{\sigma} = \|\PrjC\LV\Prj\|_{\sigma} \leq \|\LV\Prj\|_{\sigma}
\end{align*}
and therefore the bound in \eqref{eqn:LQNumAbsBound1} is half that of \eqref{eqn:LQNumAbsBound2}. }
\section{The Mori-Zwanzig Formulation in PDF Space}
\label{app:MZPDF}
It was shown in \cite{Dominy2017} that the Banach dual 
of \eqref{MZKoop} defines an evolution in the probability 
density function space. Specifically, the joint probability 
density function of the state vector $u(t)$ that solves equation 
\eqref{eqn:nonautonODE} is pushed forward by the 
Frobenius-Perron operator $\F(t,0)$ (Banach dual of the Koopman
operator \eqref{Koopman})
\begin{equation}
 p(x,t)= \F(t,s) p(x,s), \qquad \F(t,s)=e^{(t-s)\M},
\end{equation}
where 
\begin{equation}
\M(x) p(x,t) =-\nabla\cdot(F(x) p(x,t)).
\end{equation}
By introducing a projection $\P$ in the space of probability density 
functions\footnote{With some abuse of notation we denote the 
projections $\P$ and $\Q$ in the PDF space with the same letter we 
used for projections in the phase space.} and 
its complement $\Q=\I-\P$, it is easy to show
that the projected density $\P p$ satisfies the MZ 
equation \cite{venturi2014convolutionless}
\begin{align}
\frac{\partial \mathcal{P}p(t)}{\partial t}=\mathcal{PMP}p(t)+
\mathcal{P}e^{t\mathcal{QM}}\mathcal{Q}p(0)+
\int_0^t \mathcal{PM}e^{(t-s)\mathcal{QM}}\mathcal{QMP}p(s) ds.
\label{mzpdf1}
\end{align}
In the next sections we perform an analysis of 
different types of approximations of the 
MZ memory integral
\begin{equation}
\int_0^t \mathcal{PM}e^{(t-s)\mathcal{QM}}\mathcal{QMP}p(s) ds.
\label{MemoryPDFSpace}
\end{equation}
The main objective of such analysis is to establish 
rigorous error bounds for widely used approximation 
methods, and also propose new  provably convergent
approximation schemes.

\subsection{Analysis of the Memory Integral}
\label{sec:MZPDFspace}
In this section, we develop the analysis of the memory integral arising 
in the PDF formulation of the MZ equation. 
The starting point is the definition \eqref{MemoryPDFSpace}. 
As before, we begin with the following estimate of upper 
bound estimation of the integral
\begin{theorem}
\label{PDFMgrowth}
{\bf (Memory growth)}
Let $e^{t\mathcal{MQ}}$ and $e^{t\M}$ be 
strongly continuous semigroups with upper bounds 
$\|e^{t\M}\|\leq Me^{t\omega}$ and $\|e^{t\M\Q}\|\leq M_{{\Q}}e^{t\omega_{\Q}}$, and let $T>0$ be a fixed integration time.  Then for any $0\leq t\leq T$ we have
\begin{align*}
\bigg\|\int_0^t
\mathcal{PM}e^{(t-s)\mathcal{Q\M}}
\mathcal{Q\M P}p(s) ds\bigg\|\leq N_0(t),
\end{align*}
where 
\begin{align*}
N_0(t) = \begin{dcases}
tC_4,\quad &\omega_{\Q}=0;\\
\frac{C_4}{\omega_{\Q}}(e^{t\omega_{\Q}}-1)
,\quad &\omega_{\Q}\neq 0;
\end{dcases}
\end{align*}
and $\displaystyle C_{4}=\max_{0\leq s\leq T}\|\P\|\|\M\P\M p(s)\|$.  Moreover, $N(t)$ satisfies $\displaystyle\lim_{t\rightarrow0}N(t)=0$.
\end{theorem}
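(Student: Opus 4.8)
The plan is to follow the proof of Theorem~\ref{memory_theorem} almost verbatim, the one structural difference being that the PDF-space integrand carries no free Frobenius--Perron factor $e^{s\M}$ on the left: here the entire $s$-dependence sits in the density $p(s)$, which is why the final bound $N_0(t)$ involves only $\omega_{\Q}$ (and not $\omega$) together with a constant built from $p$. First I would move the norm inside the integral,
\begin{align*}
\left\|\int_0^t \P\M e^{(t-s)\Q\M}\Q\M\P\, p(s)\,ds\right\|\leq \int_0^t \left\|\P\M e^{(t-s)\Q\M}\Q\M\P\, p(s)\right\|ds,
\end{align*}
reducing the task to a pointwise estimate of the integrand for each fixed $s\in[0,t]$.

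The key step is to replace the semigroup $e^{(t-s)\Q\M}$, for which no bound is assumed, by $e^{(t-s)\M\Q}$, for which the hypothesis supplies $\|e^{\tau\M\Q}\|\leq M_{\Q}e^{\tau\omega_{\Q}}$. This is accomplished with the intertwining identity $\M e^{\tau\Q\M}=e^{\tau\M\Q}\M$, valid term by term since $\M(\Q\M)^n=(\M\Q)^n\M$. Taking $\tau=t-s$ recasts the integrand as $\P e^{(t-s)\M\Q}\M\Q\M\P\, p(s)$, and submultiplicativity then yields
\begin{align*}
\left\|\P e^{(t-s)\M\Q}\M\Q\M\P\, p(s)\right\|\leq \|\P\|\,\|e^{(t-s)\M\Q}\|\,\|\M\Q\M\P\, p(s)\|\leq M_{\Q}e^{(t-s)\omega_{\Q}}\,\|\P\|\,\|\M\Q\M\P\, p(s)\|.
\end{align*}
I would then absorb the semigroup constant $M_{\Q}$ together with the maximum of $\|\P\|\,\|\M\Q\M\P\, p(s)\|$ over the compact interval $[0,T]$ into the single constant $C_4$; finiteness of this maximum uses continuity of $s\mapsto p(s)$ and that $p(s)$ lies in a suitable domain.

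What remains is the elementary quadrature $\int_0^t e^{(t-s)\omega_{\Q}}ds=\int_0^t e^{u\omega_{\Q}}du$, which equals $t$ when $\omega_{\Q}=0$ and $(e^{t\omega_{\Q}}-1)/\omega_{\Q}$ when $\omega_{\Q}\neq0$; multiplying by $C_4$ reproduces the two branches of $N_0(t)$, and the limit $\lim_{t\to0}N_0(t)=0$ follows from $tC_4\to0$ in the first case and $(e^{t\omega_{\Q}}-1)/\omega_{\Q}\to0$ in the second. The one genuinely delicate point, as opposed to the routine bookkeeping, is the legitimacy of the operator manipulations when $\M$ is unbounded: the intertwining identity must be justified on a dense domain containing $\Q\M\P\,p(s)$, and one needs $e^{(t-s)\Q\M}$ to be a well-defined strongly continuous semigroup so that the Bochner integral exists and the estimate is meaningful. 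I expect this to be the main obstacle; everything after the shift identity is the same chain of norm estimates and integrations used in Theorem~\ref{memory_theorem}.
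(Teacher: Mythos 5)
Your proposal follows essentially the same route as the paper's proof: the same intertwining identity $\M e^{(t-s)\Q\M}=e^{(t-s)\M\Q}\M$ to trade the unbounded-generator semigroup for the one with an assumed bound, the same submultiplicative norm estimate, and the same quadrature of $\int_0^t e^{(t-s)\omega_{\Q}}\,ds$ giving the two branches of $N_0(t)$. If anything, your bookkeeping is slightly cleaner, since you absorb $M_{\Q}$ into $C_4$, whereas the paper's proof carries $C_4 M_{\Q}$ in its intermediate bound and then silently drops the $M_{\Q}$ factor in the final two-case expression.
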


\begin{proof}
Consider 
\begin{align*}
\bigg\|\int_0^t
\mathcal{PM}e^{(t-s)\mathcal{Q\M}}
\mathcal{Q\M P}p(s)ds\bigg\| & = \bigg\|\int_0^t
\P e^{(t-s)\M\Q}
\M\mathcal{Q\M P}p(s) ds\bigg\|\\
& 
\leq 
C_4 M_{\Q}\int_0^te^{(t-s)\omega_{\Q}}
ds
\\
&=
\begin{dcases}
tC_4,\quad &\omega_{\Q}=0\\
\frac{C_4}{\omega_{\Q}}(e^{t\omega_{\Q}} - 1)
,\quad &\omega_{\Q}\neq 0
\end{dcases}
\end{align*}
where $\displaystyle C_4=\max_{0\leq s\leq T}\|\P\|\|\M\Q\M\P p(s)\|$. 

\end{proof}

\begin{theorem}\label{PDF_t-model}
{\bf (Memory approximation via the $t$-model)}
Let $e^{t\mathcal{MQ}}$ and $e^{t\M}$ be 
strongly continuous semigroups with bounds 
$\|e^{t\M}\|\leq Me^{t\omega}$ and $\|e^{t\M\Q}\|\leq M_{{\Q}}e^{t\omega_{\Q}}$, and let $T>0$ be a fixed integration time. If the function $k(s,t)=\mathcal{P\M  }e^{(t-s)\mathcal{Q\M  }}\mathcal{Q\M  P}p(s)$ (integrand of the memory term) is at least twice differentiable respect to $s$ for all $t\geq 0$, then 
\begin{align*}
\bigg\|\int_0^t
\mathcal{P\M  }e^{(t-s)\mathcal{Q\M  }}
\mathcal{Q\M  P}p(s)ds-t\mathcal{P\M  Q\M  P}p(t)\bigg\|\leq N_{1}(t)
\end{align*}
where $N_1(t)$ is defined as 
\begin{align*}
 N_1(t)=\begin{dcases}
t(M_{\Q}+1)C_4,\quad &\omega_{\Q}=0\\
\frac{C_2M_{\Q}}{\omega_{\Q}}(e^{t\omega_{\Q}}-1)+
tC_4,\quad &\omega_{\Q}\neq 0
\end{dcases}
\end{align*}
and $C_4$ is as in Theorem \ref{PDFMgrowth}.
\end{theorem}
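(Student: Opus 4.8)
The plan is to follow the template already used for the phase-space $t$-model (Theorem~\ref{t-model_estimation}) and the PDF-space memory growth bound (Theorem~\ref{PDFMgrowth}): estimate the memory integral and the $t$-model term separately by the triangle inequality, rather than by invoking the twice-differentiability hypothesis. First I would note that the $t$-model term is exactly the integrand $k(s,t)=\mathcal{PM}e^{(t-s)\mathcal{QM}}\mathcal{QMP}p(s)$ evaluated at the right endpoint $s=t$, since $k(t,t)=\mathcal{PMQMP}p(t)$. Hence the quantity to bound is
\[
\left\|\int_0^t k(s,t)\,ds - t\,k(t,t)\right\|\le \int_0^t\|k(s,t)\|\,ds + t\,\|k(t,t)\|.
\]
As in Theorem~\ref{PDFMgrowth}, I would rewrite the integrand with the commutation identity $\M e^{(t-s)\Q\M}=e^{(t-s)\M\Q}\M$, giving $k(s,t)=\P e^{(t-s)\M\Q}\M\mathcal{QMP}p(s)$, which exposes the semigroup $e^{t\M\Q}$ whose hypothesized bound $\|e^{t\M\Q}\|\le M_{\Q}e^{t\omega_{\Q}}$ is available.

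Second, I would bound the two pieces. Using $\|\P\|\,\|\M\mathcal{QMP}p(s)\|\le C_4$ together with the semigroup estimate $\|e^{(t-s)\M\Q}\|\le M_{\Q}e^{(t-s)\omega_{\Q}}$ yields
\[
\int_0^t\|k(s,t)\|\,ds\le C_4 M_{\Q}\int_0^t e^{(t-s)\omega_{\Q}}\,ds,
\]
where $C_4=\max_{0\le s\le T}\|\P\|\,\|\M\mathcal{QMP}p(s)\|$ as in the proof of Theorem~\ref{PDFMgrowth}; the scalar integral equals $t$ if $\omega_{\Q}=0$ and $(e^{t\omega_{\Q}}-1)/\omega_{\Q}$ otherwise. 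The $t$-model term is handled directly: $t\,\|k(t,t)\|=t\,\|\P\M\mathcal{QMP}p(t)\|\le tC_4$. Adding these reproduces $N_1(t)$ in both cases (the constant written as $C_2$ in the statement should read $C_4$), and gives $\lim_{t\to0}N_1(t)=0$, the desired short-time convergence.

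The main obstacle is not analytic difficulty but the bookkeeping with the unbounded generators: I would need to justify the commutation identity $\M e^{(t-s)\Q\M}=e^{(t-s)\M\Q}\M$ on the relevant subspace, i.e. verify that $\mathcal{QMP}p(s)$ sits in the appropriate domains so that the identity and the semigroup bound for $e^{t\M\Q}$ both apply---exactly the same domain issue already implicit in Theorem~\ref{PDFMgrowth}. I would also point out that the twice-differentiability of $k(s,t)$ plays no role in this crude, linear-in-$t$ estimate; it would be needed only to extract a sharper $O(t^2)$ bound from a Taylor expansion of $k(s,t)-k(t,t)$ about $s=t$, which is not what $N_1(t)$ asserts.
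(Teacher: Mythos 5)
Your proposal is correct and follows essentially the same route as the paper's own proof: a triangle-inequality split into the memory integral (bounded via the commutation identity and the semigroup estimate for $e^{t\M\Q}$, exactly as in Theorem~\ref{PDFMgrowth}) plus the term $t\|\P\M\Q\M\P p(t)\|\leq tC_4$. Your two side remarks are also accurate — the $C_2$ in the statement is indeed a typo for $C_4$ (the paper's proof uses $C_4$), and the twice-differentiability hypothesis is never used in the paper's argument either.
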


\begin{proof}
\begin{align*}
\bigg\|\int_0^t
\mathcal{PM}e^{(t-s)\mathcal{QM}}
\mathcal{QMP}p(s)ds-t\mathcal{PMQMP}p(t)\bigg\|
&\leq C_4 M_{\Q}\int_0^t e^{(t-s)\omega_{\Q}}ds+C_4t\\
& =\begin{dcases}
t(M_{\Q}+1)C_4,\quad &\omega_{\Q}=0\\
\frac{C_4M_{\Q}}{\omega_{\Q}}(e^{t\omega_{\Q}}-1)+
tC_4,\quad &\omega_{\Q}\neq 0
\end{dcases}
\end{align*}
\end{proof}

\subsection{Hierarchical Memory Approximation in PDF Space}
The hierarchical memory approximation methods we discussed in 
section \ref{sec:hierarchical} can be also developed in the PDF space. 
To this end, let us first define 
\begin{align}
v_0(t)=\int_0^t
\mathcal{P\M  }e^{(t-s)\mathcal{Q\M  }}
\mathcal{Q\M  P}p(s)ds.
\label{memoryPDF}
\end{align}
By repeatedly differentiating $v_0(t)$ with respect to time (assuming 
$v_0(t)$ smooth enough) we obtain the hierarchy of equations
\begin{align*}
\frac{\partial}{\partial t}v_{i-1}(t)=\mathcal{P\M  }(\mathcal{Q\M  })^i\mathcal{P}p(t)
+ v_{i}(t)\qquad i=1,\dots ,n
\end{align*}
where,
\begin{align*}
v_i(t)=\int_{0}^t\mathcal{P\M  }
e^{(t-s)\mathcal{Q\M  }}(\mathcal{Q\M  })^{i+1}\mathcal{P}p(s)ds.
\end{align*}
By following closely the discussion in section \ref{sec:hierarchical} we introduce the hierarchy of memory equations 
\begin{equation}
\begin{cases}
\displaystyle \frac{dv_0^n(t)}{dt} =
\mathcal{P\M  }\mathcal{Q\M  }\mathcal{P}p(t)+v_1^n(t)\vspace{0.2cm}\\
\displaystyle\frac{dv_1^n(t)}{dt} =
\mathcal{P\M  }(\mathcal{Q\M  })^2\mathcal{P}p(t)+v_2^n(t)\\
\hspace{0.5cm}\vdots\\
\displaystyle\frac{dv_{n-1}^n(t)}{dt} =
\mathcal{P\M  }(\mathcal{Q\M  })^n\mathcal{P}p(t)+v_n^{e_n}(t)
\end{cases}
\label{hier_equation_PDF}
\end{equation}
and approximate the last term in such hierarchy in different ways. 
Specifically, we consider  
\begin{align*}
v_n^{e_n}(t)& =\int_{t}^t\mathcal{P\M  }e^{(t-s)\mathcal{Q\M}}
(\mathcal{Q\M})^{n+1}\mathcal{P}p(s)ds=0&&\qquad(\text{$H$-model}),\\
v_n^{e_n}(t)& =\int_{\max(0,t-\Delta t)}^t\mathcal{P\M  }e^{(t-s)\mathcal{Q\M}}
(\mathcal{Q\M})^{n+1}\mathcal{P}p(s)ds&&\qquad(\text{Type \rom{1} Finite Memory Approximation}),\\
v_n^{e_n}(t)& =\int_{\min(t,t_n)}^t\mathcal{P\M  }e^{(t-s)\mathcal{Q\M}}
(\mathcal{Q\M})^{n+1}\mathcal{P}p(s)ds&&\qquad(\text{Type \rom{2} Finite Memory Approximation}),\\
v_n^{en}(t)& =t\P\M(\Q\M)^{n+1}\P p(t)&&\qquad(\text{$H_t$-model}).
\end{align*}
Hereafter we establish the accuracy of the approximation schemes
resulting from the substitution of each $v_n^{e_n}(t)$ above into \eqref{hier_equation_PDF}. 

\begin{theorem}\label{Thm_TA_PDF}
{\bf(Accuracy of the $H$-model)}
Let $e^{t\M}$ and $e^{t\M\Q}$ be strongly continuous semigroups, $T>0$ a fixed integration time, and  
\begin{align}
\beta_i=\frac{\displaystyle \sup_{s\in[0,T]}\|(\M\Q)^{i+1}\M\P p(s)\|}{\displaystyle \sup_{s\in[0,T]}\|(\M\Q)^{i}\M\P p(s)\|}, \qquad 1\leq i\leq n.
\label{condPDF}
\end{align}
Then for $1\leq q\leq n$ we have 
\begin{align*}
\|v_0(t)-v_0^q(t)\|\leq N_2^{q}(t),
\end{align*}
where $$N_2^{q}(t)= A_{2}C_{4}\left(\prod_{i=1}^{q}\beta_{i}\right)\frac{t^{q+1}}{(q+1)!}, $$
$\displaystyle A_2=\max_{s\in[0,T]}e^{s\omega_{\PrjC}} $, and 
$C_4$ is as in Theorem \ref{PDFMgrowth}.
\end{theorem}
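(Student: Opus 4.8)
The plan is to mirror the phase-space argument of Theorem \ref{Thm_Decaying_error}, with the key structural adjustment that the time-dependent density $p(s)$ now sits inside the integrand in place of a fixed datum; this is exactly why $\beta_i$ and $C_4$ are defined through suprema over $[0,T]$. First I would express the error exactly. Since the $H$-model truncates the hierarchy \eqref{hier_equation_PDF} by setting $v_q^{e_q}=0$, backward substitution through the truncated system shows that the only surviving contribution to $v_0(t)-v_0^q(t)$ is the $q$-fold iterated time integral of $v_q$, i.e.
\[
v_0(t)-v_0^q(t)=\int_0^t\!\!\int_0^{\tau_q}\!\!\cdots\!\int_0^{\tau_2}\!\!\int_0^{\tau_1}\mathcal{PM}e^{(\tau_1-s)\mathcal{QM}}(\mathcal{QM})^{q+1}\mathcal{P}p(s)\,ds\,d\tau_1\cdots d\tau_q.
\]
Applying Cauchy's formula for repeated integration collapses the $q$ outer integrals into the single kernel $\tfrac{(t-\sigma)^{q-1}}{(q-1)!}$ together with the inner $s$-integral, exactly as in \eqref{esti}.

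Next I would rewrite the integrand using the intertwining identities $\mathcal{M}e^{(\sigma-s)\mathcal{QM}}=e^{(\sigma-s)\mathcal{MQ}}\mathcal{M}$ and $\mathcal{M}(\mathcal{QM})^{q+1}=(\mathcal{MQ})^{q+1}\mathcal{M}$, so that it takes the form $\mathcal{P}e^{(\sigma-s)\mathcal{MQ}}(\mathcal{MQ})^{q+1}\mathcal{MP}p(s)$. Bounding the norm with $\|\mathcal{P}\|$ and the semigroup estimate $\|e^{(\sigma-s)\mathcal{MQ}}\|\le M_{\Q}e^{(\sigma-s)\omega_{\Q}}$, I would then pull out the uniform quantity $\sup_{s\in[0,T]}\|(\mathcal{MQ})^{q+1}\mathcal{MP}p(s)\|$. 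The telescoping product $\prod_{i=1}^q\beta_i=\tfrac{\sup_s\|(\mathcal{MQ})^{q+1}\mathcal{MP}p(s)\|}{\sup_s\|(\mathcal{MQ})\mathcal{MP}p(s)\|}$ together with the definition $C_4=\|\mathcal{P}\|\sup_s\|\mathcal{MQMP}p(s)\|$ gives the identification $\|\mathcal{P}\|\sup_s\|(\mathcal{MQ})^{q+1}\mathcal{MP}p(s)\|=C_4\prod_{i=1}^q\beta_i$ (the factor $M_{\Q}$ being suppressed here consistently with Theorem \ref{PDFMgrowth}).

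Finally I would close the estimate by bounding the exponential factor $e^{(\sigma-s)\omega_{\Q}}\le A_2$ uniformly on $[0,T]$ and evaluating the elementary remaining double integral via the Beta-function identity,
\[
\int_0^t\frac{(t-\sigma)^{q-1}}{(q-1)!}\int_0^{\sigma}ds\,d\sigma=\frac{1}{(q-1)!}\int_0^t(t-\sigma)^{q-1}\sigma\,d\sigma=\frac{t^{q+1}}{(q+1)!},
\]
which yields $N_2^q(t)=A_2C_4\bigl(\prod_{i=1}^q\beta_i\bigr)\tfrac{t^{q+1}}{(q+1)!}$.

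The main obstacle, compared to the phase-space proof, is the $s$-dependence of $p(s)$. In Theorem \ref{Thm_Decaying_error} the ratios $\alpha_j$ arise from a single fixed observable $u_0$, so the product telescopes trivially; here the analogous quantities $\|(\mathcal{MQ})^i\mathcal{MP}p(s)\|$ vary with $s$, and the telescoping of the ratios $\beta_i$ must be carried out at the level of the sup-norms $\sup_{s\in[0,T]}\|(\mathcal{MQ})^i\mathcal{MP}p(s)\|$. The work is in justifying that replacing the pointwise-in-$s$ quantities by their suprema over $[0,T]$ is legitimate \emph{uniformly} across every level of the hierarchy, so that the product of consecutive ratios collapses cleanly to the single numerator $\sup_s\|(\mathcal{MQ})^{q+1}\mathcal{MP}p(s)\|$ while leaving $C_4$ as the common denominator.
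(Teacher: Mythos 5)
Your proof is correct and takes essentially the same route as the paper's: the exact error representation by backward substitution of the truncated hierarchy, the intertwining identities $\M e^{\tau\Q\M}=e^{\tau\M\Q}\M$ and $\M(\Q\M)^{q+1}=(\M\Q)^{q+1}\M$, the uniform bound $e^{(\sigma-s)\omega_{\Q}}\leq A_2$, the telescoping of the $\beta_i$ against $C_4$, and the $t^{q+1}/(q+1)!$ time integral (the paper simply bounds the iterated integral directly rather than invoking Cauchy's formula, and likewise suppresses $M_{\Q}$, so your treatment is consistent with it). The ``main obstacle'' you flag at the end is in fact immaterial: the supremum replacement is needed only once, for the single integrand $(\M\Q)^{q+1}\M\P p(s)$ with $s\in[0,\tau_1]\subseteq[0,T]$, after which the product of the $\beta_i$ (being ratios of sup-norms by definition) telescopes as a purely algebraic identity.
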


\begin{proof}
The error at the $n$-th level can be bounded as 
\begin{align*}
\|v_0(t)-v_0^q(t)\|&\leq
\int_0^t\int_0^{\tau_q}\dots \int_0^{\tau_2}\|\P\M e^{(\tau_1-s)\Q\M}(\Q\M)^{p+1}\P p(s) \|ds d \tau_1\dots  d\tau_q\\
&\leq 
	A_{2}C_{4}\left(\prod_{i=1}^{q}\beta_{i}\right)\frac{t^{q+1}}{(q+1)!},
\end{align*}
where
\begin{align}
	A_{2} & = \max_{s\in[0,T]}e^{s\omega_{\PrjC}} = \begin{cases}1 &  \omega_{\PrjC} \leq 0\\e^{T\omega_{\PrjC}} & \omega_{\PrjC}\geq 0\end{cases}
.
\end{align}
Let 
\begin{align}
\beta_{i} & = \frac{\displaystyle \sup_{s\in[0,T]}\|(\M\Q)^{i+1}\M\P p(s)\|}{\displaystyle\sup_{s\in[0,T]}\|(\M\Q)^{i}\M\P p(s)\|},
\end{align}
under the assumption that these quantities are finite. Then we have 
\begin{align*}
\|v_0(t)-v_0^q(t)\|\leq A_{2}C_{4}\left(\prod_{i=1}^{q}\beta_{i}\right)\frac{t^{q+1}}{(q+1)!}.
\end{align*}

\end{proof}

\begin{corollary}\label{coro_PDF_sequence}
{\bf (Uniform convergence of the $H$-model)}
If $\beta_i$ in Theorem \ref{Thm_TA_PDF}  satisfy
\begin{align*}
\beta_i<\frac{i+1}{T},\quad 1\leq i\leq n
\end{align*}
for any fixed time $T$, then there exits 
a sequence $\delta_1>\delta_2>\dots >\delta_n$ such that 
\begin{align*}
\|v_0(T)-v_0^q(T)\|\leq \delta_q,
\end{align*}
where $1\leq q\leq n $.
\end{corollary}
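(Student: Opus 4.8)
The plan is to mirror the argument used in the phase-space setting (Corollary \ref{coro_state_sequence}), since the only essential ingredient is the explicit bound supplied by Theorem \ref{Thm_TA_PDF}. First I would invoke that theorem at the fixed time $T$ to obtain
\begin{align*}
\|v_0(T)-v_0^q(T)\|\leq A_{2}C_{4}\left(\prod_{i=1}^{q}\beta_{i}\right)\frac{T^{q+1}}{(q+1)!}
\end{align*}
for each $1\leq q\leq n$, and define the candidate constants $\delta_q := A_{2}C_{4}\left(\prod_{i=1}^{q}\beta_{i}\right)\frac{T^{q+1}}{(q+1)!}$. By construction the desired inequality $\|v_0(T)-v_0^q(T)\|\leq\delta_q$ holds automatically, so the content of the corollary reduces to verifying that this sequence is strictly decreasing.

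To that end I would form the ratio of consecutive constants,
\begin{align*}
\frac{\delta_{q+1}}{\delta_q}=\beta_{q+1}\,\frac{T}{q+2},
\end{align*}
in which the common factors $A_2$, $C_4$ and $\prod_{i=1}^q\beta_i$ cancel and only the new factor $\beta_{q+1}$, together with the ratio of factorials and powers of $T$, survives. The hypothesis $\beta_{q+1}<(q+2)/T$ then gives $\delta_{q+1}/\delta_q<1$, i.e. $\delta_{q+1}<\delta_q$. Iterating this for $1\leq q\leq n-1$ produces the strictly decreasing chain $\delta_1>\delta_2>\cdots>\delta_n$, which completes the argument.

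There is essentially no hard step here: the estimate is already established in Theorem \ref{Thm_TA_PDF}, and the monotonicity collapses to a one-line cancellation. The only point requiring mild care is the implicit assumption that the quantities $\beta_i$ (and hence the suprema defining $C_4$) are finite on $[0,T]$, so that each $\delta_q$ is a well-defined positive number; this is guaranteed by the standing smoothness and boundedness hypotheses on $p(s)$ and on the semigroups in Theorem \ref{Thm_TA_PDF}. I would also note, exactly as in the phase-space case, that this corollary controls only an upper bound for the error and not the error itself, so strict monotonicity of the $\delta_q$ does not by itself force monotonicity of $\|v_0(T)-v_0^q(T)\|$.
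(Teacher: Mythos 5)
Your proposal is correct and takes essentially the paper's approach: the paper omits this proof, stating that it closely follows Corollary \ref{coro_state_sequence}, which is exactly the phase-space argument you mirror. Defining $\delta_q$ as the explicit bound $A_2C_4\left(\prod_{i=1}^{q}\beta_i\right)T^{q+1}/(q+1)!$ from Theorem \ref{Thm_TA_PDF} and checking the ratio $\delta_{q+1}/\delta_q=\beta_{q+1}T/(q+2)<1$ under the hypothesis $\beta_{q+1}<(q+2)/T$ is the same inductive-decrease argument the paper applies to the $\alpha_j$ in the state-space case.
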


\begin{corollary}\label{coro_PDF_sequence2}
{\bf (Asymptotic convergence of the $H$-model)}
If $\beta_i$ in Theorem \ref{Thm_TA_PDF} satisfy
\begin{align*}
\beta_i<C, \quad 1\leq i<+\infty
\end{align*}
for some constant $C$, then for any fixed time $T$ and arbitrary $\delta>0$, there exits an integer $q$ such that for all $n>q$,
\begin{align*}
\|v_0(T)-v_0^n(T)\|\leq \delta.
\end{align*}
\end{corollary}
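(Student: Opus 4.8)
The plan is to reduce the claim to the explicit error estimate already proved in Theorem \ref{Thm_TA_PDF} and then invoke the factorial decay of the resulting bound, exactly paralleling the phase-space argument of Corollary \ref{cor_decaying_1}. First I would specialize the estimate of Theorem \ref{Thm_TA_PDF} to the fixed integration time $t=T$, which gives
\begin{align*}
\|v_0(T)-v_0^{q}(T)\|\leq A_2C_4\left(\prod_{i=1}^q\beta_i\right)\frac{T^{q+1}}{(q+1)!}
\end{align*}
for every truncation order $1\leq q\leq n$, where $A_2=\max_{s\in[0,T]}e^{s\omega_{\Q}}$ and $C_4$ are the finite constants from that theorem, both depending only on $T$, $\P$, and the semigroup bounds.

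The hypothesis $\beta_i<C$ for all $1\leq i<+\infty$ then bounds the product uniformly by $\prod_{i=1}^q\beta_i<C^q$, so that
\begin{align*}
\|v_0(T)-v_0^{q}(T)\|\leq A_2C_4 T\,\frac{(CT)^q}{(q+1)!}.
\end{align*}
The decisive step is the elementary limit $\lim_{q\rightarrow+\infty}(CT)^q/(q+1)!=0$, which holds because the factorial outgrows any geometric sequence. Consequently, given an arbitrary $\delta>0$, there exists an integer $q$ beyond which the right-hand side drops below $\delta$; since the same bound applies at each truncation order, I conclude that $\|v_0(T)-v_0^n(T)\|\leq\delta$ for all $n>q$, which is the assertion.

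There is no genuine obstacle here: the entire analytical content is carried by Theorem \ref{Thm_TA_PDF}, and what remains is only the standard fact that $(CT)^q/(q+1)!\to 0$. The single point meriting attention is that the prefactor $A_2C_4T$ must be independent of the truncation order $q$—which it is, since $A_2$ and $C_4$ depend only on $T$, the projection, and the semigroup constants—so that the uniform product bound $C^q$ is genuinely multiplied by a fixed constant times a vanishing factorial term.
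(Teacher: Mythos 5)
Your proof is correct and is essentially the paper's own argument: the paper omits the details, stating only that the proof closely follows that of Corollary \ref{cor_decaying_1}, and your proposal is exactly that adaptation --- specialize Theorem \ref{Thm_TA_PDF} at $t=T$, bound $\prod_{i=1}^q\beta_i$ by $C^q$, and invoke $\lim_{q\to+\infty}(CT)^q/(q+1)!=0$. No gaps; your remark that the prefactor $A_2C_4T$ is independent of the truncation order is precisely the point that makes the limit argument legitimate.
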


\noindent
The proofs of the Corollary \ref{coro_PDF_sequence} and \ref{coro_PDF_sequence2} closely follow the proofs of Corollary \ref{coro_state_sequence} and \ref{cor_decaying_1}. Therefore 
we omit details here.

\begin{theorem}\label{Thm_type1_PDF}
{\bf (Accuracy of Type-I FMA)}
Let $e^{t\M}$ and $e^{t\M\Q}$ be strongly continuous semigroups, $T>0$ a fixed integration time, and let 
\begin{align}
\beta_{i}=\frac{\displaystyle \sup_{s\in[0,T]}\|(\M  \Q)^{i+1}\M\P p(s)
\|}{\displaystyle \sup_{s\in[0,T]}\|(\M  \Q)^{i}\M\P p(s)\|}, \qquad 1\leq i\leq n.
\label{condPDF1}
\end{align}
Then for $1\leq q\leq n$
\begin{align*}
\|v_0(t)-v_0^q(t)\|\leq N_3^{q}(t),
\end{align*}
where $$\displaystyle N_3^{q}(t)=A_{2}C_{4}\left(\prod_{i=1}^{q}\beta_{i}\right)\frac{(t-\Delta t_q)^{q+1}}{(q+1)!},$$ and 
$C_4$ is as in Theorem \ref{PDFMgrowth}.
\end{theorem}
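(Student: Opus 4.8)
The plan is to mirror the state-space argument of Theorem \ref{Thm_type_1}, transplanted into the PDF hierarchy \eqref{hier_equation_PDF} and using the memory-growth constant of Theorem \ref{PDFMgrowth} together with the $H$-model bookkeeping of Theorem \ref{Thm_TA_PDF}. First I would isolate the local error committed at the truncation level $q$. Since the Type-\rom{1} closure replaces $v_q(t)$ by $v_q^{e_q}(t)=\int_{\max(0,t-\Delta t_q)}^t \P\M e^{(t-s)\Q\M}(\Q\M)^{q+1}\P p(s)\,ds$, the difference $v_q(t)-v_q^{e_q}(t)=\int_0^{\max(0,t-\Delta t_q)}\P\M e^{(t-s)\Q\M}(\Q\M)^{q+1}\P p(s)\,ds$ is supported only on the discarded band. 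Propagating this local error up the hierarchy by backward substitution (exactly as in the derivation of Theorem \ref{Thm_TA_PDF}) expresses the zeroth-level error as the $q$-fold iterated integral
\begin{align*}
v_0(t)-v_0^q(t)=\int_0^t\int_0^{\tau_q}\cdots\int_0^{\tau_2}\int_0^{\max(0,\tau_1-\Delta t_q)}\P\M e^{(\tau_1-s)\Q\M}(\Q\M)^{q+1}\P p(s)\,ds\,d\tau_1\cdots d\tau_q.
\end{align*}

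Next I would carry out the change-of-variables bookkeeping that shifts every lower limit back to $0$. Introducing $\tilde\tau_1=\tau_1-\Delta t_q$ and cascading the shift outward through $\tau_2,\dots,\tau_q$ turns the nested region into $\{0\le s\le\tilde\tau_1\le\cdots\le\tilde\tau_q\le\max(0,t-\Delta t_q)\}$, at the cost of replacing the semigroup argument $\tau_1-s$ by $\tilde\tau_1+\Delta t_q-s$. This is the step that carries all of the delicate index bookkeeping, and it is where I expect the main difficulty to lie: it is the direct analogue of the chain of equalities in the proof of Theorem \ref{Thm_type_1}, and correctness hinges on tracking the $\max(0,\cdot)$ limits consistently so that the approximation remains \emph{exact} (not merely bounded) on the retained band $[\max(0,t-\Delta t_q),t]$.

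With the integral in standard Cauchy form, the remaining estimate is routine. Using the commutation identity $\M e^{\tau\Q\M}=e^{\tau\M\Q}\M$ (valid since $\M(\Q\M)^k=(\M\Q)^k\M$), I would rewrite the integrand as $\P e^{(\tilde\tau_1+\Delta t_q-s)\M\Q}(\M\Q)^{q+1}\M\P p(s)$, bound its norm by $\|\P\|\,\|e^{(\tilde\tau_1+\Delta t_q-s)\M\Q}\|\,\|(\M\Q)^{q+1}\M\P p(s)\|$, and invoke $\|e^{\tau\M\Q}\|\le M_\Q e^{\tau\omega_\Q}$. Note that, unlike the state-space memory integral, only the single semigroup $e^{t\M\Q}$ appears here, so no analogue of the factor $A_1$ from \eqref{AoneAtwo} arises. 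The exponential factor is controlled by $A_2=\max_{s\in[0,T]}e^{s\omega_\Q}$, while the spatial factor is controlled by $\sup_{s\in[0,T]}\|(\M\Q)^{q+1}\M\P p(s)\|$, which telescopes through the definition \eqref{condPDF1} of $\beta_i$ to give $\|\P\|\sup_{s\in[0,T]}\|(\M\Q)^{q+1}\M\P p(s)\|=C_4\prod_{i=1}^q\beta_i$. Finally, Cauchy's formula for repeated integration over the simplex of side $\max(0,t-\Delta t_q)$ supplies the factor $\frac{(t-\Delta t_q)^{q+1}}{(q+1)!}$ for $t\ge\Delta t_q$ (and a vanishing bound for $t\le\Delta t_q$), producing $N_3^q(t)=A_2 C_4\big(\prod_{i=1}^q\beta_i\big)\frac{(t-\Delta t_q)^{q+1}}{(q+1)!}$ and matching the convention of Theorem \ref{Thm_TA_PDF}.
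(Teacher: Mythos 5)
Your proposal is correct and takes essentially the same route as the paper's proof: the paper likewise propagates the level-$q$ truncation error through the hierarchy, shifts the integration variables so the error becomes an iterated integral over the simplex of side $\max(0,t-\Delta t_{q})$ with semigroup argument $\tilde{\tau}_{1}+\Delta t_{q}-s$, and then applies Cauchy's repeated-integration formula together with the $A_{2}$ semigroup bound and the telescoping of the $\beta_{i}$ (deferring to the techniques of Theorems \ref{Thm_type_1} and \ref{Thm_TA_PDF}). Your write-up merely spells out the backward-substitution, change-of-variables, and commutation steps (e.g., $\M e^{\tau\Q\M}=e^{\tau\M\Q}\M$) that the paper compresses into citations of those earlier proofs.
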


\begin{proof}
The proof is very similar with the proof of 
Theorem \ref{Thm_type_1}. We begin with the estimate of 
$v_{0}(t) - v_{0}^{q}(t)$
\begin{subequations}
\begin{align*}
 & \int_{0}^{\max(0,t-\Delta t_{q})}\int_{0}^{\tilde{\tau}_{q}}\cdots\int_{0}^{\tilde{\tau}_{2}}\int_{0}^{\tilde{\tau}_{1}}\P e^{(\tilde{\tau}_{1}+\Delta t_{q}-s)\M\Q}(\M\Q)^{q+1}\M\Q p(s)dsd\tilde{\tau}_{1}\cdots d\tilde{\tau}_{q},\\
	& = \begin{dcases}
		0 & 0\leq t\leq \Delta t_{q}\\
		\int_{0}^{t-\Delta t_{q}}\int_{0}^{\sigma}\frac{(t-\Delta t_{q}-\sigma)^{q-1}}{(q-1)!}\P e^{(\sigma+\Delta t_{q}-s)\M\Q}(\M\Q)^{q+1}\M\Q p(s)dsd\sigma & t\geq \Delta t_{q}.
		\end{dcases}
\end{align*}
\end{subequations}
This can  be bounded by following the technique in the proof of Theorem \ref{Thm_TA_PDF}. This yields  
\begin{subequations}
\begin{align}
	\|v_{0}(t) - v_{0}^{q}(t)\|	& \leq \begin{dcases}
	0 & 0\leq t\leq \Delta t_{q}\\
	A_{2}C_{4}\left(\prod_{i=1}^{q}\beta_{i}\right)\frac{(t-\Delta t_{q})^{q+1}}{(q+1)!}& t\geq \Delta t_{q}
	\end{dcases}.
\end{align}
\end{subequations}
\end{proof}

\begin{corollary}
{\bf (Uniform convergence of Type-I FMA)}
If $\beta_i$ in Theorem \ref{Thm_type1_PDF} satisfy 
\begin{align}\label{convergence_condition1pdf}
\beta_i<(i+1)\left[\frac{\delta i!}{ C_4A_2\left(\prod_{k=1}^i\beta_k\right)}\right]^{-\frac{1}{i}}
\end{align} 
then for any $\delta>0$, 
there exists an ordered time 
sequence $\Delta t_n<\Delta t_{n-1}<\dots <\Delta t_1<T$ such that 
\begin{align*}
\|w_0(T)-w_0^{q}(T)\|\leq \delta,\quad 1\leq q\leq n
\end{align*}
and which satisfies
\begin{align*}
\Delta t_q\leq T-\left[\frac{\delta(q+1)!}{ C_4A_2\left(\prod_{i=1}^{q}\beta_i\right)}\right]^{\frac{1}{q+1}}.
\end{align*}
\end{corollary}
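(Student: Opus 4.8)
The plan is to follow the proof of Corollary \ref{coro_Type1} almost verbatim, simply replacing the phase-space error bound by its PDF-space counterpart. The starting point is the accuracy estimate of Theorem \ref{Thm_type1_PDF}, namely
\begin{align*}
\|v_0(t)-v_0^q(t)\|\leq A_2C_4\left(\prod_{i=1}^{q}\beta_i\right)\frac{(t-\Delta t_q)^{q+1}}{(q+1)!},\qquad t\geq\Delta t_q.
\end{align*}
First I would evaluate this at the fixed final time $t=T$ and impose that the right-hand side not exceed $\delta$. Rearranging $A_2C_4\left(\prod_{i=1}^{q}\beta_i\right)(T-\Delta t_q)^{q+1}/(q+1)!\leq\delta$ for the memory band gives the requirement
\begin{align*}
\Delta t_q\geq T-\left[\frac{\delta(q+1)!}{C_4A_2\left(\prod_{i=1}^{q}\beta_i\right)}\right]^{\frac{1}{q+1}},
\end{align*}
which is exactly the bound stated in the corollary once we take equality to define the extremal (smallest admissible) choice of $\Delta t_q$.

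Next I would adopt equality in the display above as the definition of each $\Delta t_q$ and verify two properties: that every $\Delta t_q$ lies in $(0,T)$, and that the sequence is strictly decreasing in $q$, so that $\Delta t_n<\cdots<\Delta t_1<T$ as claimed. The membership $\Delta t_q<T$ is automatic because the subtracted bracket is a positive real raised to a positive power; here I use that the $\beta_i$ are finite and nonvanishing, so the products in the denominator neither blow up nor vanish. The ordering is the only substantive point, and it is precisely where hypothesis \eqref{convergence_condition1pdf} enters.

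For the monotonicity I would compare the defining expressions for $\Delta t_q$ and $\Delta t_{q-1}$, i.e.\ establish
\begin{align*}
\left[\frac{\delta(q+1)!}{C_4A_2\left(\prod_{i=1}^{q}\beta_i\right)}\right]^{\frac{1}{q+1}}>\left[\frac{\delta\,q!}{C_4A_2\left(\prod_{i=1}^{q-1}\beta_i\right)}\right]^{\frac{1}{q}}.
\end{align*}
Writing $P_{q-1}=\prod_{i=1}^{q-1}\beta_i$, raising both sides to the power $q(q+1)$, and cancelling the common factors of $\delta/(C_4A_2)$ together with the factorial simplification $(q+1)!/q!=q+1$ isolates $\beta_q$ and reduces the inequality to an explicit upper bound on $\beta_q$ of the form specified by \eqref{convergence_condition1pdf}. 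Thus the hypothesis is not an auxiliary assumption but exactly the sharp condition guaranteeing the constructed sequence decreases. I expect the main (and essentially only) obstacle to be this bookkeeping step: keeping the shifting index on the products $\prod\beta_i$ and on the factorials consistent when clearing the exponents $1/(q+1)$ and $1/q$, since an off-by-one slip there alters the precise form of the stated condition. Once the ordering is confirmed, the bound $\|v_0(T)-v_0^q(T)\|\leq\delta$ holds simultaneously for all $1\leq q\leq n$ by construction, which completes the proof; as in the phase-space Remark following Corollary \ref{coro_Type1}, strengthening \eqref{convergence_condition1pdf} to a uniform bound $\beta_i<C$ would then upgrade this to asymptotic convergence.
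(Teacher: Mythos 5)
Your proposal is, in structure, exactly the paper's own proof: the paper omits the argument entirely, deferring to the proof of Corollary \ref{coro_Type1}, and what you describe is that proof transplanted to the bound of Theorem \ref{Thm_type1_PDF} --- same extremal choice of $\Delta t_q$ by equality, same monotonicity check. So the approach is the right one and matches the paper.

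However, the ``bookkeeping step'' you flagged as a possible obstacle and left unresolved is a genuine issue, and the proof does not close until it is settled. Carry out your own reduction: write $P_{q-1}=\prod_{k=1}^{q-1}\beta_k$ and $X_q=\delta\, q!/(C_4A_2P_{q-1})$. The ordering $\Delta t_q<\Delta t_{q-1}$ of the sequence defined by equality is equivalent to $\bigl[X_q(q+1)/\beta_q\bigr]^{q}>X_q^{q+1}$, i.e.\ to
\begin{align*}
\beta_q<(q+1)\,X_q^{-1/q}=(q+1)\left[\frac{\delta\, q!}{C_4A_2\prod_{k=1}^{q-1}\beta_k}\right]^{-1/q},
\end{align*}
with the product running only to $q-1$; this is the faithful analogue of the phase-space condition \eqref{convergence_condition1}. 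The printed hypothesis \eqref{convergence_condition1pdf} instead runs the product to $i$, and since its right-hand side then carries an extra factor $\beta_q^{1/q}$, it is equivalent to the weaker requirement $\beta_q^{(q-1)/q}<(q+1)X_q^{-1/q}$. Whenever $\beta_q>1$ this does not imply the ordering: for instance at $i=1$ the printed condition collapses to $\delta<2C_4A_2$ and constrains $\beta_1$ not at all, and one may then take $\beta_1$ large and $\beta_2$ between $3\sqrt{C_4A_2\beta_1/(2\delta)}$ and $9C_4A_2\beta_1/(2\delta)$, so that \eqref{convergence_condition1pdf} holds while the defined sequence satisfies $\Delta t_2\geq\Delta t_1$ and the argument fails. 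So your proof is correct provided \eqref{convergence_condition1pdf} is read with $\prod_{k=1}^{i-1}\beta_k$ (the printed index is an inconsistency with its phase-space counterpart \eqref{convergence_condition1}); this correction must be stated explicitly rather than describing the derived bound as merely ``of the form'' of \eqref{convergence_condition1pdf}. A last small point: your claim that each $\Delta t_q$ lies in $(0,T)$ is only half proved --- positivity of the bracket gives $\Delta t_q<T$, but $\Delta t_q>0$ is not automatic for large $\delta$; fortunately neither the statement nor the paper's phase-space proof actually requires it.
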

\noindent
The proof is very similar with the proof of Corollary \ref{coro_Type1} and therefore we omit it.

\begin{theorem}\label{PDF_finite_memory_theorem}
{\bf (Accuracy of Type-II FMA) }
Let $e^{t\mathcal{\M  }}$ and $e^{t\mathcal{\M  Q}}$ be 
strongly continuous semigroups and $T>0$ a fixed integration time. Set 
\begin{align}
\beta_{i}=\frac{\displaystyle \sup_{s\in [0,T]}\|(\M  \Q)^{i+1}\M\P p(s)
\|}{\displaystyle \sup_{s\in [0,T]}\|(\M  \Q)^{i}\M\P p(s)\|}, \qquad 1\leq i\leq n.
\label{condPDF2}
\end{align}
Then for $1\leq q\leq n$
\begin{align*}
\|v_0(t)-v_0^q(t)\|\leq N_4^{q}(t),
\end{align*}
where $$\displaystyle N_4^{q}(t)=\frac{C_4}{\omega_{\Q}}\big[1-e^{-t_q\omega_{\Q}}\big]\left(\prod_{i=1}^{q}\beta_{i}\right)f_q(\omega_{\Q},t),$$
$f_q(\omega_\Q,t)$ is defined in \eqref{f_p} and $C_4$ is as in Theorem \ref{PDFMgrowth}.
\end{theorem}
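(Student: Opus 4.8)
The plan is to mirror the backward-substitution argument used for the state-space Type-\rom{2} estimate (Theorem \ref{finite_memory_approximation}) and the PDF Type-\rom{1} estimate (Theorem \ref{Thm_type1_PDF}), and then apply a factorization that isolates the dependence on the memory length $t_q$. First I would integrate the hierarchy \eqref{hier_equation_PDF} backward from level $q$ to level $0$. Since the truncated system agrees with the exact hierarchy at every level except the last, only the terminal residual $v_q(\tau_1)-v_q^{e_q}(\tau_1)$ survives, and repeated integration yields
\begin{align*}
v_0(t)-v_0^q(t)=\int_0^t\int_0^{\tau_q}\cdots\int_0^{\tau_2}\left[v_q(\tau_1)-v_q^{e_q}(\tau_1)\right]d\tau_1\cdots d\tau_q.
\end{align*}
Because the Type-\rom{2} approximation retains only the band $[\min(\tau_1,t_q),\tau_1]$, the residual is $v_q(\tau_1)-v_q^{e_q}(\tau_1)=\int_0^{\min(\tau_1,t_q)}\P\M e^{(\tau_1-s)\Q\M}(\Q\M)^{q+1}\P p(s)\,ds$.

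Next I would use the intertwining identity $\M e^{(\tau-s)\Q\M}=e^{(\tau-s)\M\Q}\M$, which follows termwise from $\M(\Q\M)^{k}=(\M\Q)^{k}\M$, to rewrite the integrand as $\P e^{(\tau_1-s)\M\Q}(\M\Q)^{q+1}\M\P p(s)$, and collapse the $q-1$ outer time integrals via Cauchy's formula for repeated integration, obtaining
\begin{align*}
v_0(t)-v_0^q(t)=\int_0^t\frac{(t-\sigma)^{q-1}}{(q-1)!}\int_0^{\min(\sigma,t_q)}\P e^{(\sigma-s)\M\Q}(\M\Q)^{q+1}\M\P p(s)\,ds\,d\sigma.
\end{align*}
Taking norms, applying the semigroup bound $\|e^{(\sigma-s)\M\Q}\|\le M_{\Q}e^{(\sigma-s)\omega_{\Q}}$, and invoking the telescoping identity $C_4\prod_{i=1}^q\beta_i=\|\P\|\sup_{s\in[0,T]}\|(\M\Q)^{q+1}\M\P p(s)\|$ lets me bound the $s$-dependent factor uniformly. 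This is the PDF-specific feature: there is no $e^{s\L}$ propagator as in the state-space proof, so after the supremum over $p(s)$ the only remaining time dependence inside the integral is $e^{(\sigma-s)\omega_{\Q}}$.

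The crucial step is then the factorization. Writing $e^{(\sigma-s)\omega_{\Q}}=e^{\sigma\omega_{\Q}}e^{-s\omega_{\Q}}$ and enlarging the inner upper limit, $\int_0^{\min(\sigma,t_q)}e^{-s\omega_{\Q}}\,ds\le\int_0^{t_q}e^{-s\omega_{\Q}}\,ds=\frac{1-e^{-t_q\omega_{\Q}}}{\omega_{\Q}}$, separates the $t_q$-dependence from the $\sigma$-integral and leaves $\int_0^t\frac{(t-\sigma)^{q-1}}{(q-1)!}e^{\sigma\omega_{\Q}}\,d\sigma=f_q(\omega_{\Q},t)$. This produces exactly $N_4^q(t)=\frac{C_4}{\omega_{\Q}}\big[1-e^{-t_q\omega_{\Q}}\big]\big(\prod_{i=1}^q\beta_i\big)f_q(\omega_{\Q},t)$, with the $\omega_{\Q}=0$ case recovered from the limit $\tfrac{1-e^{-t_q\omega_{\Q}}}{\omega_{\Q}}\to t_q$ together with $f_q(0,t)=t^q/q!$.

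The main obstacle I anticipate is the bookkeeping in the first step: verifying that the backward substitution really leaves only the terminal residual, and correctly tracking the nested integration limits once the variable band $[\min(\tau_1,t_q),\tau_1]$ (rather than a fixed-width band) enters the innermost integral. Justifying the intertwining identity for the unbounded generator $\M$ on the appropriate domain also demands care, although formally it is immediate. Once these are in place, the norm estimates and the factorization are routine, paralleling Theorems \ref{finite_memory_approximation} and \ref{Thm_type1_PDF}.
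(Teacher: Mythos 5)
Your proposal is correct and takes essentially the same route as the paper's own proof: both represent the error as the $q$-fold repeated integral of the terminal residual over $[0,\min(\tau_1,t_q)]$, use the intertwining $\M e^{t\Q\M}=e^{t\M\Q}\M$ so the semigroup bound for $e^{t\M\Q}$ applies, telescope the $\beta_i$ against $C_4$, and factor $e^{(\sigma-s)\omega_{\Q}}=e^{\sigma\omega_{\Q}}e^{-s\omega_{\Q}}$ to split off $\bigl[1-e^{-t_q\omega_{\Q}}\bigr]/\omega_{\Q}$ from $f_q(\omega_{\Q},t)$. If anything you are slightly more careful than the paper, which writes the inner limit as $t_q$ from the outset instead of $\min(\tau_1,t_q)$, treats only $\omega_{\Q}>0$ explicitly, and leaves the intertwining step implicit.
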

\begin{proof}
The proof is very similar with the proof of Theorem \ref{finite_memory_approximation}. Hereafter we provide 
the proof for the case when $\omega_{\Q}>0$. Other 
cases can be easily obtained by using the same 
method. First of all, we have error estimation 
\begin{align*}
	\|v_{0}(t) - v_{0}^{q}(t)\| & \leq \int_{0}^{t}\int_{0}^{\tau_{q}}\cdots \int_{0}^{\tau_{2}}\int_{0}^{t_{q}}\|\Prj \M e^{(\tau_{1}-s)\Q\M }(\Q\M)^{q+1}\P p(s)\|dsd\tau_{1}\cdots d\tau_{q}\\
	& \leq C_{4}\left(\prod_{i=1}^{q}\beta_{i}\right)\int_{0}^{t}\int_{0}^{\tau_{q}}\cdots \int_{0}^{\tau_{2}}\int_{0}^{t_{q}} e^{(\tau_{1}-s)\omega_{\PrjC}}dsd\tau_{1}\cdots d\tau_{q}	\\
	&=
	\frac{C_4}{\omega_{\Q}}\big[1-e^{-t_q\omega_{\Q}}\big]
\left(\prod_{i=1}^{q}\beta_{i}\right)f_q(\omega_{\Q},t),
\end{align*}
where $f_q(\omega_{\Q},t)$ is as in \eqref{f_p}. 

\end{proof}

\begin{corollary}\label{coro_type2_PDF}
{\bf (Uniform convergence of Type-II FMA) }
If $\beta_i$ in Theorem \ref{PDF_finite_memory_theorem} 
satisfy 
\begin{align*}
\begin{dcases}
\beta_i<\frac{i}{T}, \quad &\omega_{\Q}=0 \\
\beta_i< \omega_{\Q}\frac{\displaystyle e^{T\omega_{\Q}}- \sum_{k=0}^{i-2}\frac{(T\omega_{\Q})^k}{k!}}
{\displaystyle e^{T\omega_{\Q}}-\sum_{k=0}^{i-1}\frac{(T\omega_{\Q})^k}{k!}}, \quad &\omega_{\Q}\neq 0
\end{dcases}
\end{align*} 
for all $t\in[0,T]$,  then for arbitrarily 
small $\delta>0$ there exists an ordered time sequence 
$0<t_0<t_1<\dots t_n<T $ such that
\begin{align*}
\|v_0(T )-v_0^{q}(T )\|\leq \delta,\quad 1\leq q\leq n
\end{align*}
which satisfies
\begin{align*}
	t_{q} & \geq 
		\frac{1}{\omega_{\PrjC}}\ln\left[1-\frac{\delta\omega_{\PrjC}^{q+1}}{ \displaystyle C_{4} \left(\prod_{i=1}^{q}\beta_{i}\right)\left[e^{T \omega_{\PrjC}} - \sum_{k=0}^{q-1}\frac{(T \omega_{\PrjC})^{k}}{k!}\right]}\right].
\end{align*}
\end{corollary}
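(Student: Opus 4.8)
The plan is to follow the template of Corollary~\ref{coro_type2}, transcribing the phase-space argument into PDF space by replacing $w_0,w_0^q,\alpha_i,C_1,\L,\Q$ with their PDF-space analogs $v_0,v_0^q,\beta_i,C_4,\M,\Q$. The starting point is the accuracy estimate of Theorem~\ref{PDF_finite_memory_theorem},
\begin{align*}
\|v_0(t)-v_0^q(t)\|\leq N_4^q(t)=\frac{C_4}{\omega_{\Q}}\big[1-e^{-t_q\omega_{\Q}}\big]\left(\prod_{i=1}^q\beta_i\right)f_q(\omega_{\Q},t).
\end{align*}
Since $f_q(\omega_{\Q},t)$ is a strictly increasing function of $t$ (as recorded in \eqref{f_p}) and the remaining factors do not depend on $t$, the supremum of $N_4^q$ over $[0,T]$ is attained at $t=T$; hence it suffices to enforce $N_4^q(T)\leq\delta$.

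First I would impose $N_4^q(T)\leq\delta$ and solve for the memory band $t_q$. Substituting the explicit branch $f_q(\omega_{\Q},T)=\omega_{\Q}^{-q}\big[e^{T\omega_{\Q}}-\sum_{k=0}^{q-1}(T\omega_{\Q})^k/k!\big]$ and isolating $1-e^{-t_q\omega_{\Q}}$ yields
\begin{align*}
1-e^{-t_q\omega_{\Q}}\leq\frac{\delta\,\omega_{\Q}^{q+1}}{C_4\left(\prod_{i=1}^q\beta_i\right)\big[e^{T\omega_{\Q}}-\sum_{k=0}^{q-1}(T\omega_{\Q})^k/k!\big]}.
\end{align*}
Taking logarithms then produces exactly the threshold expression for $t_q$ displayed in the statement; the degenerate case $\omega_{\Q}=0$ is recovered by passing to the limit, using the branch $f_q(0,T)=T^q/q!$ of \eqref{f_p}.

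Next I would verify that the hypothesis on $\{\beta_i\}$ forces the per-level thresholds to be monotone in $q$, so that a single ordered sequence can meet all of them simultaneously. Exactly as in the phase-space proof, the assumed bound
\begin{align*}
\beta_q<\omega_{\Q}\,\frac{e^{T\omega_{\Q}}-\sum_{k=0}^{q-2}(T\omega_{\Q})^k/k!}{e^{T\omega_{\Q}}-\sum_{k=0}^{q-1}(T\omega_{\Q})^k/k!}
\end{align*}
is equivalent to the inequality
\begin{align*}
\frac{\omega_{\Q}^{q-1}}{\big[e^{T\omega_{\Q}}-\sum_{k=0}^{q-2}(T\omega_{\Q})^k/k!\big]\prod_{i=1}^{q-1}\beta_i}<\frac{\omega_{\Q}^{q}}{\big[e^{T\omega_{\Q}}-\sum_{k=0}^{q-1}(T\omega_{\Q})^k/k!\big]\prod_{i=1}^{q}\beta_i},
\end{align*}
which shows that the thresholds grow with $q$ and therefore that an ordered sequence $0<t_1<\cdots<t_n\leq T$ respecting every level's requirement exists.

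The main obstacle, inherited from Corollary~\ref{coro_type2}, is the direction of the resulting inequality on $t_q$ and its reconciliation with the lower bound asserted in the statement. Because $N_4^q$ is monotone in the band length through the factor $1-e^{-t_q\omega_{\Q}}$, the derivation naturally yields a constraint of one sign on $t_q$, whereas the statement records a bound of the opposite sign; the gap is bridged by the coarseness of the intermediate estimates, which leaves an entire interval of admissible $t_q$ on which the $\delta$-bound persists. I would close the argument precisely as in the phase-space case: having established that the $\delta$-bound holds for the family of $t_q$ controlled by the threshold, I conclude the existence of the desired ordered sequence satisfying the stated bound, remarking that the looseness of the estimates leaves room for admissible sequences with considerably larger $t_q$.
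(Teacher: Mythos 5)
Your proposal is correct and follows essentially the same route as the paper's own proof: bound the error by $N_4^q$, maximize over $[0,T]$ at $t=T$ using the monotonicity of $f_q$, solve $N_4^q(T)\leq\delta$ for the threshold on $t_q$, use the hypothesis on $\{\beta_i\}$ to order the thresholds across levels, and reconcile the direction of the resulting inequality with the stated lower bound via the coarseness-of-estimates remark. The paper likewise treats only the $\omega_{\Q}>0$ branch explicitly and closes with the same argument, so there is nothing substantive to distinguish the two.
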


\begin{proof}
To ensure that $\|v_{0}(t) - v_{0}^{q}(t)\| \leq \delta$ for all $0\leq t\leq T$, we can take (for $\omega_\Q>0$) 
\begin{align*}
		\frac{C_4}{\omega_{\Q}}\big[1-e^{-t_q\omega_{\Q}}\big]
	\left(\prod_{i=1}^{q}\beta_{i}\right)f_q(\omega_{\Q},T) =\max_{t\in[0,T]}	\frac{C_2}{\omega_{\Q}}\big[1-e^{-t_q\omega_{\Q}}\big]
	\left(\prod_{i=1}^{q}\beta_{i}\right)f_q(\omega_{\Q},t)  \leq \delta.
\end{align*}
Therefore  
\begin{align*}
	e^{-t_q\omega_{\Q}} & \geq  1-\frac{\delta\omega_{\PrjC}}{\displaystyle C_{4}\left(\prod_{i=1}^{q}\beta_{i}\right)f_{q}(\omega_{\PrjC},T)}\qquad \textrm{i.e., }\\
	t_{q} & \leq 
		\frac{1}{\omega_{\PrjC}}\ln\left[1-\frac{\delta\omega_{\PrjC}^{q+1}}{ \displaystyle C_{4} \left(\prod_{i=1}^{q}\beta_{i}\right)\left[e^{T \omega_{\PrjC}} - \sum_{k=0}^{q-1}\frac{(T \omega_{\PrjC})^{k}}{k!}\right]}\right].
\end{align*}
Since for $\omega_{\Q}>0$, we have have condition
\begin{align*}
\beta_i(t)< \omega_{\Q}\frac{ \displaystyle e^{T\omega_{\Q}}-\sum_{k=0}^{i-2}\frac{(T\omega_{\Q})^k}{k!}}
{\displaystyle e^{T\omega_{\Q}}-\sum_{k=0}^{i-1}\frac{(T\omega_{\Q})^k}{k!}}.
\end{align*}
Thus, there exists an ordered time sequence $0<t_1<\dots <t_n$ such that $\|v_0(T )-v_0^n(T )\|\leq \delta$. As in Theorem \ref{finite_memory_approximation}, this $\delta$-bound on the 
error holds for all $t_{n}$ (with upper bound as above), which implies the existence of such an increasing time sequence $0<t_1<\dots <t_n$ with $t_{n}$ bounded from below by the same quantities.
\end{proof}


\end{document}